\documentclass{amsart}
\usepackage{amssymb}
\usepackage{float}
\usepackage[pdftex]{hyperref}
\newtheorem{thm}{Theorem}[section]
\newtheorem{lem}[thm]{Lemma}
\newtheorem{prop}[thm]{Proposition}
\newtheorem{cor}[thm]{Corollary}

\theoremstyle{definition}
\newtheorem{defn}[thm]{Definition}

\theoremstyle{remark}

\newcommand{\system}[1]{\mbox{\fontfamily{cmss}\fontshape{n}\fontseries{m}%
    \selectfont#1}}
\newcommand{\RCA}{\system{RCA}\ensuremath{_0}}
\newcommand{\WKL}{\system{WKL}\ensuremath{_0}}
\newcommand{\WWKL}{\system{WWKL}\ensuremath{_0}}

\newcommand{\ACA}{\system{ACA}\ensuremath{_0}}

\newcommand{\REC}{\system{REC}}

\newcommand{\DNR}{\system{DNR}\ensuremath{_0}}
\newcommand{\Gepsilon}{\system{G}\ensuremath{_{\delta}\system{-}\epsilon}}
\newcommand{\Positive}{\system{POS}}
\newcommand{\Gdelta}{\system{G}\ensuremath{_{\delta}}\system{-REG}}

\renewcommand{\epsilon}{\varepsilon}
\renewcommand{\phi}{\varphi}
\renewcommand{\models}{\vDash}
\newcommand{\notmodels}{\nvDash}

\newcommand{\dom}{\qopname\relax o{dom}}
\newcommand{\seq}[1]{{\langle{#1}\rangle}}

\newcommand{\Q}{\mathbb{Q}}
\newcommand{\N}{\mathbb{N}}

\newcommand{\LR}{{\emph{LR}}}
\newcommand{\LK}{{\emph{LK}}}
\newcommand{\cat}{\widehat{\;\;\:}}

\begin{document}

\title{Lowness notions, measure and domination}

\author{Bj{\o}rn Kjos-Hanssen}
\address{Bj{\o}rn Kjos-Hanssen, Department of Mathematics,
University of Hawai{\textquoteleft}i at M{\=a}noa, 
Honolulu, HI 96822}
\email{bjoern@math.hawaii.edu}

\author{Joseph S.~Miller}
\address{Joseph S.~Miller, Department of Mathematics,
University of Wisconsin, Madison, WI 53706-1388}
\email{jmiller@math.wisc.edu}

\author{Reed Solomon}
\address{Reed Solomon, Department of Mathematics,
University of Connecticut, Storrs, CT 06269-3009}
\email{solomon@math.uconn.edu}

\thanks{Solomon's research was partially funded by NSF Grant DMS-0400754. Miller's was supported by NSF grants DMS-0945187 and DMS-0946325, the latter being part of a Focused Research Group in Algorithmic Randomness. Kjos-Hanssen was supported by NSF Grants DMS-0901020 and DMS-0652669 (the latter part of the FRG in Algorithmic Randomness).}

\makeatletter
\@namedef{subjclassname@2010}{\textup{2010} Mathematics Subject Classification}
\makeatother
\subjclass[2010]{Primary 03D32, Secondary 68Q30, 03D28}

\begin{abstract}
We show that positive measure domination implies uniform almost everywhere domination and that this proof translates into a proof in the subsystem $\WWKL$ (but not in $\RCA$) of the equivalence of various Lebesgue measure regularity statements introduced by Dobrinen and Simpson. This work also allows us to prove that low for weak $2$-randomness is the same as low for Martin-L\"of randomness (a result independently obtained by Nies). Using the same technique, we show that $\leq_\LR$ implies $\leq_\LK$, generalizing the fact that low for Martin-L\"of randomness implies low for $K$.
\end{abstract}

\maketitle

\section{Introduction}
\label{sec:intro}

Dobrinen and Simpson \cite{dob:04} asked how difficult it is to prove, in the context of reverse mathematics, the following three 
statements about the Lebesgue measure $\mu$ on $2^{\omega}$.  (The reader who is not familiar with the project of reverse mathematics 
is referred to Simpson \cite{si:09} for an introduction to the subject.)
\begin{enumerate}
\item[(1)] $\Gdelta$:  For every $G_{\delta}$ set $P \subseteq 2^{\omega}$, there is an $F_{\sigma}$ set $Q \subseteq P$ such that 
$\mu(Q) = \mu(P)$.
\item[(2)] $\Gepsilon$:  For every $G_{\delta}$ set $P \subseteq 2^{\omega}$ and every $\epsilon > 0$, there is a closed set $F \subseteq P$ 
such that $\mu(F) \geq \mu(P) - \epsilon$.
\item[(3)] $\Positive$:  For every $G_{\delta}$ set $P \subseteq 2^{\omega}$ such that $\mu(P) > 0$, there is a closed set $F \subseteq P$ 
such that $\mu(F) > 0$. 
\end{enumerate}

It is straightforward to show that $\ACA$ proves all three statements, $\RCA \vdash \Gdelta \rightarrow \Gepsilon$ and 
$\RCA \vdash \Gepsilon \rightarrow \Positive$.  Dobrinen and Simpson introduced the notions of \emph{uniformly almost everywhere} (\emph{u.a.e.}) \emph{domination} and \emph{almost everywhere} (\emph{a.e.}) \emph{domination} and showed that these are the recursion theoretic counterparts of 
$\Gdelta$ and $\Gepsilon$.    

\begin{defn}[Dobrinen and Simpson \cite{dob:04}] 
A set $A\in 2^\omega$ is \emph{a.e.~dominating} 
if for almost all $X \in 2^{\omega}$ (with respect to the Lebesgue measure) and all functions $g \leq_T X$, there is a function 
$f \leq_T A$ such that $f$ dominates $g$ (that is, $\exists m \forall n > m \, (f(n) \geq g(n))$).  
$A \in 2^{\omega}$ is \emph{u.a.e.~dominating} if there is a single function $f \leq_T A$ 
such that for almost all $X \in 2^{\omega}$ and all functions $g \leq_T X$, $f$ dominates $g$.
\end{defn}

\begin{thm}[Dobrinen and Simpson \cite{dob:04}] 
\label{thm:gdeltaequiv}
The following are equivalent.
\begin{enumerate}
\item $A$ is u.a.e.~dominating.
\item For all $\Pi^0_2$ sets $P \subseteq 2^{\omega}$, there is a $\Sigma^A_2$ set $Q \subseteq P$ such that 
$\mu(Q) = \mu(P)$.
\end{enumerate}
\end{thm}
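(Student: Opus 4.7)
For $(1) \Rightarrow (2)$, write the given $\Pi^0_2$ set as $P = \bigcap_n U_n$ where the $U_n$ are uniformly c.e.~open, and assume without loss of generality that they are decreasing (replace $U_n$ with $\bigcap_{i \leq n} U_i$). For any $X$, define the entry-time function $g^X(n) = \min \{s : [X\restriction s] \subseteq U_n[s]\}$, which is recursive in $X$ and total exactly when $X \in P$. Let $f \leq_T A$ witness that $A$ is u.a.e.~dominating, and set
\[
Q \;=\; \bigcup_{m \in \omega}\, \bigcap_{n > m}\, V_n, \qquad V_n = \{X : [X\restriction f(n)] \subseteq U_n[f(n)]\}.
\]
Each $V_n$ is clopen and uniformly $A$-computable, so $Q$ is $\Sigma^A_2$; because the $U_n$ are decreasing, $V_n \subseteq U_n$ forces $Q \subseteq P$; and since $f$ dominates $g^X$ for almost every $X$, each such $X \in P$ lies in some tail $\bigcap_{n > m} V_n$, giving $\mu(Q) = \mu(P)$.

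For $(2) \Rightarrow (1)$, apply hypothesis (2) to each $\Pi^0_2$ set $P_e = \{X : \Phi_e^X \text{ is total}\}$ to obtain $Q_e \subseteq P_e$ that is $\Sigma^A_2$ with $\mu(Q_e) = \mu(P_e)$. Writing $Q_e = \bigcup_m [T_{e,m}]$ for $A$-computable trees $T_{e,m} \subseteq 2^{<\omega}$, compactness of $[T_{e,m}] \subseteq P_e$ supplies, for each $n$, a least stage $h_{e,m}(n)$ such that $\Phi_e^\sigma(n)$ converges for every $\sigma \in T_{e,m}$ of length $h_{e,m}(n)$; this $h_{e,m}$ is recursive in $A$, uniformly in $e,m$. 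Put $f(n) = \max\{h_{e,m}(n) : e, m \leq n\}$, which is $\leq_T A$. For any $X$ avoiding the nullset $\bigcup_e (P_e \setminus Q_e)$, every total $\Phi_e^X$ forces $X \in [T_{e,m}]$ for some $m$, whence $f(n) \geq h_{e,m}(n) \geq \Phi_e^X(n)$ for all $n \geq \max(e,m)$. Since every $g \leq_T X$ is some $\Phi_e^X$, the function $f$ witnesses u.a.e.~domination by $A$.

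The shared technical heart of both directions is the use of compactness to convert a pointwise existential condition (membership in a c.e.~open set, or convergence of a computation on a specific oracle) into a uniform $A$-computable bound: forward, via the clopen approximation $V_n$; backward, via the uniform stage $h_{e,m}(n)$. The main obstacle in writing this cleanly is the bookkeeping in the backward direction, where one must check that the countably many nullsets $P_e \setminus Q_e$ assemble to a single nullset, and that the diagonal $f(n) = \max_{e,m \leq n} h_{e,m}(n)$ genuinely dominates every total $\Phi_e^X$ from some stage onward. The dependence of $h_{e,m}$ on the witness $m$, which has no a priori bound in terms of $X$, is precisely what forces the ``max over bounded pairs'' construction of $f$.
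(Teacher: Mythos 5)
The paper states this theorem as a citation to Dobrinen and Simpson and gives no proof, so there is nothing internal to compare against; your argument is the standard one and is essentially correct in both directions. The forward direction is clean: the entry-time function $g^X(n)$ is monotone in the sense that $[X\restriction s]\subseteq U_n[s]$ persists for all $s\geq g^X(n)$, so $f(n)\geq g^X(n)$ really does put $X$ into $V_n$, and the decreasingness of the $U_n$ gives $Q\subseteq P$.

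There is one small but genuine slip in the backward direction. You define $h_{e,m}(n)$ as the least level $\ell$ such that $\Phi_e^\sigma(n)$ \emph{converges} for every $\sigma\in T_{e,m}$ of length $\ell$, and then assert $h_{e,m}(n)\geq\Phi_e^X(n)$. A stage (or use) bound witnessing convergence does not bound the \emph{output} of the computation, so that inequality does not follow from your definition. The fix is routine: having found the level $\ell$ by compactness, set $h_{e,m}(n)=\max\{\Phi_e^\sigma(n):\sigma\in T_{e,m},\ |\sigma|=\ell\}$ (this max is over a finite, $A$-computably determined set, so $h_{e,m}$ remains uniformly $A$-recursive), and then $f(n)=\max_{e,m\leq n}h_{e,m}(n)$ dominates $\Phi_e^X$ as you claim. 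With that repair the proof is complete; the compactness step itself is sound, since $\{\sigma\in T_{e,m}:\Phi_e^\sigma(n)\uparrow\}$ is a subtree and an infinite subtree would yield a path of $T_{e,m}$ outside $P_e$.
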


\begin{thm}[Dobrinen and Simpson \cite{dob:04}] 
\label{thm:aeequiv}
The following are equivalent.
\begin{enumerate}
\item $A$ is a.e.~dominating.
\item For all $\Pi^0_2$ sets $P \subseteq 2^{\omega}$ and all $\epsilon > 0$, there is a $\Pi^A_1$ set $F \subseteq P$ 
such that $\mu(F) \geq \mu(P) - \epsilon$.
\end{enumerate}
\end{thm}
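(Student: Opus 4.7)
For $(1)\Rightarrow(2)$, write $P=\bigcap_n U_n$ with $(U_n)$ a uniformly c.e.\ decreasing sequence of open sets, and for each $X\in P$ define $g_X(n)$ to be the least stage $s$ at which some prefix of $X$ is enumerated into $U_n$; then $g_X\leq_T X$. For each total $f\leq_T A$ and each $m\in\N$ set
\[
A_{f,m}=\bigcap_{n\geq m}\{X: X\in U_n[f(n)]\},
\]
where $U_n[s]$ denotes the clopen set enumerated into $U_n$ by stage $s$. Each $A_{f,m}$ is a $\Pi^A_1$ subset of $P$. By hypothesis $(1)$, for almost every $X$ the function $g_X$ is eventually dominated by some $f\leq_T A$, so $\bigcup_{f,m}A_{f,m}$ has measure $\mu(P)$; countable additivity then yields a finite subunion of measure $\geq\mu(P)-\epsilon$, which remains $\Pi^A_1$.

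For $(2)\Rightarrow(1)$, apply $(2)$ to each $\Pi^0_2$ set $S_e=\{X:\phi^X_e\text{ is total}\}$ with tolerance $\epsilon\cdot 2^{-e-1}$ to obtain $\Pi^A_1$ sets $F_e\subseteq S_e$ of measure at least $\mu(S_e)-\epsilon\cdot 2^{-e-1}$. The key lemma to establish is: for any $\Pi^A_1$ set $F$ on which $\phi^X_e$ is total, some single $f\leq_T A$ dominates $\phi^X_e$ for every $X\in F$. Granting this, summing the measure bounds over $e$ shows that outside a set of measure at most $\epsilon$, every total $\phi^X_e$ is dominated by some function recursive in $A$; intersecting over $\epsilon\to 0$ yields a.e.\ domination.

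I expect the main obstacle to be this key lemma, specifically extracting the bound effectively from $A$. My plan is a compactness argument: since $F$ is closed in $2^\omega$ and $\phi^X_e$ is total on $F$, the basic clopens $[\sigma]$ with $\phi^\sigma_e(n)\da$ form an open cover of $F$, which by compactness has a finite subcover; hence there is some stage $s(n)$ by which every $\sigma\in 2^{s(n)}$ either has been enumerated out of $F$ (whose complement is $\Sigma^A_1$, hence $A$-enumerable) or satisfies $\phi^\sigma_e(n)\da$ by stage $s(n)$. This condition on $s$ is $A$-decidable uniformly in $s$, so $s(n)$ can be computed from $A$, and then
\[
f(n):=\max\{\phi^\sigma_e(n):\sigma\in 2^{s(n)},\ \phi^\sigma_e(n)\da\text{ by stage }s(n)\}
\]
is $\leq_T A$ and dominates $\phi^X_e$ on $F$. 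A side benefit is that the same $f$ works for all $X\in F$, so this approach actually yields the stronger u.a.e.\ domination conclusion.
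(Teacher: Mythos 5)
The paper does not prove this theorem; it is quoted from Dobrinen and Simpson, so there is no internal proof to compare against. Your argument for the stated equivalence is correct and is essentially the standard one. In $(1)\Rightarrow(2)$, the classes $A_{f,m}$ are indeed $\Pi^A_1$ subclasses of $P$ (using that the $U_n$ are nested), almost every $X\in P$ lands in some $A_{f,m}$ because $g_X\leq_T X$ is dominated, and a finite subunion of the countable union over $f\leq_T A$ and $m$ does the job. In $(2)\Rightarrow(1)$, your key lemma is the standard majorization fact for $\Pi^A_1$ classes: for fixed $n$ the cylinders on which $\varphi_e(n)$ converges, together with the cylinders enumerated into the $\Sigma^A_1$ complement of $F$, cover the compact space $2^\omega$, so an $A$-computable search finds the stage $s(n)$ and the max over halting computations gives $f\leq_T A$ majorizing $\varphi^X_e$ on $F$. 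The measure bookkeeping over $e$ and then over $\epsilon\to 0$ is fine.

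Your closing remark, however, is wrong: this argument does not yield u.a.e.~domination. The function produced by the key lemma is a single $f_{e,\epsilon}$ for each index $e$ and each tolerance $\epsilon$, and statement (2) gives the classes $F_e^{(\epsilon)}$ with no uniformity in $e$ or $\epsilon$. For any fixed $\epsilon$ you only control $X$ outside a set of measure $\epsilon$, so you must combine over all $\epsilon\to 0$, and without uniformity you cannot assemble the countably many $f_{e,\epsilon}$ into one $A$-computable dominant. The implication from a.e.~domination to u.a.e.~domination is in fact true, but it is precisely Corollary \ref{cor:pmd} of this paper and requires the $\leq_\LR$ machinery of Section \ref{sec:preserving} (Lemma \ref{lem:measure} and Theorem \ref{thm:equal}); if it dropped out of the compactness argument above, that section would be unnecessary.
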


Dobrinen and Simpson observed that $\WKL \nvdash \Gdelta$ and asked whether any (or all) of $\Gdelta$, $\Gepsilon$ 
or $\Positive$ implied $\ACA$.  They suggested finding simpler recursion theoretic equivalences of a.e.~domination and 
u.a.e.~domination to help answer this question.  At that time, it was known that 
\[
A \text{ is complete } (A \geq_T \emptyset')  \Rightarrow A \text{ is u.a.e.~dominating } \Rightarrow A \text{ is high } (A' \geq_T \emptyset'').
\]
The first implication is a result of Kurtz \cite{kur:81} while the second implication follows from Martin's Theorem \cite{mar:66}.  
Dobrinen and Simpson asked 
whether either of these implications reverses.  Cholak, Greenberg and Miller \cite{cho:06} proved that the first arrow does not 
reverse and that even $\Gdelta$, the strongest of the measure theoretic statements, does not imply $\ACA$.  

\begin{thm}[Cholak, Greenberg and Miller \cite{cho:06}]
There is a (c.e.) set $A <_T \emptyset'$ such that $A$ is u.a.e.~dominating (and hence u.a.e.~domination does not imply completeness).  
Furthermore, $\WKL + \Gdelta$ does not imply $\ACA$, and $\RCA + \Gdelta$ does not imply the much weaker 
principle $\DNR$.  
\end{thm}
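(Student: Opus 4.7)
I would split the argument into a combinatorial construction of the set $A$ and an extraction of the reverse mathematical statements via $\omega$-models.

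For the construction, the target is a c.e.\ set $A$ together with a function $f \leq_T A$ meeting two classes of requirements. The \emph{domination} requirements $D_e$ ask that for almost every $X \in 2^\omega$, $f$ dominates every total $\Phi_e^{X \oplus A}$; by the relativization of Theorem~\ref{thm:gdeltaequiv}, this gives not merely that $A$ is u.a.e.\ dominating, but the stronger statement that $A$ is u.a.e.\ dominating relative to $A$ itself, which is what the eventual model-theoretic argument needs. The \emph{diagonalization} requirements $N_e$ ask that $\Phi_e^A$ fails to be a $\{0,1\}$-valued DNR function; this yields $A \not\geq_T \emptyset'$ and, more strongly, that $A$ computes no DNR function.

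The guiding measure-theoretic fact is that $\{X : \Phi_e^{X \oplus A}(n) > k\}$ is uniformly $\Sigma^0_1(A)$, so its measure is $A$-approximable from below. If $f(n)$ is chosen large enough that $\mu\{X : \Phi_e^{X \oplus A}(n) > f(n)\} \leq 2^{-(e+n)}$ for all $e \leq n$, then Borel--Cantelli gives $D_e$ on a measure-one set. A $\emptyset'$-computable such $f$ is easy to manufacture; the combinatorial content is producing $f \leq_T A$ with $A$ a proper c.e.\ set. The plan is an infinite-injury priority construction in which enumerations into $A$ are driven by the current stage-$s$ measure approximations (fast enumeration allows $f$ to grow fast), while the $N_e$-restraints are enforced by permitting-style preservation of initial segments of $A$ below designated uses. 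The main obstacle, and the technical heart of the argument, is to show that the restraints do not prevent $f$ from attaining the required growth rate: one must argue that for each $e$ there are cofinitely many $n$ at which enough enumeration is ultimately permitted for $f(n)$ to meet the measure bound, which forces a delicate arrangement of priorities on an infinite-injury tree.

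With $A$ in hand, let $\mathcal{M} = \{B : B \leq_T A\}$. Downward closure gives $\mathcal{M} \models \RCA$, and since no element of $\mathcal{M}$ computes a DNR function, $\mathcal{M} \notmodels \DNR$. For $\mathcal{M} \models \Gdelta$, any $G_\delta$ set coded in $\mathcal{M}$ is $\Pi^0_2(B)$ for some $B \leq_T A$, hence $\Pi^0_2(A)$; the $A$-relativization of Theorem~\ref{thm:gdeltaequiv}, combined with u.a.e.\ domination of $A$ relative to $A$, then yields a $\Sigma^0_2(A)$ subset of equal measure, which is coded by an $F_\sigma$ set in $\mathcal{M}$. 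This handles $\RCA + \Gdelta \nvdash \DNR$. For the stronger $\WKL + \Gdelta \nvdash \ACA$, I would enlarge $\mathcal{M}$ to a countable Scott ideal $\mathcal{N}$ with $A \in \mathcal{N}$ and $\emptyset' \notin \mathcal{N}$, which can be arranged by building $A$ to lie inside such an ideal (for example by ensuring in the construction that $A$ has a PA-cover strictly below $\emptyset'$, after which the usual closure into a Scott ideal goes through). The same $\Gdelta$ analysis, applied uniformly to every $B \in \mathcal{N}$, gives $\mathcal{N} \models \WKL + \Gdelta$, while $\emptyset' \notin \mathcal{N}$ gives $\mathcal{N} \notmodels \ACA$.
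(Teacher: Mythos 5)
First, a point of order: the paper does not prove this theorem; it is quoted from Cholak, Greenberg and Miller \cite{cho:06}, so there is no internal proof to compare against. Judged on its own, your proposal contains a fatal gap, and it occurs at the step you flag as the key strengthening. The requirements $D_e$ as you state them are unsatisfiable: you ask for $f \leq_T A$ such that for almost every $X$, $f$ dominates every total $\Phi_e^{X \oplus A}$; but taking $\Phi_e^{X \oplus A} = f+1$ (computed from the $A$-column alone, ignoring $X$) would force $f$ to dominate $f+1$. Equivalently, what your model-theoretic step requires is that every $\Pi^0_2(A)$ class have a $\Sigma^A_2$ subclass of the same measure, which by (the relativization of) Theorem \ref{thm:equal} amounts to $A' \leq_\LR A$ --- and this fails for \emph{every} $A$, since $\Omega^A$ is $A$-random but $A'$-computable, hence not $A'$-random. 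Consequently the principal ideal $\{B : B \leq_T A\}$ is never an $\omega$-model of $\Gdelta$, no matter how cleverly $A$ is built, and enlarging it to a Scott ideal only adds more $G_\delta$ sets whose $F_\sigma$ subsets must be supplied. The argument is forced to be non-principal: one proves a relativizable single-step result (for suitable $C$ there is a $D \geq_T C$, c.e.\ in $C$, still avoiding $\emptyset'$ resp.\ DNR functions, such that every $\Pi^0_2(C)$ class has a $\Sigma^D_2$ subclass of equal measure) and iterates it to get a strictly $\leq_\LR$-increasing sequence $A_0 \leq_T A_1 \leq_T \cdots$; the generated ideal models $\Gdelta$ because the $F_\sigma$ subset of a $G_\delta$ set coded at level $n$ is found at level $n+1$.

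Two further points. The first half of your argument is a plan rather than a proof: the entire content of the incompleteness result in \cite{cho:06} is the verification that the negative restraints do not obstruct the growth of $f$, and you defer exactly that point to ``a delicate arrangement of priorities on an infinite-injury tree.'' Also, your inference from ``$\Phi_e^A$ is not a $\{0,1\}$-valued DNR function'' to ``$A$ computes no DNR function'' is not immediate; for c.e.\ $A$ it does follow, but via Arslanov's completeness criterion together with Jockusch's equivalence between DNR degrees and fixed-point-free degrees, and by the same token incompleteness of a c.e.\ set already yields DNR-avoidance for free, so the $N_e$ requirements should simply be incompleteness requirements (noting, for the iterated version, that this reasoning must be carried out relative to the previous level of the tower).
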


Binns, Kjos-Hanssen, Lerman and Solomon \cite{bin:06} proved that the second arrow does not reverse by constructing 
a high c.e.~set $A$ which is not a.e.~dominating. In addition, they found a connection between a.e.~domination 
and randomness, specifically the reducibility $\leq_\LR$ developed by Nies \cite{nie:05}.  

There are several ways to formalize algorithmic randomness and we start with a measure theoretic approach due to 
Martin-L\"{o}f.  A \emph{Martin-L\"{o}f test relative to an oracle} $A$ is an $A$-computable sequence of nested $\Sigma^A_1$ 
classes $U^A_0 \supseteq U^A_1 \supseteq \cdots$ such that $\mu(U^A_n) \leq 2^{-n}$.  
A set $R$ is $A$-\emph{random} if for every Martin-L\"{o}f test relative to $A$, 
$R \notin \bigcap_{n \in \omega} U^A_n$.  This notion of randomness is often called Martin-L\"{o}f randomness (relative to $A$) or $1$-randomness (relative to $A$).

\begin{defn}[Nies \cite{nie:05}]
	$A\leq_\LR B$ if every $B$-random real is $A$-random.
\end{defn}

The idea of $A \leq_\LR B$ is that $A$ is no more useful than $B$ in the sense that $A$ does not ``derandomize" any 
$B$-random sets.  

\begin{thm}[Binns, Kjos-Hanssen, Lerman and Solomon \cite{bin:06}]
\label{thm:aerandom}
If $A$ is a.e.~dominating, then $\emptyset' \leq_\LR A$. 
\end{thm}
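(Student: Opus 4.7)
The plan is to convert each $\emptyset'$-Martin-L\"of test into an $A$-Martin-L\"of test whose null set contains the original one, so that every $A$-random real automatically avoids the $\emptyset'$-test, giving $\emptyset'\le_\LR A$. The main tool is Theorem~\ref{thm:aeequiv}, which converts a.e.~domination by $A$ into a quantitative $\Pi^A_1$-approximation of $\Pi^0_2$ classes.

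Concretely, fix an $A$-random $R$ and a $\emptyset'$-Martin-L\"of test $\{V_n\}_{n\in\omega}$, so each $V_n$ is $\Sigma^0_2$ with $\mu(V_n)\le 2^{-n}$. First I pass to complements: $V_n^c$ is $\Pi^0_2$ with $\mu(V_n^c)\ge 1-2^{-n}$. Applying Theorem~\ref{thm:aeequiv} to $V_n^c$ with $\epsilon = 2^{-n}$ produces a $\Pi^A_1$ class $F_n\subseteq V_n^c$ with $\mu(F_n)\ge 1-2^{-n+1}$. Setting $U_n := 2^\omega\setminus F_n$ yields a $\Sigma^A_1$ class containing $V_n$ with $\mu(U_n)\le 2^{-n+1}$. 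After a standard reindex and nesting, $\{U_{n+2}\}_{n\in\omega}$ is an $A$-Martin-L\"of test, and $\bigcap_n V_n\subseteq\bigcap_n U_n$. Since $R$ is $A$-random, $R\notin\bigcap_n U_n$, hence $R\notin\bigcap_n V_n$; as the $\emptyset'$-test was arbitrary, $R$ is $\emptyset'$-random.

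The main obstacle is that Theorem~\ref{thm:aeequiv} is stated purely existentially, while assembling an $A$-Martin-L\"of test demands a uniform $A$-computable sequence of $\Sigma^A_1$ indices for the $U_n$. I expect to justify the uniformity by inspecting the proof of the implication $(1)\Rightarrow(2)$ of Theorem~\ref{thm:aeequiv}: once a single $A$-computable function witnessing a.e.~domination is fixed in advance, the construction of the approximating $\Pi^A_1$ class should be uniform in a $\Pi^0_2$ index for the input class and in the precision parameter $\epsilon$. Confirming this uniformity is the only non-routine step of the argument; the measure bookkeeping and the reduction to a Martin-L\"of test are then immediate.
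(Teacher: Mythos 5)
Your reduction of the problem to producing, for each $n$, a $\Sigma^A_1$ class $U_n \supseteq V_n$ with $\mu(U_n) \leq 2^{-n+1}$ is fine as far as it goes, but the uniformity you defer to the end is not a routine verification --- it is exactly where the argument breaks. A Martin-L\"of test relative to $A$ must be an $A$-computable sequence of $\Sigma^A_1$ classes, and a merely pointwise-existing sequence of small $\Sigma^A_1$ classes whose intersection contains $R$ says nothing about the $A$-randomness of $R$: for instance $U_n = [\{R[n]\}]$ is a non-uniform sequence of clopen sets with $\mu(U_n) = 2^{-n}$ that captures $R$. Your proposed repair also misreads the hypothesis: a.e.~domination does \emph{not} supply ``a single $A$-computable function witnessing'' it --- that is u.a.e.~domination; under a.e.~domination the dominating function varies with $g$. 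Even granting a single dominating $f \leq_T A$, the construction behind Theorem~\ref{thm:aeequiv} writes $P$ (modulo a null set) as a countable union of closed pieces indexed by pairs $(e,m)$ and then selects a finite subfamily whose union has measure within $\epsilon$ of $\mu(P)$; making that selection requires knowing $\mu(P)$ and the measures of the pieces, which is roughly $\emptyset''$-level information and not $A$-computable uniformly in an index for $P$ and in $\epsilon$. So no uniform sequence of $\Sigma^A_1$ indices for the $U_n$ falls out of that proof, and the assembled $\{U_{n+2}\}$ need not be an $A$-Martin-L\"of test.

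The gap can be closed by arranging to apply the domination hypothesis only once. Fix a universal $\emptyset'$-Martin-L\"of test $\{\hat U_n\}_{n\in\omega}$; the class $P = \hat U_1^c$ is $\Pi^0_2$, has measure at least $1/2$, and consists entirely of $\emptyset'$-random reals. A single application of Theorem~\ref{thm:aeequiv} (indeed, only positive measure domination is needed) yields one $\Pi^A_1$ class $Q \subseteq P$ with $\mu(Q) > 0$. By Ku\v{c}era's theorem, every $A$-random $R$ has a tail lying in every positive-measure $\Pi^A_1$ class, so some tail of $R$ lies in $Q \subseteq P$ and is therefore $\emptyset'$-random; since $\emptyset'$-randomness is invariant under finite shifts, $R$ itself is $\emptyset'$-random. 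This is in substance the argument of Binns, Kjos-Hanssen, Lerman and Solomon that the paper restates after Theorem~\ref{thm:aerandom}, and it is why only positive measure domination is used there; the uniformity problem never arises. (Alternatively, once Theorem~\ref{thm:bjorn} is available: a.e.~domination gives a positive-measure $\Pi^A_1$ subclass of every positive-measure $\Pi^{\emptyset'}_1$ class, since such classes are $\Pi^0_2$, and $\emptyset' \leq_\LR A$ follows immediately.)
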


Applying work of Nies \cite{nie:05}, it follows from Theorem \ref{thm:aerandom} that if $A \leq_T \emptyset'$ is a.e.~dominating, 
then $A$ is high, in fact superhigh (namely, $\emptyset'' \leq_{tt} A'$). Using the methods introduced in the present paper, Simpson \cite{sim:07} has generalized this corollary by removing the restriction that $A \leq_T \emptyset'$.  

The proof of Theorem \ref{thm:aerandom} actually shows that 
$\emptyset' \leq_{LR} A$ follows from the assumption that for every $\Pi^0_2$ class $P \subseteq 2^{\omega}$ such that 
$\mu(P) > 0$, there is a $\Pi^A_1$ class $Q \subseteq P$ such that $\mu(Q) > 0$.  (This property is the recursion 
theoretic analogue of $\Positive$.)  Kjos-Hanssen proved that this property is equivalent to what he called \emph{positive measure} (\emph{p.m.}) \emph{domination} and 
proved the following general theorem connecting $\leq_\LR$ with the ability to find closed subclasses of positive measure.

\begin{thm}[Kjos-Hanssen \cite{kjo:07}]
\label{thm:bjorn}
	$A\leq_\LR B$ if and only if every $\Pi^A_1$ class of positive measure has a $\Pi^B_1$ subclass of positive measure.
\end{thm}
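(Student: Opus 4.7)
The plan is to prove both directions of the biconditional separately. First, I would reformulate the statement by passing to complements: the desired ``subclass'' property is equivalent to saying that every $A$-c.e.~open set $U$ with $\mu(U) < 1$ is contained in some $B$-c.e.~open set $V$ with $\mu(V) < 1$. Establishing the theorem for this equivalent form streamlines both implications.

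For $(\Rightarrow)$, assume $A\leq_\LR B$, let $P$ be $\Pi^A_1$ with $\mu(P) > 0$, and set $U = 2^\omega \setminus P$. The task is to produce a $B$-c.e.~open $V \supseteq U$ with $\mu(V) < 1$. My approach is to enumerate $V$ via a $B$-driven witness-collection procedure: fixing an index for $U$ as an $A$-c.e.~open set, we add a basic cylinder $[\sigma]$ to $V$ at stage $s$ as soon as $B$ has enough evidence that $[\sigma]$ gets enumerated into $U$ under a sufficient ``mass'' of approximating oracles, where ``sufficient'' is calibrated against the universal $B$-ML-test so that the construction remains $B$-c.e. The hypothesis $A\leq_\LR B$ enters when bounding $\mu(V)$: if $\mu(V) = 1$, the construction would pinpoint a $B$-random point lying in every $B$-c.e.~superset of $U$, and unpacking the construction would exhibit this point as $B$-random yet non-$A$-random, contradicting $A\leq_\LR B$.

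For $(\Leftarrow)$, I argue by contrapositive. If $A \not\leq_\LR B$, some $X$ is $B$-random but not $A$-random, so $X$ belongs to every level $U^A_n$ of the universal $A$-ML-test. Applying the covering hypothesis to each $U^A_n$ yields $B$-c.e.~open $V_n \supseteq U^A_n$ with $\mu(V_n) < 1$, each containing $X$. The difficulty is that the raw hypothesis provides no quantitative link between $\mu(U^A_n) \leq 2^{-n}$ and $\mu(V_n)$, whereas a $B$-ML-test requires $\mu(V_n) \to 0$. I would bridge this gap by self-amplification: apply the hypothesis not to $U^A_n$ directly but to XOR-translates or product-lifts of $U^A_n$ inside $2^\omega$ (viewed as a group), and combine the resulting covers to drive $\mu(V_n)$ arbitrarily close to $\mu(U^A_n)$. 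The combined covers form a genuine $B$-ML-test capturing $X$, contradicting $B$-randomness.

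The main obstacle, I expect, lies in the $(\Rightarrow)$ construction: encoding the $A$-c.e.~structure of $U$ into a $B$-c.e.~object with controlled measure, using only the measure-theoretic content of $\leq_\LR$ and no Turing-reducibility between $A$ and $B$. This is precisely the step whose technique the present paper then generalizes in order to establish the stronger results advertised in the abstract—the equivalence of weak $2$-randomness lowness with Martin-L\"{o}f lowness, and the implication $\leq_\LR \Rightarrow \leq_\LK$.
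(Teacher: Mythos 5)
First, a point of reference: the paper does not prove Theorem~\ref{thm:bjorn} at all --- it is imported from Kjos-Hanssen \cite{kjo:07} and used as a black box in the proofs of Theorem~\ref{thm:LR->LK} and Lemma~\ref{lem:measure} --- so your proposal has to be measured against the known proof rather than anything in this text. Your complementation of the statement is fine and you have correctly located the hard direction, but both directions as sketched have genuine gaps. In the direction ``covering implies $A\leq_\LR B$,'' the self-amplification step fails: intersecting $B$-c.e.\ covers of XOR-translates or product-lifts of $U^A_n$ yields a set whose measure is at most $\min_i\mu(V_i)$, and since the hypothesis gives no quantitative control on $\mu(V_i)$ beyond $\mu(V_i)<1$ (and no independence between the covers), you cannot drive the measure of the combined cover down toward $\mu(U^A_n)$; such quantitative control is essentially the content of the much harder Theorem~\ref{thm:LR->LK}, which itself uses Theorem~\ref{thm:bjorn}. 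The standard proof of this direction does not build a $B$-test level by level: one applies the hypothesis once, to the $\Pi^A_1$ class $(U^A_1)^c$ (which has measure at least $1/2$ and, because the universal test is nested, consists entirely of $A$-random reals), obtaining a positive-measure $\Pi^B_1$ class $Q$ of $A$-randoms; then Ku\v{c}era's theorem relativized to $B$ says every $B$-random has a tail in $Q$, and invariance of randomness under finite shifts finishes the argument. (A concatenation-power amplification does work, but only when applied to the $B$-side cover of the single class $U^A_1$ and combined with the same tail-invariance.) The missing ingredient is Ku\v{c}era/tail-invariance, not amplification.

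The direction ``$A\leq_\LR B$ implies covering'' is where the real content of \cite{kjo:07} lies, and your sketch does not yet contain a mechanism for it. Two concrete problems. First, ``sufficient mass of approximating oracles'' is undefined: $A$ is a single fixed oracle, and the only natural candidate --- thresholding the measure of $\{Z : \sigma\in W^Z\}$ --- produces an unrelativized $\Sigma^0_1$ set that need not contain $U=[W^A]$, because the oracles agreeing with $A$ on the relevant uses may form a class of arbitrarily small measure. Second, the proposed contradiction does not go through: if $\mu(V)=1$ you would at best locate a $B$-random point of $U$, but membership in $U$ --- an open set of measure less than $1$ --- is no obstruction to $A$-randomness, since every open set of positive measure contains $A$-random points. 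To contradict $A\leq_\LR B$ you must capture a $B$-random point with an entire $A$-Martin-L\"of test, and nothing in the construction manufactures such a test. This is exactly the step that requires the genuinely new measure-theoretic argument of \cite{kjo:07}; your proposal correctly identifies it as the main obstacle but then defers it rather than supplying it.
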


Combining Theorem \ref{thm:bjorn} with the well-known result of Kurtz \cite{kur:81} that every $\Pi^0_2$ class has a 
$\Sigma^{\emptyset'}_2$ subclass of the same measure, it follows that $\emptyset' \leq_\LR A$ exactly characterizes the p.m.~dominating sets.

\begin{cor}[Kjos-Hanssen \cite{kjo:07}]
\label{cor:bjorn}
$A$ is p.m.~dominating if and only if $\emptyset' \leq_\LR A$.
\end{cor}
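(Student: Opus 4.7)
The plan is to derive the corollary as a straightforward bookkeeping argument from Theorem \ref{thm:bjorn} together with the cited result of Kurtz that every $\Pi^0_2$ class has a $\Sigma^{\emptyset'}_2$ subclass of the same measure. The only real work is to translate between the ``one-quantifier'' view (relativized $\Pi_1$ classes) offered by Theorem \ref{thm:bjorn} and the ``two-quantifier'' view ($\Pi^0_2$ classes) that appears in the definition of p.m.~domination.

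For the forward direction, assume $A$ is p.m.~dominating. By Theorem \ref{thm:bjorn} applied with the first set taken to be $\emptyset'$, it suffices to show that every $\Pi^{\emptyset'}_1$ class of positive measure contains a $\Pi^A_1$ subclass of positive measure. But every $\Pi^{\emptyset'}_1$ class is in particular a $\Pi^0_2$ class, so this is a special case of the definition of p.m.~domination.

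For the reverse direction, assume $\emptyset' \leq_\LR A$ and let $P \subseteq 2^\omega$ be a $\Pi^0_2$ class with $\mu(P) > 0$. I would first invoke Kurtz to choose a $\Sigma^{\emptyset'}_2$ subclass $S \subseteq P$ with $\mu(S) = \mu(P) > 0$. Writing $S = \bigcup_{n} C_n$ as a countable union of $\Pi^{\emptyset'}_1$ classes, countable additivity gives some $n$ with $\mu(C_n) > 0$. Now Theorem \ref{thm:bjorn}, applied to this $\Pi^{\emptyset'}_1$ class, produces a $\Pi^A_1$ subclass $Q \subseteq C_n$ with $\mu(Q) > 0$. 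Since $Q \subseteq C_n \subseteq S \subseteq P$, this witnesses p.m.~domination for $P$.

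There is no real obstacle here; the substantive content is entirely in Theorem \ref{thm:bjorn} and in Kurtz's approximation theorem. The only point worth stating carefully is why a $\Sigma^{\emptyset'}_2$ class of positive measure must contain a $\Pi^{\emptyset'}_1$ subclass of positive measure, which is the trivial use of countable additivity above.
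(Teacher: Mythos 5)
Your proof is correct and follows exactly the route the paper intends: the paper derives Corollary \ref{cor:bjorn} in one sentence by combining Theorem \ref{thm:bjorn} with Kurtz's result that every $\Pi^0_2$ class has a $\Sigma^{\emptyset'}_2$ subclass of the same measure, and your write-up simply fills in the routine details (a $\Pi^{\emptyset'}_1$ class is $\Pi^0_2$, and a $\Sigma^{\emptyset'}_2$ class of positive measure contains a $\Pi^{\emptyset'}_1$ component of positive measure).
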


As this point, we have the following picture.
\begin{gather*}
A \text{ is u.a.e.~dominating} \Rightarrow A \text{ is a.e.~dominating}  \\
\Rightarrow A \text{ is p.m.~dominating} \Leftrightarrow \emptyset' \leq_\LR A
\end{gather*}
In Section \ref{sec:preserving}, we close this circle by showing that if $A$ is p.m.~dominating, then $A$ is u.a.e.~dominating. This result is an application of a more general theorem along the lines of Theorem \ref{thm:bjorn}: every $\Sigma^A_2$ class has a $\Sigma^B_2$ subclass of the same measure if and only if $A \leq_\LR B$ and $A \leq_T B'$. As another application, we prove that if $A$ is low for $1$-randomness then it is low for weak $2$-randomness (see also Nies~\cite{ni:09}). The main technique used in Section~\ref{sec:preserving} gives us a new way to leverage the assumption that $A \leq_\LR B$. It is first introduced in Section~\ref{sec:randomred}, where we show that $\leq_\LR$ implies $\leq_\LK$, a reducibility that compares the strength of oracles in terms of their effect on prefix-free Kolmogorov complexity.

In the remaining sections, we examine the implication of the equivalence of u.a.e.~domination and p.m.~domination for the
reverse mathematics question of how difficult it is to prove that $\Positive \rightarrow \Gdelta$. In Section
\ref{sec:rec}, we show that $\RCA$ is not strong enough to prove this implication, or even that $\Gepsilon \rightarrow
\Gdelta$. In Section \ref{sec:wwkl}, we show that $\WWKL\vdash \Positive \rightarrow \Gdelta$. Notice that since $\WKL$
does not prove $\Gdelta$, the fact that $\WWKL$---which is weaker than $\WKL$---proves this implication is not trivial.
Moreover, since measure theory is very limited without $\WWKL$ \cite{yu:90}, it is reasonable to work over this system to
prove the equivalence.

Our notation is standard throughout. We use $\subseteq$ to denote the subset relation between sets (or classes),
$\sqsubseteq$ to denote the initial segment relation between (finite or infinite) strings, and $|\sigma|$ to denote the
length of a finite string $\sigma$. We identify a set $X$ with the infinite string given by its characteristic function.
For $X \subseteq \omega$ and $s \in \omega$, $X[s]$ denotes the string $\langle X(0), X(1), \ldots, X(s-1) \rangle$. For
$Y \subseteq 2^{< \omega}$, $[Y]$ denotes the open class in $2^{\omega}$ of all $X$ such that $\exists \sigma \in Y
(\sigma \sqsubseteq X)$. If $Z \subseteq 2^{\omega}$, then $Z^c = 2^{\omega} \setminus Z$. Finally, if $M$ is any machine
(viewed as defining a partial function from $2^{< \omega}$ to $2^{< \omega}$), then $\text{dom}(M)$ denotes the set of
strings on which $M$ converges (that is, the domain of the defined function).

\section{$\leq_\LR$ implies $\leq_\LK$}
\label{sec:randomred}

In this section, we examine the relationship between $\leq_\LR$ and $\leq_\LK$, a reducibility based on an information 
theoretic definition of randomness.  The reader who is not familiar with Kolmogorov complexity is referred to Li and 
Vit\'anyi \cite{livi:08} for an introduction.  If $U$ is a universal prefix-free (Turing) machine and $\tau$ is a finite binary string, then the \emph{prefix-free} (\emph{Kolmogorov}) \emph{complexity} of $\tau$ is defined (up to an additive constant depending on the choice of $U$) by 
\[
K(\tau) = \min \{ |\sigma| \mid U(\sigma) = \tau \}.
\]
We will use two basic facts from the theory of Kolmogorov complexity.  

\begin{lem}[Kraft inequality] 
If $A \subseteq 2^{< \omega}$ is prefix-free, then $\sum_{\sigma \in A} 2^{-|\sigma|} \leq 1$.  In particular, if $M$ is a 
prefix-free Turing machine, then $\sum_{\sigma \in \text{dom}(M)} 2^{-|\sigma|} \leq 1$.
\end{lem}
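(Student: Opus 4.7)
The plan is to prove the Kraft inequality by a direct measure-theoretic argument on Cantor space, and then deduce the statement about prefix-free machines as an immediate corollary from the definition.

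First I would associate to each finite string $\sigma \in 2^{<\omega}$ the basic open cylinder $[\{\sigma\}] \subseteq 2^\omega$ consisting of all infinite extensions of $\sigma$. By the definition of Lebesgue measure on $2^\omega$, $\mu([\{\sigma\}]) = 2^{-|\sigma|}$. The key observation is that if $\sigma, \tau \in 2^{<\omega}$ are incomparable under $\sqsubseteq$, then $[\{\sigma\}]$ and $[\{\tau\}]$ are disjoint, whereas if one is a proper initial segment of the other, the corresponding cylinders are nested. Hence the hypothesis that $A$ is prefix-free translates directly into pairwise disjointness of the family $\{[\{\sigma\}] : \sigma \in A\}$.

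Next, using countable additivity of Lebesgue measure (the set $A$ is at most countable since $2^{<\omega}$ is), I would compute
\[
\sum_{\sigma \in A} 2^{-|\sigma|} \;=\; \sum_{\sigma \in A} \mu([\{\sigma\}]) \;=\; \mu\!\left(\bigcup_{\sigma \in A} [\{\sigma\}]\right) \;\leq\; \mu(2^\omega) \;=\; 1.
\]
For finite $A$ this is completely elementary, and for infinite $A$ it reduces to taking the supremum over finite subsums, which gives the same bound.

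The second assertion is then immediate: by definition, a prefix-free Turing machine $M$ is one whose domain $\dom(M) \subseteq 2^{<\omega}$ is a prefix-free set, so the first part applied to $A = \dom(M)$ yields $\sum_{\sigma \in \dom(M)} 2^{-|\sigma|} \leq 1$. There is no real obstacle here; the only point requiring mild care is the passage from finite to countably infinite $A$, which is handled by the standard monotone convergence of partial sums of non-negative terms together with countable additivity of $\mu$.
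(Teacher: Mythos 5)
Your proof is correct: the identification of prefix-freeness with pairwise disjointness of the cylinders $[\{\sigma\}]$, followed by countable additivity of $\mu$ on $2^\omega$, is the standard argument, and the paper itself states this lemma without proof as a basic fact from the theory of Kolmogorov complexity. The deduction for prefix-free machines via $A = \dom(M)$ is likewise immediate and handled correctly.
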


\begin{thm}[Kraft--Chaitin Theorem] 
Let $\langle d_i, \tau_i \rangle_{i \in \omega}$ be a computable sequence of pairs such that $d_i \in \omega$, $\tau_i
\in 2^{< \omega}$ and $\sum_{i \in \omega} 2^{-d_i} \leq 1$. (The range $\{\langle d_i,\tau_i\rangle : i\in\omega\}$ of such a sequence is called a Kraft--Chaitin set.) There is a
prefix-free machine $M$ and strings $\sigma_i$ of length $d_i$ such that $M(\sigma_i) = \tau_i$ for all $i \in \omega$.
In particular, the universality of $U$ implies that $K(\tau_i) \leq d_i + O(1)$.
\end{thm}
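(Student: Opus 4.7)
The plan is to construct $M$ effectively from the computable enumeration of the Kraft--Chaitin set $\langle d_i,\tau_i\rangle$, processing requests one at a time and declaring each $\sigma_i$ with $M(\sigma_i)=\tau_i$ and $|\sigma_i|=d_i$. Throughout the construction I will maintain, after the first $i$ stages, a finite antichain $S_i\subseteq 2^{<\omega}$ with the two invariants that $[S_i]=2^{\omega}\setminus\bigcup_{j<i}[\sigma_j]$ and that the lengths of the members of $S_i$ are pairwise distinct. I start with $S_0$ consisting of just the empty string, for which both invariants are trivial.

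At stage $i$, I first observe that some $\sigma\in S_i$ must satisfy $|\sigma|\leq d_i$. By hypothesis, $\mu([S_i])=1-\sum_{j<i}2^{-d_j}\geq 2^{-d_i}$, whereas if every element of $S_i$ had length greater than $d_i$, the distinct-lengths condition together with finiteness of $S_i$ would force $\mu([S_i])<\sum_{\ell>d_i}2^{-\ell}=2^{-d_i}$, a contradiction. Let $\sigma$ be the \emph{longest} element of $S_i$ with $|\sigma|\leq d_i$ (unique by distinct lengths); set $\sigma_i:=\sigma\,1^{d_i-|\sigma|}$ and declare $M(\sigma_i):=\tau_i$. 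To produce $S_{i+1}$, remove $\sigma$ from $S_i$ and insert the strings $\sigma\,1^k0$ for $0\leq k<d_i-|\sigma|$; these together with $\sigma_i$ partition the cylinder $[\sigma]$, so the cylinder invariant carries over.

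The step that I expect to require the most care is preserving the distinct-lengths invariant. The newly inserted strings have the distinct lengths $|\sigma|+1,|\sigma|+2,\ldots,d_i-1$, so I must check that no surviving element of $S_i$ has a length in this range. This is precisely why $\sigma$ was chosen as the longest element of $S_i$ of length at most $d_i$: every other member of $S_i$ has length either strictly less than $|\sigma|$ or strictly greater than $d_i$, and the old distinct-lengths hypothesis rules out a second element of length $|\sigma|$. Thus the distinct-lengths condition is inherited by $S_{i+1}$.

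Once the invariants are established, the $\sigma_i$ lie in pairwise disjoint cylinders and hence form a prefix-free set, so the machine $M$ is prefix-free. The construction is uniformly computable from the given enumeration of the Kraft--Chaitin set, so $M$ is a computable prefix-free machine, and the promised bound $K(\tau_i)\leq d_i+O(1)$ follows at once from the universality of $U$.
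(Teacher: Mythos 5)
The paper states the Kraft--Chaitin Theorem as one of its two ``basic facts'' from Kolmogorov complexity and gives no proof, so there is nothing internal to compare against; your argument is the standard machine-existence construction (maintain a finite antichain of ``free'' cylinders with pairwise distinct lengths, and at each request split off a cylinder of measure exactly $2^{-d_i}$), and it is correct. All the key points are in place: the measure count $\mu([S_i])=1-\sum_{j<i}2^{-d_j}\geq 2^{-d_i}$ versus the strict bound $\sum_{\ell>d_i}2^{-\ell}=2^{-d_i}$ for a finite set of distinct lengths all exceeding $d_i$, the choice of the \emph{longest} admissible $\sigma$ to protect the distinct-lengths invariant, and the disjointness of the cylinders $[\sigma_i]$ giving prefix-freeness of $\dom(M)$. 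One small slip: the inserted strings $\sigma 1^k 0$ for $0\leq k<d_i-|\sigma|$ have lengths $|\sigma|+1,\ldots,d_i$, not $|\sigma|+1,\ldots,d_i-1$. This does not damage the argument, because every surviving member of $S_i$ has length either strictly less than $|\sigma|$ or strictly greater than $d_i$ (a second element of length exactly $d_i$ would contradict the maximality of $\sigma$ among elements of length at most $d_i$), so the full range $[|\sigma|+1,d_i]$ is still avoided.
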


$A$ is called \emph{Levin-Chaitin random} if for all $n$, $K(A[n]) \geq n - O(1)$.  Despite the difference in context, this notion of 
randomness coincides with Martin-L\"{o}f randomness defined above.  Nies \cite{nie:05} defined a reducibility $\leq_\LK$ 
similar to $\leq_\LR$, but based on Kolmogorov complexity.  The idea of this reducibility is that $A \leq_\LK B$ if 
$A$ is no more useful than $B$ in the sense that $A$ cannot compress information any more than $B$ can.  

\begin{defn}[Nies \cite{nie:05}]
	$A\leq_\LK B$ if $(\forall \tau)\; K^B(\tau)\leq K^A(\tau)+O(1)$.
\end{defn}

It is straightforward to show that $A\leq_\LK B$ implies $A\leq_\LR B$; our goal for this section is to show that 
they are equivalent.  Our proof will require one basic fact from real analysis. 

\begin{lem}
\label{lem:analysis}
	Let $\seq{a_i}_{i\in\omega}$ be a sequence of real numbers with $0\leq a_i< 1$, for all
$i$. Then $\prod_{i\in\omega} (1-a_i)>0$ iff\/ $\sum_{i\in\omega} a_i$ converges.
\end{lem}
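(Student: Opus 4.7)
The plan is to split the claim into its two directions and prove each by comparing the partial products $P_n = \prod_{i \leq n}(1-a_i)$ with the exponential $\exp\!\bigl(-\sum_{i \leq n} a_i\bigr)$, using the elementary two-sided estimate $e^{-2a} \leq 1 - a \leq e^{-a}$ that holds on a suitable interval.

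For the forward direction, I would prove the contrapositive: assume $\sum_i a_i = \infty$ and show $\prod_i (1-a_i) = 0$. The inequality $1 - a \leq e^{-a}$ is valid for every $a \in [0,1)$ (it follows from the convexity of $e^{-a}$, or just from the Taylor expansion). Multiplying these bounds gives $P_n \leq \exp\!\bigl(-\sum_{i \leq n} a_i\bigr)$, and since the exponent goes to $-\infty$ we conclude $P_n \to 0$. Since the $P_n$ are nonincreasing (each factor is in $[0,1]$), the infinite product is $0$.

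For the reverse direction, assume $\sum_i a_i < \infty$. Convergence forces $a_i \to 0$, so we can choose $N$ with $a_i \leq 1/2$ for all $i \geq N$. On $[0,1/2]$ the inequality $1 - a \geq e^{-2a}$ holds (check endpoints and note the function $1 - a - e^{-2a}$ is concave-up with nonnegative values there, or equivalently use $-\ln(1-a) \leq 2a$ from the geometric-series expansion of $-\ln(1-a) = \sum_{k \geq 1} a^k/k$). Hence for $n \geq N$,
\[
\prod_{i=N}^{n}(1-a_i) \;\geq\; \exp\!\Bigl(-2\sum_{i=N}^{n} a_i\Bigr) \;\geq\; \exp\!\Bigl(-2\sum_{i=N}^{\infty} a_i\Bigr) \;>\; 0.
\]
The finite prefix $\prod_{i < N}(1 - a_i)$ is a positive real (each factor lies in $(0,1]$ since $a_i < 1$), so the full product is bounded below by a positive constant and is therefore positive.

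There isn't really a hard step here; the only mildly delicate point is verifying the lower bound $1 - a \geq e^{-2a}$ on $[0,1/2]$, and this is a one-line calculus check. Everything else is routine comparison with the exponential and the observation that the tail of a convergent series is eventually less than $1/2$.
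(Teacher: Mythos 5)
Your proof is correct and takes essentially the same route as the paper, which states the lemma without proof but establishes its formalized version (Proposition~\ref{prop:converge}, via Lemmas~\ref{lem:lower} and~\ref{lem:upper}) from exactly the two-sided bound $x \leq |\ln(1-x)| \leq 2x$ on $[0,1/2]$ --- the logarithmic form of your estimate $e^{-2a} \leq 1-a \leq e^{-a}$. (One trivial slip: $1-a-e^{-2a}$ is concave, not ``concave-up,'' since its second derivative is $-4e^{-2a}<0$; it is concavity together with nonnegativity at the endpoints of $[0,1/2]$ that yields the bound, and your series expansion of $-\ln(1-a)$ justifies it correctly in any case.)
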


\begin{lem}\label{2009}
For any computable function $f:\omega\rightarrow\omega$ there is a uniformly computable collection of finite sets of binary strings $V_n$, $n\in\omega$, such that $\mu [V_n]=2^{-f(n)}$ and the sets $[V_n]$, $n\in\omega$, form a mutually independent family of events under $\mu$.
\end{lem}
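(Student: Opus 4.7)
The plan is to realize each $V_n$ as a cylinder condition supported on a disjoint block of coordinates, which automatically gives mutual independence under the product measure. First, partition $\omega$ into successive finite blocks $B_n = [t_n, t_{n+1})$, where $t_0 = 0$ and $t_{n+1} = t_n + f(n)$; both the sequence $\seq{t_n}$ and the blocks $B_n$ are uniformly computable from $f$.

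For each $n$, let $V_n$ be the set of all binary strings of length $t_{n+1}$ whose restriction to the block $B_n$ is identically zero (with free choice on the coordinates below $t_n$). Then $V_n$ is a finite set of strings, uniformly computable from $n$, and
\[
[V_n] = \{X \in 2^\omega : X(j) = 0 \text{ for every } j \in B_n\},
\]
so $\mu[V_n] = 2^{-|B_n|} = 2^{-f(n)}$. The degenerate case $f(n) = 0$ gives $B_n = \emptyset$ and $[V_n] = 2^\omega$, which is fine.

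For mutual independence, it suffices to check that for every finite $F \subseteq \omega$,
\[
\mu\Bigl(\bigcap_{n \in F} [V_n]\Bigr) = \prod_{n \in F} \mu[V_n].
\]
The intersection on the left is the set of $X$ with $X(j) = 0$ for all $j$ in the disjoint union $\bigsqcup_{n \in F} B_n$, which has total size $\sum_{n \in F} f(n)$; hence its measure equals $2^{-\sum_{n \in F} f(n)} = \prod_{n \in F} 2^{-f(n)}$, as required. The construction is elementary and I do not foresee a real obstacle; the only point worth noting is that the lemma asks for mutual (not merely pairwise) independence, and this is precisely what the disjointness of the blocks $B_n$ delivers for free.
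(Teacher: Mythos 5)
Your construction is essentially identical to the paper's: the paper defines $V_s=\{\sigma\cat 0^{f(s)}:\sigma\in 2^k\}$ where $k$ is the length of the longest string already used, which is exactly your ``force the block $B_s=[t_s,t_{s+1})$ to zero, free below'' cylinder. The argument is correct, and your explicit verification of mutual independence via disjointness of the blocks is just what the paper leaves as ``clear.''
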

\begin{proof}
Assume that $V_t$ has been defined for all $t<s$. Let $k$ be the length of the longest string in $\bigcup_{t<s}V_t$ and let $V_s=\{\sigma\cat 0^{f(s)}: \sigma\in 2^k\}$. It is clear that $V_s$, $s\in\omega$, has the required properties.
\end{proof}

\begin{thm}\label{thm:LR->LK}
	If $A\leq_\LR B$, then $A\leq_\LK B$.
\end{thm}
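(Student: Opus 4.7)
The plan is to convert an $A$-c.e.\ Kraft--Chaitin enumeration for $K^A$ into a $B$-c.e.\ one for $K^B$ by passing through an open class. The tool will be the reformulation of Theorem~\ref{thm:bjorn} dual to the one stated there: $A\leq_\LR B$ implies that every $\Sigma^A_1$ open class of measure strictly less than $1$ lies inside some $\Sigma^B_1$ open class of measure strictly less than $1$.

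First I would fix an $A$-c.e.\ Kraft--Chaitin set $L=\{\langle d_i,\tau_i\rangle:i\in\omega\}$ for $K^A$ with $\sum_i 2^{-d_i}\leq 1$, together with a computable prefix-free encoding $\tau\mapsto\bar\tau$ of length $|\bar\tau|=|\tau|+2\log|\tau|+O(1)$, so in particular $|\bar\tau|\geq K^A(\tau)$ for all $\tau$. The new ingredient is an application of Lemma~\ref{2009} with a computable $f$ arranged, via a computable bijection $g$ between $\omega$ and the set of triples $(d,\tau,i)$ with $d\leq|\bar\tau|$ and $1\leq i\leq 2^{|\bar\tau|-d}$, so that $\mu[V_{g(d,\tau,i)}]=2^{-|\bar\tau|}$ and the entire family is mutually independent. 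Each pair $(d,\tau)$ is thus represented by a \emph{bundle} of $2^{|\bar\tau|-d}$ pre-assigned independent events of aggregate measure $2^{-d}$, with indexing that $B$ can invert.

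Next I would form the $A$-c.e.\ open class
\[
\mathcal U\;=\;\bigcup_{\langle d,\tau\rangle\in L}\ \bigcup_{1\leq i\leq 2^{|\bar\tau|-d}}\,[V_{g(d,\tau,i)}].
\]
The total measure of events in $\mathcal U$ is $\sum_i 2^{|\bar\tau_i|-d_i}\cdot 2^{-|\bar\tau_i|}=\sum_i 2^{-d_i}\leq 1$, so by Lemma~\ref{lem:analysis} and mutual independence, $\mu(\mathcal U)\leq 1-e^{-2}<1$. Applying Theorem~\ref{thm:bjorn} to $\mathcal U^c$ yields a $\Sigma^B_1$ open $\mathcal V\supseteq\mathcal U$ with $\mu(\mathcal V)=:s<1$. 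Then $B$ enumerates the $\Sigma^B_1$ set $T=\{n:[V_n]\subseteq\mathcal V\}$, and whenever all $2^{|\bar\tau|-d}$ events indexed by $(d,\tau,\cdot)$ lie in $T$, $B$ puts the request $\langle d+c,\tau\rangle$ into a $B$-c.e.\ Kraft--Chaitin set $W^B$ for a constant $c$ depending only on $s$.

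To verify the Kraft budget, note that $\mathcal V^c\subseteq\bigcap_{n\in T}[V_n]^c$; independence of the $[V_n]^c$ together with $\log(1-x)\leq -x$ gives $\sum_{n\in T}2^{-f(n)}\leq -\log(1-s)=:C$. Every "full" pair $(d,\tau)$ contributes exactly $2^{|\bar\tau|-d}\cdot 2^{-|\bar\tau|}=2^{-d}$ to this sum, so $\sum_{\text{full}}2^{-d}\leq C$, and choosing $c=\lceil\log C\rceil$ keeps the weight of $W^B$ bounded by $1$. Since every $\langle d,\tau\rangle\in L$ eventually becomes full (as $\mathcal U\subseteq\mathcal V$), the Kraft--Chaitin Theorem applied to $W^B$ produces a $B$-prefix-free machine witnessing $K^B(\tau)\leq K^A(\tau)+O(1)$. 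The principal obstacle is the encoding itself: since $B$ has no direct $A$-access, the indexing of the events must simultaneously (i) give each pair $(d,\tau)$ aggregate measure $2^{-d}$ to match its Kraft weight, (ii) let the independence-based cap on $T$ translate into a bound on $B$'s request weight, and (iii) allow $B$ to computably decode $(d,\tau)$ from any event index. The bundle-of-copies construction above is the key to making all three compatible.
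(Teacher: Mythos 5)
Your proposal is correct and is essentially the paper's own argument: encode the $A$-c.e.\ Kraft--Chaitin requests as mutually independent events via Lemma~\ref{2009}, observe that the complement of their union is a $\Pi^A_1$ class of positive measure, pass to a positive-measure $\Pi^B_1$ subclass by Theorem~\ref{thm:bjorn}, extract the $B$-c.e.\ set of events disjoint from it (whose total weight is bounded by the convergence of the product), and feed the shifted requests to the Kraft--Chaitin Theorem relative to $B$. The only divergence is your bundling of each request $\langle d,\tau\rangle$ into $2^{|\bar\tau|-d}$ equal-measure events; this works but is superfluous, since Lemma~\ref{2009} applied to $f(\seq{n,\tau})=n$ already hands each pair a single independent event of measure exactly $2^{-n}$ with a computably invertible index, which is what the paper does.
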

\begin{proof}
Identifying the elements of $\omega\times 2^{<\omega}$ with natural numbers via an effective bijection, we let $V_s$, $s\in\omega$ be as guaranteed by Lemma \ref{2009} for the function $f(\seq{n,\tau})=n$. This ensures that if
$I\subseteq\omega\times 2^{<\omega}$, then $\mu\left(\bigcap_{s\in I}[V_s] ^c\right) = \prod_{\seq{n,\tau}\in I}
(1-2^{-n})$, since each $V_s$ is independent from all of the others.

Let $U^A$ be a universal prefix-free machine relative to $A$ and define 
\[
I = \{\seq{|\sigma|,\tau}\colon
U^A(\sigma)=\tau\}.
\]
Then $I$ is $A$-c.e., so $P = \bigcap_{s\in I}[V_s]^c$ is a $\Pi^A_1$ class. Note that
$\sum_{\seq{n,\tau}\in I} 2^{-n} \leq \sum_{\sigma\in\dom(U)} 2^{-|\sigma|}\leq 1$ by the Kraft inequality. Also,
$\seq{0,\tau}$ is not in $I$ for any $\tau$. So by Lemma~\ref{lem:analysis}, $\mu(P) = \prod_{\seq{n,\tau}\in I}
(1-2^{-n}) > 0$. Therefore by Theorem~\ref{thm:bjorn}, there is a $\Pi^B_1$ class $Q\subseteq P$ such that $\mu(Q)>0$.

Define $J = \{\seq{n,\tau}\colon [V_\seq{n,\tau}]\cap Q = \emptyset\}$. Note that $J$ is a $B$-c.e.~set since $Q^c$ is generated by a
$B$-c.e.~set of strings, $V_\seq{n,\tau}$ is a finite set of strings, and $[V_\seq{n,\tau}]\cap Q = \emptyset$ if and only
if $[V_\seq{n,\tau}]$ is covered by a finite set of basic intervals from $Q^c$. Also, by the comments in the first paragraph of
this proof, $\prod_{\seq{n,\tau}\in J} (1-2^{-n}) = \mu\left(\bigcap_{s\in J}[V_s]^c\right)\geq \mu(Q)>0$. Therefore by
Lemma~\ref{lem:analysis}, $\sum_{\seq{n,\tau}\in J} 2^{-n}$ converges. Furthermore, we claim that $I \subseteq J$. If
$\seq{n,\tau} \in I$, then $[V_{\seq{n,\tau}}] \cap P = \emptyset$. Since $Q \subseteq P$, $[V_{\seq{n,\tau}}] \cap Q =
\emptyset$ and hence $\seq{n,\tau} \in J$.

Since $\sum_{\seq{n,\tau}\in J} 2^{-n}$ converges, fix $c \in \omega$ such that this sum is bounded by $2^c$. Then
$\widehat{J} = \{\seq{n+c,\tau}\colon \seq{n,\tau}\in J\}$ is a Kraft--Chaitin set relative to $B$. Therefore by the
Kraft--Chaitin Theorem, 
\[
	\seq{n,\tau}\in J \implies \seq{n+c,\tau}\in \widehat{J} \implies K^B(\tau)\leq n+c+O(1)\leq
n+O(1).
\]
Since $I \subseteq J$, we have $\seq{K^A(\tau),\tau}\in J$ for each $\tau\in 2^\omega$. Thus $K^B(\tau)\leq
K^A(\tau)+O(1)$. In other words, $A\leq_\LK B$.
\end{proof}

\begin{cor}
\label{cor:equiv}
$A \leq_\LR B$ if and only if $A \leq_\LK B$.
\end{cor}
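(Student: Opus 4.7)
The corollary amounts to combining Theorem \ref{thm:LR->LK} with its easy converse. The forward direction $A \leq_\LR B \Rightarrow A \leq_\LK B$ is exactly Theorem \ref{thm:LR->LK}, so I would simply cite it; no additional argument is required and this is the step that carries all the difficulty.

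For the reverse direction $A \leq_\LK B \Rightarrow A \leq_\LR B$, the plan is to invoke the Levin--Chaitin characterization (noted in the text to coincide with Martin-L\"of randomness relative to any oracle): a real $R$ is $X$-random if and only if there exists a constant $c$ with $K^X(R[n]) \geq n - c$ for all $n$. Assuming $A \leq_\LK B$, fix $d$ such that $K^B(\tau) \leq K^A(\tau) + d$ for every $\tau \in 2^{<\omega}$. Given an arbitrary $B$-random real $R$, choose $c$ with $K^B(R[n]) \geq n - c$ for all $n$; chaining the two inequalities yields $K^A(R[n]) \geq K^B(R[n]) - d \geq n - (c+d)$ for all $n$, so $R$ is also $A$-random. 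Since every $B$-random real is $A$-random, $A \leq_\LR B$, as required.

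There is no real obstacle at this stage: the Kolmogorov-complexity characterization of Martin-L\"of randomness does all the work in the easy direction, and the hard direction was already dispatched in Theorem \ref{thm:LR->LK} using the positive-measure subclass machinery of Theorem \ref{thm:bjorn}. Packaging the two implications together yields the corollary.
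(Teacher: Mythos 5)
Your proposal is correct and matches the paper's proof: the paper likewise cites Theorem \ref{thm:LR->LK} for the forward direction and appeals to the previously noted easy implication $A \leq_\LK B \Rightarrow A \leq_\LR B$, which is exactly the Levin--Chaitin argument you spell out.
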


\begin{proof}
As noted previously, $A \leq_\LK B$ implies $A \leq_\LR B$.  Theorem \ref{thm:LR->LK} supplies the other implication.  
\end{proof}

We offer one application of Theorem \ref{thm:LR->LK} based on a special case of $\leq_\LR$ and $\leq_\LK$.  $A$ is 
\emph{low for $1$-randomness} if $A \leq_\LR \emptyset$, that is, if every random (in the measure theoretic sense) remains 
random relative to $A$.  Similarly, $A$ is called \emph{low for K} if $A \leq_\LK \emptyset$, that is, every string contains
as much information relative to $A$ as it does with no oracle.

\begin{cor}[Nies \cite{nie:05}\footnote{Yet another proof---one based on work of Hirschfeldt, Nies and Stephan \cite{hir:07}---can be found in Nies \cite{ni:09}.}]
	$A$ is low for $1$-randomness if and only if $A$ is low for $K$.
\end{cor}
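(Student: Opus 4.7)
The plan is to apply Corollary \ref{cor:equiv} directly, with $B = \emptyset$. By definition, $A$ being low for $1$-randomness is literally the statement $A \leq_\LR \emptyset$ (every $\emptyset$-random, i.e.\ Martin-L\"of random, real remains $A$-random), and $A$ being low for $K$ is literally $A \leq_\LK \emptyset$ (every string $\tau$ satisfies $K(\tau) \leq K^A(\tau) + O(1)$, which when combined with the trivial inequality $K^A(\tau) \leq K(\tau) + O(1)$ means $A$ does not help compress strings). So the corollary is just the relativization-free specialization of Corollary \ref{cor:equiv}.

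Concretely, I would write: the forward direction $A \leq_\LK \emptyset \Rightarrow A \leq_\LR \emptyset$ is the easy direction already noted in the text before Theorem \ref{thm:LR->LK} (a Kraft--Chaitin style argument shows that if $\emptyset$ compresses every string at least as well as $A$ does, then $\emptyset$-Martin-L\"of tests can simulate $A$-Martin-L\"of tests). The reverse direction $A \leq_\LR \emptyset \Rightarrow A \leq_\LK \emptyset$ is the substantive content, and is exactly Theorem \ref{thm:LR->LK} specialized to $B = \emptyset$.

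There is essentially no obstacle to overcome here: once Theorem \ref{thm:LR->LK} is in hand, this corollary is a one-line deduction. The only thing to verify is the bookkeeping that $A \leq_\LR \emptyset$ and ``low for $1$-randomness'' are the same notion, and similarly for $\LK$ and ``low for $K$'', which is immediate from the definitions. So the proof I would write is simply: \emph{This is the case $B = \emptyset$ of Corollary~\ref{cor:equiv}.}
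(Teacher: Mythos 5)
Your proof is correct and is exactly the paper's proof: the authors also derive this corollary as the instance $B = \emptyset$ of Corollary~\ref{cor:equiv}, with the easy direction noted before Theorem~\ref{thm:LR->LK} and the substantive direction being Theorem~\ref{thm:LR->LK} itself.
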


\begin{proof}
This corollary follows from Corollary \ref{cor:equiv} by setting $B = \emptyset$.
\end{proof}

\section{Preserving Measure}
\label{sec:preserving} 

In this section, we show that p.m.~domination implies u.a.e.~domination, thereby showing the equivalence of the three domination 
notions introduced in Section \ref{sec:intro}.  

\begin{lem}\label{lem:measure}
	If $A\leq_T B'$ and $A\leq_\LR B$, then every $\Pi^A_1$ class has a $\Sigma^B_2$ subclass of the same measure. 
\end{lem}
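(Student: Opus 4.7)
The plan is to establish the following quantitative refinement of Theorem~\ref{thm:bjorn}: for every rational $\epsilon > 0$, there is a $\Pi^B_1$ class $R_\epsilon \subseteq P$ with $\mu(R_\epsilon) > \mu(P) - \epsilon$. Granted this, the countable union $Q := \bigcup_{\epsilon\in\mathbb{Q}_{>0}}R_\epsilon$ is a $\Sigma^B_2$ subclass of $P$ of measure $\mu(P)$, as required.

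To construct $R_\epsilon$, I would adapt the strategy of Theorem~\ref{thm:LR->LK}. Write $P = [T]^c$ with $T$ an $A$-c.e.\ set of strings, and use Lemma~\ref{2009} to fix a mutually independent $B$-computable family $\{V_\tau\}_{\tau\in 2^{<\omega}}$ of finite string sets with $\sum_\tau \mu([V_\tau]) < \epsilon$. The $\Pi^A_1$ class
\[
\hat P \;:=\; P \cap \bigcap_{\tau\in T}[V_\tau]^c
\]
has measure at least $\mu(P) - \epsilon$ by a union bound on $P \cap \bigcup_\tau [V_\tau]$. By Theorem~\ref{thm:bjorn}, $\hat P$ has a $\Pi^B_1$ subclass $\hat Q$ of positive measure, and as in the proof of Theorem~\ref{thm:LR->LK} the set
\[
J \;:=\; \{\tau \in 2^{<\omega} : [V_\tau] \cap \hat Q = \emptyset\}
\]
is $B$-c.e.\ and contains $T$, so $[J]^c$ is already a $\Pi^B_1$ subclass of $P$.

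The hypothesis $A \leq_T B'$ enters by supplying a $B$-computable approximation $(T^{[s]})_s$ to $T$ (since $T$ is then c.e.\ in $B'$), which can be used to trim $J$ down to a $B$-c.e.\ subcover $J' \supseteq T$ for which $[J']\setminus[T]$ has measure below $\epsilon$. The $\Pi^B_1$ class $[J']^c$ is then the desired $R_\epsilon$.

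The main obstacle is the quantitative control of the spurious part $J \setminus T$. Mutual independence and the inclusion $\bigcup_{\tau\in J}[V_\tau] \subseteq \hat Q^c$ control $\sum_{\tau\in J}\mu([V_\tau])$ (by roughly $\log(1/\mu(\hat Q))$ via Lemma~\ref{lem:analysis}), but what actually bounds $\mu([J]^c)$ is the $2^{-|\tau|}$-sum over $J\setminus T$, not the $\mu([V_\tau])$-sum. Carefully tuning the measures $\mu([V_\tau])$ against the approximation $T^{[s]}$---so that the trimming step exploits $B$-effective information about $T$ to discard exactly those $\tau$ that contribute spuriously---is the delicate step where both hypotheses on $A$ must work in tandem.
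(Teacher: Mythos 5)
Your skeleton matches the paper's: build an auxiliary class from mutually independent events indexed by the generators of the complement, pull back a positive-measure $\Pi^B_1$ subclass via Theorem~\ref{thm:bjorn}, read off a $B$-c.e.\ superset $J$ of the generating set, and then use $A\leq_T B'$ to trim away the spurious part. But the two steps you flag as ``delicate'' are exactly where the proposal has genuine gaps, and the resolutions are specific ideas rather than routine bookkeeping. First, the calibration of the $V$'s. You choose $\sum_\tau \mu([V_\tau])<\epsilon$, which decouples $\mu([V_\tau])$ from $2^{-|\tau|}$; then Lemma~\ref{lem:analysis} applied to $\prod_{\tau\in J}(1-\mu([V_\tau]))\geq\mu(\hat Q)>0$ bounds only $\sum_{\tau\in J}\mu([V_\tau])$, which (as you note) says nothing about $\sum_{\tau\in J\setminus T}2^{-|\tau|}$, the quantity that actually bounds $\mu(P\setminus [J]^c)$. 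The fix is to set $\mu([V])=2^{-|\tau|}$ exactly, and to get positivity of the auxiliary class not by shrinking the $V$'s but from the Kraft inequality: taking the generating set $T=S^A$ prefix-free gives $\sum_{\tau\in S^A}2^{-|\tau|}\leq 1$, hence $\prod(1-2^{-|\tau|})>0$. With that calibration, the convergence of $\sum_{\langle\sigma,\tau\rangle\in J}2^{-|\tau|}$ is precisely the tail bound needed, and no $\epsilon$-tuning of the $V$'s is required at all.

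Second, the trimming. From $A\leq_T B'$ you get that $T$ is c.e.\ in $B'$, i.e.\ $\Sigma^0_2(B)$ --- not $\Delta^0_2(B)$ --- so a $B$-computable approximation $T^{[s]}$ only satisfies ``$\tau\in T$ iff $\tau\in T^{[s]}$ for cofinitely many $s$''; for $\tau\notin T$ you merely get $\tau\notin T^{[s]}$ infinitely often, so the natural trim $\{\tau\in J:(\exists t\geq s)\,\tau\in T^{[t]}\}$ need never discard a spurious $\tau$. The paper's device is to index the independent events by pairs $\langle\sigma,\tau\rangle$ where $\sigma$ is the use of the enumeration of $\tau$ into $S^A$: for each fixed pair, either it belongs to $I$ and survives at cofinitely many stages of the $B$-computable approximation $A_t$, or it fails at all large stages (because $\sigma$ eventually ceases to be a prefix of $A_t$, or the computation with use $\sigma$ settles). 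This per-index dichotomy is what makes the trimmed sets $T_s$ shrink down to $I$ on each finite initial segment, and the nested projections $U_s$ then give $\bigcup_s[U_s]^c$ as the $\Sigma^B_2$ class directly --- the $\epsilon$-argument appears only in verifying its measure, so the paper needs Theorem~\ref{thm:bjorn} only once. Your route also has a uniformity issue at the very first step: a union of $\Pi^B_1$ classes $R_\epsilon$ obtained by separate, non-uniform applications of Theorem~\ref{thm:bjorn} is not automatically a $\Sigma^B_2$ class, since that requires a $B$-computable listing of the pieces.
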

\begin{proof}
The proof will be similar to that of Theorem~\ref{thm:LR->LK}. Identifying now the elements of $2^{<\omega}\times 2^{<\omega}$ with natural numbers via an effective bijection, we let $\{V_s\}_{s\in\omega}$ be as guaranteed by Lemma \ref{2009} for the function $f(\seq{\sigma,\tau})=|\tau|$. As before, if 
$I\subseteq2^{<\omega}\times 2^{<\omega}$, then $\mu\left(\bigcap_{s\in I}[V_s]^c\right) = \prod_{\seq{\sigma,\tau}\in I} (1-2^{-|\tau|})$.

Let $X$ be a $\Pi^A_1$ class. Assume, without loss of generality, that $X\neq\emptyset$. Let $S^A\subseteq 2^{<\omega}$ be a prefix-free 
$A$-c.e.\ set of strings such that $X=2^\omega\smallsetminus[S^A]$; note that $S^A$ does not contain the empty string. 
Let $I = \{\seq{\sigma,\tau}\colon \tau\in S^A\text{ with use }\sigma\}$. Consider the $\Pi^A_1$ class $P = \bigcap_{s\in I}[V_s]^c$. 
Note that $\sum_{\seq{\sigma,\tau}\in I} 2^{-|\tau|} = \sum_{\tau\in S^A} 2^{-|\tau|}\leq 1$ by the Kraft inequality. So by Lemma~\ref{lem:analysis}, 
$\mu(P) = \prod_{\seq{\sigma,\tau}\in I} (1-2^{-|\tau|}) > 0$. Therefore by Theorem~\ref{thm:bjorn}, there is a $\Pi^B_1$ class $Q\subseteq P$ 
such that $\mu(Q)>0$.

Define $J = \{\seq{\sigma,\tau}\colon [V_\seq{\sigma,\tau}]\cap Q = \emptyset\}$.  As in the proof of Theorem \ref{thm:LR->LK}, 
$J$ is a $B$-c.e.~set, $I \subseteq J$, and 
$\prod_{\seq{\sigma,\tau}\in J} (1-2^{-|\tau|}) = \mu\left(\bigcap_{s\in J}[V_s]^c\right)\geq \mu(Q)>0$. Therefore by 
Lemma~\ref{lem:analysis}, $\sum_{\seq{\sigma,\tau}\in J} 2^{-|\tau|}$ converges.    

By assumption $A \leq_T B'$, so let 
$\{A_s\}_{s\in\omega}$ be a $B$-computable sequence approximating $A$. Define 
\[ 
T_s = \{\seq{\sigma,\tau}\in J\colon (\exists t\geq s)\; \tau\in S^{A_t}_t\text{ with use }\sigma\}
\] 
and let $U_s = \{\tau\colon (\exists \sigma)\; \seq{\sigma,\tau}\in T_s\}$ be the projection of $T_s$ onto the second co\,ordinate.  
$\{T_s\}_{s\in\omega}$ and $\{U_s\}_{s\in\omega}$ are $B$-computable (nested) sequences of $B$-c.e.\ sets. We claim that 
$Y = \bigcup_{s\in\omega} [U_s]^c$ is the desired $\Sigma^B_2$ class.

We claim that $S^A\subseteq U_s$ for all $s$, so $Y\subseteq X$.  Suppose $\tau \in S^A$ and fix the use $\sigma$ of this 
computation.  Then $\langle \sigma, \tau \rangle \in I$ and hence $\langle \sigma, \tau \rangle \in J$.  Because $A_s$ is a $B$-computable 
approximation to $A$, it follows that $\forall s \exists t \geq s (\tau \in S^{A_t}_t \text{ with use } \sigma)$.  
In other words, $\langle \sigma, \tau \rangle \in T_s$ for all $s$, and hence $\tau \in U_s$ for all $s$ as required.  

For each $\seq{\sigma,\tau}\in T_0\smallsetminus I$, there is a last stage $t$ such that $\sigma$ is a prefix of $A_t$, otherwise 
$\seq{\sigma,\tau}$ would be in $I$. Then $\seq{\sigma,\tau}\notin T_s$ for any $s>t$. Fix $\epsilon>0$. Take $n$ large enough that 
$\sum_{\seq{\sigma,\tau}\in J,\, \seq{\sigma,\tau}\geq n} 2^{-|\tau|} < \epsilon$ and take $s$ large enough that 
$\seq{\sigma,\tau}\in T_0\smallsetminus I$ and $\seq{\sigma,\tau}< n$ implies $\seq{\sigma,\tau}\notin T_s$. Then, 
\[ 
\mu(X\smallsetminus[U_s]^c)\leq \sum_{\tau\in U_s\smallsetminus S^A} 2^{-|\tau|}\leq \sum_{\seq{\sigma,\tau}\in 
T_s\smallsetminus I} 2^{-|\tau|}\leq \sum_{\seq{\sigma,\tau}\in J,\, \seq{\sigma,\tau}\geq n} 2^{-|\tau|} < \epsilon. 
\] 
But $\epsilon>0$ was arbitrary, so $\mu(X) = \mu(Y)$.
\end{proof}

\begin{thm}
\label{thm:equal}
	The following are equivalent:
	\begin{enumerate}
		\item $A\leq_T B'$ and $A\leq_\LR B$,
		\item Every $\Pi^A_1$ class has a $\Sigma^B_2$ subclass of the same measure,
		\item Every $\Sigma^A_2$ class has a $\Sigma^B_2$ subclass of the same measure.
	\end{enumerate}
\end{thm}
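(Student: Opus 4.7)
The plan is to close the cycle $(1)\Rightarrow(3)\Rightarrow(2)\Rightarrow(1)$. For $(1)\Rightarrow(3)$, I extend Lemma~\ref{lem:measure} by exploiting its uniformity. A $\Sigma^A_2$ class is a union $S=\bigcup_n P_n$ of uniformly $\Pi^A_1$ classes, and the construction of Lemma~\ref{lem:measure} runs uniformly in the index of the prefix-free $A$-c.e.\ set defining the class, so it produces $\Sigma^B_2$ classes $Y_n\subseteq P_n$ with $\mu(Y_n)=\mu(P_n)$ uniformly in $n$. Then $Y=\bigcup_n Y_n$ is $\Sigma^B_2$, contained in $S$, and satisfies $\mu(S\setminus Y)\leq\sum_n\mu(P_n\setminus Y_n)=0$. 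The implication $(3)\Rightarrow(2)$ is immediate since $\Pi^A_1\subseteq\Sigma^A_2$.

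For $(2)\Rightarrow(1)$, I derive the two conjuncts separately. For $A\leq_\LR B$: given any $\Pi^A_1$ class $P$ of positive measure, (2) provides a $\Sigma^B_2$ class $Y\subseteq P$ with $\mu(Y)=\mu(P)>0$; writing $Y=\bigcup_k Q_k$ as a union of uniformly $\Pi^B_1$ classes, some $Q_k$ must have positive measure and so serves as a $\Pi^B_1$ subclass of $P$ of positive measure, and Theorem~\ref{thm:bjorn} finishes.

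For $A\leq_T B'$ I use the following encoding. For each $A$-computable set $T\subseteq\omega$, the class $X_T=2^\omega\setminus\bigcup_{n\in T}[0^n1]$ is $\Pi^A_1$; apply (2) to get a $\Sigma^B_2$ class $Y_T\subseteq X_T$ with $\mu(Y_T)=\mu(X_T)$. Since the cylinders $[0^n1]$ are pairwise disjoint and their total measure is $1$, the equality $\mu(Y_T)=\mu(X_T)=\sum_{n\notin T}2^{-n-1}$ forces $\mu(Y_T\cap[0^n1])=0$ when $n\in T$ and $\mu(Y_T\cap[0^n1])=2^{-n-1}$ when $n\notin T$; in particular, $n\in T$ iff $\mu(Y_T\cap[0^n1])=0$. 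Taking $T=A$ exhibits $A$ as a $\Pi^B_2$ set and taking $T=\omega\setminus A$ exhibits $A$ as a $\Sigma^B_2$ set, so $A\in\Delta^B_2$ and $A\leq_T B'$. The main technical step is verifying that ``$\mu(Y_T\cap[0^n1])>0$'' is $\Sigma^B_2$ uniformly in $n$: decomposing $Y_T=\bigcup_k Q_k$ into uniformly $\Pi^B_1$ pieces, this reduces to the fact that positivity of measure of a $\Pi^B_1$ class is $\Sigma^B_2$ (witnessed by a positive rational lower bound on $1-\mu([S^B])$, where $[S^B]$ is the $\Sigma^B_1$ open complement, that persists at every stage of the enumeration). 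This quantifier count is the only delicate point; the rest of the argument is routine.
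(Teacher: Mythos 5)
Your cycle $(1)\Rightarrow(3)\Rightarrow(2)\Rightarrow(1)$ is sound in outline and two of its three legs are correct: $(3)\Rightarrow(2)$ is immediate, and your $(2)\Rightarrow(1)$ is essentially the paper's argument viewed from the complement side (the paper covers $\bigcup_{n\in A}[0^n1]$ from outside by a $\Pi^B_2$ class of the same measure and tests the $\Pi^B_2$ predicate ``$[0^n1]\subseteq Q$''; you work with the complementary $\Pi^A_1$ class and test positivity of $\mu(Y\cap[0^n1])$, and your quantifier count showing that positivity of the measure of a $\Sigma^B_2$ class is $\Sigma^B_2$ is right).

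The gap is in $(1)\Rightarrow(3)$. You apply Lemma~\ref{lem:measure} to each $P_n$ separately and assert that the construction is uniform in $n$, which is exactly what is needed for $\bigcup_n Y_n$ to be $\Sigma^B_2$ rather than merely a countable union of $\Sigma^B_2$ classes. Most of Lemma~\ref{lem:measure} is indeed uniform (the sets $I$, $J$, $T_s$, $U_s$ and the auxiliary class $P$), but the one step that is not obviously so is the invocation of Theorem~\ref{thm:bjorn} to produce the $\Pi^B_1$ class $Q\subseteq P$ of positive measure: that theorem is used as a black box, and neither its statement nor anything in this paper provides an effective, uniform passage from an index for $P$ to a $B$-index for $Q$. (It is plausibly uniform given a rational lower bound on $\mu(P)$, which is available here from the Kraft inequality, but verifying this would require opening up Kjos-Hanssen's proof, which you do not do.) The paper sidesteps the issue entirely by deriving (iii) from (ii) rather than from (i): it packs the whole union $\bigcup_i X_i$ into the single $\Pi^A_1$ class $\{0^i1\cat\alpha : \alpha\in X_i\}$, applies (ii) once, and then unpacks, using countable additivity to show that each slice $Y_i$ of the resulting $\Sigma^B_2$ class exhausts the measure of $X_i$. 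That is the same cylinder coding you already use in your $(2)\Rightarrow(1)$ argument, and it repairs your $(1)\Rightarrow(3)$ with a single, non-uniform application of Lemma~\ref{lem:measure}; I would recommend it over chasing the uniformity of Theorem~\ref{thm:bjorn} through an external proof.
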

\begin{proof}
	(i)$\implies$(ii) is Lemma~\ref{lem:measure}.

(ii)$\implies$(iii): Let $W$ be a $\Sigma^A_2$ class. So $W = \bigcup_{i\in\omega} X_i$ for $\Pi^A_1$ classes $\{X_i\}_{i\in\omega}$. 
Consider the $\Pi^A_1$ class $X = \{0^i1\cat\alpha\colon i\in\omega\text{ and }\alpha\in X_i\}$. By (ii), there is a $\Sigma^B_2$ class 
$Y\subseteq X$ such that $\mu(Y)=\mu(X)$. For each $i$, let $Y_i = \{\alpha\colon 0^i1\cat\alpha\in Y\}$. So, $Y_i$ is a $\Sigma^B_2$ 
class and $Y_i\subseteq X_i$ for all $i$. Clearly $\mu(Y_i)\leq\mu(X_i)$. If $\mu(Y_i)<\mu(X_i)$ for some $i$, then 
$\mu(Y)=\sum_{i\in\omega}2^{i+1}\mu(Y_i)< \sum_{i\in\omega}2^{i+1}\mu(X_i)=\mu(X)$, which is a contradiction. Therefore, 
$\mu(Y_i) = \mu(X_i)$ for all $i$. Let $Z = \bigcup_{i\in\omega} Y_i$. So $Z$ is a $\Sigma^B_2$ class and $Z\subseteq W$. Furthermore, 
$\mu(W\smallsetminus Z)\leq \sum_{i\in\omega}\mu(X_i\smallsetminus Y_i) = 0$, so $\mu(Z)=\mu(W)$.

(iii)$\implies$(i): Suppose that every $\Sigma^A_2$ class has a $\Sigma^B_2$ subclass of the same measure.  First, we show that 
$A \leq_\LR B$.  By Theorem \ref{thm:bjorn}, it suffices to show that if $P$ is a $\Pi^A_1$ class of positive measure, then $P$ has a 
$\Pi^B_1$ subclass of positive measure.  By assumption, $P$ has a $\Sigma^B_2$ subclass $Q = \bigcup_{i \in \omega} Q_i$ of 
positive (in fact the same) measure.  At least one of the $\Pi^B_1$ classes $Q_i \subseteq Q \subseteq P$ must have positive 
measure.  

Next, we show that $A \leq_T B'$.  Let $\sigma_n = 0^n1$ and consider the $\Sigma^A_1$ class $U = \bigcup_{n \in A} [\sigma_n]$.  
Since $U$ is a $\Sigma^A_1$ (and hence a $\Pi^A_2$) class, by (iii) 
there is a $\Pi^B_2$ class $Q$ such that $U \subseteq Q$ and $\mu(Q) = \mu(U) = \sum_{n \in A} 
2^{-(n+1)}$.  We claim that $n \in A$ if and only if $[\sigma_n] \subseteq Q$.  If $n \in A$, then $[\sigma_n] \subseteq U \subseteq Q$.  
On the other hand, if $n \notin A$ and $[\sigma_n] \subseteq Q$, then $\mu(Q) \geq \sum_{i \in A} 2^{-(i+1)} + 2^n > \mu(U)$ which 
is a contradiction.  Writing $Q = \bigcap_{k \in \omega} Q_k$ where each $Q_k$ is $\Sigma^B_1$, we have 
\[
n \in A \, \Leftrightarrow \, [\sigma_n] \subseteq Q \,\Leftrightarrow \forall k ([\sigma_n] \subseteq Q_k).
\]
Since $[\sigma_n] \subseteq Q_k$ is a $\Sigma^B_1$ relation, these equivalences show that $A$ is $\Pi^B_2$.  However, the same 
argument with the $\Sigma^A_1$ class $\bigcup_{n \notin A} [\sigma_n]$ shows that $\overline{A}$ is $\Pi^B_2$ as well, 
and hence $A \leq_T B'$.   
\end{proof}

We cannot remove the condition that $A \leq_T B'$ from Theorem \ref{thm:equal}. Indeed, there is a $B$ for which uncountably many $A$ satisfy $A\le_{LR}B$ (see Barmpalias, Lewis, and Soskova \cite{bar:2008}), whereas for each $B$ there are only countably many $A$ with $A \leq_T B'$.  

\begin{cor}
\label{cor:pmd}
For all $B$, the following are equivalent:
\begin{enumerate}
\item[(1)] $B$ is uniformly almost everywhere dominating,
\item[(2)] $B$ is almost everywhere dominating, 
\item[(3)] $B$ is positive measure dominating, and 
\item[(4)] $\emptyset' \leq_\LR B$.
\end{enumerate}
\end{cor}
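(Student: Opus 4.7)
The plan is to close the loop by producing the one implication that is not already in the excerpt, namely $(4)\Rightarrow(1)$. The implications $(1)\Rightarrow(2)\Rightarrow(3)$ are essentially by definition (a single dominating function witnesses a.e.~domination, and the $\Pi^0_2$--$\Pi^B_1$ subclass formulation of a.e.~domination from Theorem~\ref{thm:aeequiv} trivially yields the p.m.~version), while $(3)\Leftrightarrow(4)$ is Corollary~\ref{cor:bjorn}. So the only content to supply is $\emptyset'\leq_\LR B \Rightarrow B$ is u.a.e.~dominating.

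For this I would simply apply Theorem~\ref{thm:equal} with $A=\emptyset'$. Both hypotheses in condition~(i) of that theorem hold: $\emptyset'\leq_\LR B$ is exactly our assumption~(4), and $\emptyset'\leq_T B'$ is automatic since $B'$ always computes $\emptyset'$. Consequently condition~(ii) of Theorem~\ref{thm:equal} applies, so every $\Pi^{\emptyset'}_1$ class has a $\Sigma^B_2$ subclass of the same measure. Identifying $\Pi^{\emptyset'}_1$ classes with $\Pi^0_2$ classes in the usual way, this says precisely that for every $\Pi^0_2$ set $P \subseteq 2^\omega$ there is a $\Sigma^B_2$ set $Q \subseteq P$ with $\mu(Q)=\mu(P)$. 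That is condition~(2) of Theorem~\ref{thm:gdeltaequiv}, which is equivalent to $B$ being u.a.e.~dominating.

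There is no real obstacle here; the corollary is essentially a bookkeeping consequence of Theorem~\ref{thm:equal}, Corollary~\ref{cor:bjorn}, and Theorem~\ref{thm:gdeltaequiv}. The only small point one should mention is the trivial verification $\emptyset'\leq_T B'$, since without it the application of Theorem~\ref{thm:equal} would not go through; this is precisely the ``extra'' hypothesis whose inevitability when $A=\emptyset'$ is what lets us remove it from the statement of the final corollary.
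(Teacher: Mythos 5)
Your reduction to the single implication (4)$\Rightarrow$(1) is right, and so is the observation that $\emptyset'\leq_T B'$ holds automatically, which is what lets Theorem~\ref{thm:equal} be applied with $A=\emptyset'$. The gap is in your last step: you cannot ``identify $\Pi^{\emptyset'}_1$ classes with $\Pi^0_2$ classes in the usual way.'' That identification is valid for subsets of $\omega$ (where $\Pi^0_2=\Pi^{\emptyset'}_1$ by relativization), but not for classes of reals: a $\Pi^{\emptyset'}_1$ class is topologically closed, whereas a $\Pi^0_2$ class is merely $G_\delta$. For instance, the class of sets with infinitely many $1$'s is $\Pi^0_2$ but not closed, hence not $\Pi^Y_1$ for any oracle $Y$. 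Every $\Pi^{\emptyset'}_1$ class is $\Pi^0_2$, but the converse---which is the direction you need---fails, so condition~(ii) of Theorem~\ref{thm:equal} does not directly apply to an arbitrary $\Pi^0_2$ class $P$.

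The missing ingredient is Kurtz's theorem, quoted in Section~\ref{sec:intro}: every $\Pi^0_2$ class $P$ has a $\Sigma^{\emptyset'}_2$ subclass $\hat{P}$ with $\mu(\hat{P})=\mu(P)$. This is precisely what bridges the topological mismatch. One then applies condition~(iii) (rather than (ii)) of Theorem~\ref{thm:equal} to $\hat{P}$ to obtain a $\Sigma^B_2$ class $Q\subseteq\hat{P}\subseteq P$ with $\mu(Q)=\mu(\hat{P})=\mu(P)$, and concludes via Theorem~\ref{thm:gdeltaequiv} exactly as you intended. With that one insertion your argument coincides with the paper's proof; without it, the step from $\Pi^{\emptyset'}_1$ classes to $\Pi^0_2$ classes is simply false.
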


\begin{proof}
As noted in Section \ref{sec:intro}, 
we have (1) implies (2), (2) implies (3), and (3) if and only if (4).  It remains to show that (4) implies (1).  
Suppose $\emptyset' \leq_\LR B$. Since $\emptyset' \leq_T B'$,
Theorem \ref{thm:equal} tells us that every $\Sigma^{\emptyset'}_2$ class has a $\Sigma^B_2$ subclass of the same measure.  
By Theorem \ref{thm:gdeltaequiv}, to show that $B$ is uniformly almost everywhere dominating, it suffices to show that 
every $\Pi^0_2$ class has a $\Sigma^B_2$ subclass of the same measure.  Fix a $\Pi^0_2$ class $P$.  By 
Kurtz \cite{kur:81}, $P$ contains a $\Sigma^{\emptyset'}_2$ 
subclass $\hat{P}$ such that $\mu(\hat{P}) = \mu(P)$.  But, $\hat{P}$ contains a 
$\Sigma^B_2$ subclass $Q$ of the same measure and hence $Q \subseteq \hat{P} \subseteq P$ and $\mu(Q) = \mu(\hat{P}) 
= \mu(P)$ as required.
\end{proof}

Our second corollary of Theorem \ref{thm:equal} involves the notions of low for weak $2$-randomness and low for weak 
$2$-random tests.  A \emph{generalized Martin-L\"{o}f test} is a computable nested 
sequence of $\Sigma^0_1$ classes $U_0 \supseteq U_1 \supseteq \cdots$ such that $\mu(\bigcap_{i \in \omega} U_i) = 0$.  That is, 
a generalized Martin-L\"{o}f test is a Martin-L\"{o}f test with the restriction that $\mu(U_i) \leq 2^{-i}$ loosened.
Note that if $\{U_i\}_{i\in\omega}$ is a generalized Martin-L\"{o}f test, then $\bigcap_{i \in \omega} U_i$ is a
$\Pi^0_2$ class of measure $0$, and conversely, that any $\Pi^0_2$ class of measure $0$ can be viewed as a
generalized Martin-L\"{o}f test.  A set $X$ is 
\emph{weakly $2$-random} if $X \notin \bigcap_{i \in \omega} U_i$ for all generalized Martin-L\"{o}f tests.  
Notice that all weakly $2$-random sets are $1$-random.

We say that $A$ is \emph{low for weak $2$-randomness} if every set $X$ that is weakly $2$-random is also weakly $2$-random relative to $A$.  In other words, if $X \notin \bigcap_{i \in \omega} U_i$ for all generalized Martin-L\"{o}f tests, then $X \notin \bigcap_{i \in \omega} V_i^A$ for all generalized Martin-L\"{o}f tests relative to $A$.  Because weak $2$-randomness has been defined in 
terms of tests, it is possible to give a more uniform version of this condition.  $A$ is \emph{low for weak $2$-random tests} if for 
every generalized Martin-L\"{o}f test $\bigcap_{i \in \omega} V_i^A$ relative to $A$, there is a generalized Martin-L\"{o}f test 
$\bigcap_{i \in \omega} U_i$ such that $\bigcap_{i \in \omega} V_i^A \subseteq \bigcap_{i \in \omega} U_i$.  It follows immediately 
that if $A$ is low for weak $2$-random tests, then $A$ is low for weak $2$-randomness.  

\begin{cor}
\label{cor:weak}
If $A$ is low for $1$-randomness, then $A$ is low for weak $2$-random tests.
\end{cor}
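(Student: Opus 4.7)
The plan is to invoke Theorem~\ref{thm:equal} with $B = \emptyset$. The hypothesis $A \leq_\LR \emptyset$ is immediate from $A$ being low for $1$-randomness; the other hypothesis I would need, namely $A \leq_T \emptyset'$, I would justify by appealing to the standard fact that every low-for-$1$-random set is $K$-trivial and hence $\Delta^0_2$. This is the one external ingredient the argument calls for.

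Given a generalized Martin-L\"of test $\{V_i^A\}_{i \in \omega}$ relative to $A$, the strategy is to dualize and then dualize back. The complement $W = \bigcup_{i\in\omega} (V_i^A)^c$ is a $\Sigma^A_2$ class of measure $1$, so Theorem~\ref{thm:equal}(iii) supplies a $\Sigma^0_2$ subclass $Z \subseteq W$ with $\mu(Z) = 1$. I would then write $Z$ as an increasing union of $\Pi^0_1$ classes $Z_i$ and set $U_i = 2^\omega \smallsetminus Z_i$; these are nested decreasing $\Sigma^0_1$ classes whose intersection $2^\omega \smallsetminus Z$ is a $\Pi^0_2$ null class, so $\{U_i\}_{i\in\omega}$ is a (plain) generalized Martin-L\"of test. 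From $Z \subseteq W$ one obtains $\bigcap_{i\in\omega} V_i^A = W^c \subseteq Z^c = \bigcap_{i\in\omega} U_i$ by complementation, which is exactly the defining property of $A$ being low for weak $2$-random tests.

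The substantive content is entirely packaged inside Theorem~\ref{thm:equal}; the only conceptual step left is the dictionary between generalized Martin-L\"of tests (nested intersections of $\Sigma^0_1$ classes of measure zero) and $\Sigma^0_2$ classes of full measure. The main obstacle, therefore, is not within the proof itself but in verifying the $A\leq_T\emptyset'$ hypothesis, and this is absorbed by the known structural theory of low-for-random sets.
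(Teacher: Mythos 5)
Your proof is correct and follows essentially the same route as the paper: apply Theorem~\ref{thm:equal} with $B=\emptyset$ and dualize between $\Sigma^0_2$ classes of full measure and $\Pi^0_2$ null classes (i.e., generalized Martin-L\"of tests). The only cosmetic difference is that the paper secures $A\leq_T\emptyset'$ by citing that every low for $1$-random set is low ($A'\leq_{tt}\emptyset'$) rather than going through $K$-triviality, but both are equivalent appeals to the known structural theory.
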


\begin{proof}
Suppose that $A$ is low for $1$-randomness, that is, $A \leq_\LR \emptyset$.  Since every low for $1$-random set is low (that is, 
$A' \leq_T \emptyset'$, in fact, even $A' \leq_{tt} \emptyset'$), $A$ satisfies the conditions in Theorem \ref{thm:equal}(i) with $B = \emptyset$.  
Therefore, every $\Sigma^A_2$ class has a $\Sigma^0_2$ subclass of the same measure.  In particular, every 
$\Pi^A_2$ class of measure $0$ is contained in a $\Pi^0_2$ class of measure $0$.  In other words, every generalized 
Martin-L\"{o}f test relative to $A$ is contained in a generalized Martin-L\"{o}f test as required.
\end{proof}

Downey, Nies, Weber and Yu \cite{dow:06} proved one implication between low for $1$-randomness and low for 
weak $2$-randomness.

\begin{thm}[Downey, Nies, Weber and Yu \cite{dow:06}]
\label{thm:weak}
	If $A$ is low for weak $2$-randomness, then $A$ is low for $1$-randomness.
\end{thm}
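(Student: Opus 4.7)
The plan is to prove Theorem \ref{thm:weak} by contraposition: assume $A$ is not low for $1$-randomness and construct a weakly $2$-random real $Y$ that fails to be weakly $2$-random relative to $A$.

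The first step is a convenient reformulation: $A$ is low for weak $2$-randomness if and only if every weakly $2$-random real is $A$-random. One direction is immediate. For the other, given any $\Pi^A_2$ null class $T = \bigcap_k V_k^A$ in a decreasing $\Sigma^A_1$ presentation, the measures $\mu(V_k^A)$ tend to zero, so one can subselect a subsequence with $\mu(V_{k_n}^A) \leq 2^{-n}$ to exhibit $T$ as the intersection of an $A$-Martin-L\"of test. This places $T$ inside the $\Pi^A_2$ class of non-$A$-random reals, so if every weakly $2$-random is $A$-random then no weakly $2$-random lies in any $\Pi^A_2$ null class. Hence the contrapositive goal reduces to: produce a weakly $2$-random that is not $A$-random.

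The hypothesis supplies (directly) a $1$-random $X$ that is not $A$-random, and (via Theorem \ref{thm:bjorn}) a $\Pi^A_1$ class $P$ of positive measure with no $\Pi^0_1$ subclass of positive measure. The natural attempt is a van Lambalgen--style upgrade: set $Y = X \oplus Z$ where $Z$ is weakly $2$-random relative to $X$. Relativized van Lambalgen forces $Y$ to be not $A$-random (since $X$ is not). For weak $2$-randomness of $Y$, each $\Pi^0_2$ null class $T$ pulls back under the join to $T_X = \{Z' : X \oplus Z' \in T\}$, which is $\Pi^X_2$ and $\mu$-null for $\mu$-a.e.\ $X$ by Fubini; weak $2$-randomness of $Z$ over $X$ then gives $Z \notin T_X$ and hence $Y \notin T$.

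The main obstacle is ensuring Fubini-genericity of our chosen $X$, since an arbitrary $1$-random need not satisfy $\mu(T_X) = 0$ for every $\Pi^0_2$ null $T$. The natural remedy is to select $X$ within the class $P$ of Theorem \ref{thm:bjorn} to additionally be weakly $2$-random, which is possible because $\mu(P) > 0$ and the weakly $2$-randoms have measure one; any such $X$ is automatically Fubini-generic. If moreover such an $X \in P$ can be chosen not $A$-random, then $X$ itself is the desired witness and no van Lambalgen step is needed. The delicate subcase, where every weakly $2$-random in $P$ is $A$-random, would be resolved by exploiting the structural property that $P$ has no $\Pi^0_1$ positive-measure subclass to force $A$-nonrandomness on a weakly $2$-random real in $P$ or in its join-extensions---this is the core technical step of the argument, and pinning it down is where the real work of the theorem lies.
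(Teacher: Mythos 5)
The paper does not actually prove this statement---it is quoted from Downey, Nies, Weber and Yu---so the only question is whether your argument stands on its own, and it does not: by your own admission the ``core technical step'' is left open, and that step is essentially the entire theorem. Your reduction of the contrapositive to ``produce a weakly $2$-random real that is not $A$-random'' is fine (it uses only the trivial fact that an $A$-Martin-L\"of test is a generalized $A$-Martin-L\"of test), but none of the tools you assemble can produce such a real. The non-$A$-random reals form a null class, so the fact that the weakly $2$-randoms have full measure (inside $P$ or anywhere else) can never place a weakly $2$-random among them; membership in the class $P$ supplied by Theorem~\ref{thm:bjorn} does not make a real non-$A$-random, since $P$ has positive measure and is therefore full of $A$-randoms; and the van Lambalgen join only relocates the problem, because $X\oplus Z$ fails to be $A$-random exactly when $X$ is not $A$-random or $Z$ is not $(A\oplus X)$-random, while your Fubini argument simultaneously requires $X$ to be weakly $2$-random and $Z$ to be weakly $2$-random relative to $X$---so one of the two halves must already be a (relativized) solution to the original problem. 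What is needed is a construction that converts the combinatorial failure of $A\leq_\LR\emptyset$ (for instance, the class $P$ with no positive-measure $\Pi^0_1$ subclass) into a single weakly $2$-random, non-$A$-random real; that is the actual content of the Downey--Nies--Weber--Yu argument, and it does not fall out of Fubini plus van Lambalgen.

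A secondary error: in your reformulation you claim that a null $\Pi^A_2$ class $\bigcap_k V^A_k$ can be exhibited as the intersection of an $A$-Martin-L\"of test by passing to a subsequence with $\mu(V^A_{k_n})\leq 2^{-n}$. The map $n\mapsto k_n$ is not $A$-computable (verifying $\mu(V^A_k)\leq 2^{-n}$ is a $\Pi^A_1$ condition), and if this step worked it would show that weak $2$-randomness relative to $A$ coincides with $1$-randomness relative to $A$, which is false. You only use the other (correct) direction of that equivalence in the reduction, so this does not damage the overall plan, but the claim should be removed.
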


Combining Corollary \ref{cor:weak} and Theorem \ref{thm:weak} together with the fact that low for weak $2$-random tests 
implies low for weak for $2$-randomness yields the following corollary.  

\begin{cor}
\label{cor:combine}
For any set $A$, the following conditions are equivalent:
\begin{enumerate}
\item[(1)] $A$ is low for $1$-randomness, 
\item[(2)] $A$ is low for weak $2$-random tests, and
\item[(3)] $A$ is low for weak $2$-randomness.
\end{enumerate}
\end{cor}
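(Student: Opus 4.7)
The plan is simply to close the cycle $(1)\Rightarrow(2)\Rightarrow(3)\Rightarrow(1)$ using results already in hand. Corollary~\ref{cor:weak} gives $(1)\Rightarrow(2)$: if $A$ is low for $1$-randomness, then $A$ is low for weak $2$-random tests. The implication $(2)\Rightarrow(3)$ was already observed in the paragraph defining the two notions, and is immediate: if every generalized Martin-L\"of test relative to $A$ is contained in a generalized Martin-L\"of test, then any set that passes all unrelativized tests automatically passes all $A$-relative tests. Finally, Theorem~\ref{thm:weak} of Downey, Nies, Weber and Yu supplies $(3)\Rightarrow(1)$.

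So the proof is essentially a one-line citation chain, and I would write it as such: assemble the three implications, observe that they form a cycle through the three conditions, and conclude equivalence. There is no real obstacle here — all the substantive content has been isolated in Corollary~\ref{cor:weak} (whose proof used Theorem~\ref{thm:equal} to replace a $\Pi^A_2$ null class by a $\Pi^0_2$ null superclass) and in Theorem~\ref{thm:weak}. The corollary itself is bookkeeping.

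If I wanted to be a little more informative for the reader, I might restate the trivial implication $(2)\Rightarrow(3)$ in one sentence: a generalized Martin-L\"of test relative to $A$ whose intersection is contained in the intersection of some unrelativized generalized Martin-L\"of test cannot separate a weakly $2$-random real from its $A$-relativized counterpart, since any $X$ avoiding the larger intersection automatically avoids the smaller one. This makes the cycle fully self-contained modulo the cited results, which is probably all that is expected of a corollary presented this way.
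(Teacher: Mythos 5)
Your proposal is correct and matches the paper's own argument exactly: the paper likewise assembles Corollary~\ref{cor:weak} for (1)$\Rightarrow$(2), the immediate observation that low for weak $2$-random tests implies low for weak $2$-randomness for (2)$\Rightarrow$(3), and Theorem~\ref{thm:weak} for (3)$\Rightarrow$(1). Nothing further is needed.
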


Corollary \ref{cor:combine} can also be proved using the \emph{golden run} machinery of Nies \cite{nie:05}. This was discovered 
independently, and earlier, by Nies and a proof along these lines is given in Nies \cite{ni:09}.

\section{Measure Definitions in Reverse Mathematics}
\label{sec:measuredef}

In the remainder of this paper, we consider the reverse mathematics question of how difficult it is to prove $\Positive \rightarrow \Gdelta$. We begin with definitions of codes for open, closed, $G_{\delta}$ and $F_{\sigma}$ subsets of $2^{\mathbb{N}}$ in \RCA. (We switch from $\omega$ to $\mathbb{N}$ as it is standard to use $\mathbb{N}$ to denote the first order part of any given model of second order arithmetic.)

A \textit{code for an open set in} $2^{\mathbb{N}}$ is a set $O \subseteq 2^{< \mathbb{N}}$.  We can 
assume without loss of generality that $O$ is prefix free.  
We write $X \in [O]$ (and say that $X$ is in the set coded by $O$) if 
there is a string $\tau \in O$ such that $t \sqsubseteq X$.  It is often useful to think of an open set 
as the union of a sequence of clopen sets.  For $t \in \mathbb{N}$, we let 
$O_t = \{ \tau \in O \mid |\tau| < t \}$ and note that $[O] = \bigcup_t [O_t]$.  

Equivalently, we can specify an open set by a $\Sigma^0_1$ formula (allowing parameters) $\exists s \varphi(x)$, where $\varphi(x)$ contains only bounded 
quantifiers.  In this context, we say that $X$ is in the coded open set if $\exists s \varphi(X[s])$.  Later it 
will be convenient to think of the collection of strings 
satisfying (or enumerated by) such a formula even though this collection need not be a set in $\RCA$.  We use the term $\Sigma^0_1$ class of strings (or simply 
$\Sigma^0_1$ class, relying on context to differentiate between this notion of class and the one used in the context of sets of reals) to denote 
the collection of strings satisfying a particular $\Sigma^0_1$ formula.  This terminology allows us to use set notation for such collections, although any such 
statement is understood as standing for the appropriate translation of the defining formulas.  If $O$ is 
the $\Sigma^0_1$ class of strings corresponding to the formula $\exists s \varphi(x)$, then 
$O_t = \{ \tau \mid |\tau| < t \wedge \exists s < t \, \varphi(\tau) \}$.  As above, each $O_t$ is clopen and 
$[O] = \bigcup_t [O_t]$.  In this context, we cannot assume that the $\Sigma^0_1$ class of strings $O$ is 
prefix free.  However, abusing notation, we can assume (by removing strings from $O_t$ in a 
uniform manner) that the finite sets $O_t$ are prefix free.

In systems weaker than $\ACA$, we cannot assume that bounded increasing 
sequences of rationals converge.  Therefore, rather than assuming that open sets 
have definite measures, we work with comparative statements such as $\mu(O) \geq q$ for 
$q \in \mathbb{Q}$.  To define these notions in \RCA, 
let $O$ be a (prefix-free) code for an open set.  For $t \in \mathbb{N}$, define 
$\mu(O_t) = \sum_{\tau \in O_t} 2^{-|\tau|}$, and for $q \in \mathbb{Q}$, define 
\begin{gather*}
\mu(O) \leq q \, \Leftrightarrow \, \forall t \, (\mu(O_t) \leq q) \\
\mu(O) > q \, \Leftrightarrow \, \exists t \, (\mu(O_t) > q) \\
\mu(O) \geq q \, \Leftrightarrow \, \forall r \in \mathbb{Q} (r < q \rightarrow \mu(O) > r)
\end{gather*}
Thus, $\mu(O) \leq q$ is a $\Pi^0_1$ statement (with parameter $O$), $\mu(O) > q$ is a $\Sigma^0_1$ statement, and $\mu(O) \geq q$ is a 
$\Pi^0_2$ statement.   However, if $\lim_{t \rightarrow \infty} \mu(O_t)$ is irrational, 
then $\mu(O) \geq q \Leftrightarrow \mu(O) > q$, and hence $\mu(O) \geq q$ is a 
$\Sigma^0_1$ expression.  

We specify a \emph{closed set} by giving a code $O$ for its complement as an open set and we write 
$X \in [O]^c$ if for all $\tau \in O$, $\tau \not \sqsubseteq X$.  (Equivalently, we can 
specify a closed set by a $\Pi^0_1$ formula $\forall s \varphi(x)$ and say that $X$ is in the closed set if 
$\forall s \varphi(X[s])$.)  
We say $\mu([O]^c) \geq q$ if $\mu([O]) \leq 1-q$, and similarly for the other inequalities. 

A \emph{code for a} $G_{\delta}$ set is a sequence $G = \langle G_k \mid k \in \mathbb{N} \rangle$ 
such that each $G_k$ is a code for an open set and we write $X \in [G]$ if  
for every $k$, there is a string $\tau_k \in G_k$ such that $\tau_k \sqsubseteq X$.  We 
frequently abuse notation and simply write $G = \bigcap_{k \in \mathbb{N}} G_k$.  (Equivalently, 
we can specify a $G_{\delta}$ set by a $\Pi^0_2$ formula $\forall n \exists s \varphi(x)$ and say that 
$X$ is in the coded set if $\forall n \exists s \varphi(X[s])$.)  

To define our measure 
inequalities for $G$, we form the sequence of open sets $\langle G^n \mid n \in 
\mathbb{N} \rangle$ where $G^n = \bigcap_{k = 0}^{n} G_k$.  Notice that $G^1 \supseteq 
G^2 \supseteq \cdots$ and that classically, $\mu(G) = \lim_n \mu(G^n)$.  For all $q \in 
\mathbb{Q}$, we define 
\begin{gather*}
\mu(G) \leq q \, \Leftrightarrow \, \forall r \in \mathbb{Q} ( r > q \rightarrow \exists n (\mu(G^n) \leq r) \\
\mu(G) \geq q \, \Leftrightarrow \, \forall n (\mu(G^n) \geq q)
\end{gather*}
Thus, $\mu(G) \leq q$ is a $\Pi^0_3$ statement and $\mu(G) \geq q$ is a $\Pi^0_2$ 
statement.    However, if $\lim_{n \rightarrow \infty} 
\mu(G^n)$ is irrational, then $\mu(G) \leq q \Leftrightarrow \exists n (\mu(G^n) \leq q)$ 
and hence $\mu(G) \leq q$ is a $\Sigma^0_2$ statement.  

A \emph{code for an} $F_{\sigma}$ set is also sequence $F = \langle F_n \mid n \in \mathbb{N} \rangle$ 
such that each $F_n$ is a code for an open set.  $F$ codes the union of the closed 
sets $[F_n]^c$: $X \in [F]$ if there is an $n$ such that $X \in [F_n]^c$.  (Equivalently, we can specify an 
$F_{\sigma}$ set by a $\Sigma^0_2$ formula $\exists n \forall s \varphi(x)$ and say that $X$ is in the coded set if 
$\exists n \forall s (\varphi(X[s]))$.)  We define the measure inequalities for 
an $F_{\sigma}$ set from the measure inequalities for its $G_{\delta}$ complement.  

When working in subsystems below \ACA, we regard a measure theoretic statement such as $\mu(G) = \mu(F)$ 
as an abbreviation for the sentence stating that for all $q \in \mathbb{Q}$, $\mu(G) \geq q$ if and only if 
$\mu(F) \geq q$.  That is, we do not assume that the measures converge to reals in the models for the weak subsystems.    

\section{Working in \REC}
\label{sec:rec}

In this section we work in $\REC$, the $\omega$-model consisting of the computable sets. A $G_{\delta}$ set in this model is called a \emph{computable} $G_{\delta}$ set. Our goal is to show that $\REC \notmodels \Gepsilon \rightarrow \Gdelta$ and hence that $\RCA\nvdash \Gepsilon \rightarrow \Gdelta$. Therefore, $\RCA\nvdash \Positive \rightarrow \Gdelta$.

First we show that $\REC \notmodels \Gdelta$. This follows from the existence of a computable $G_\delta$ with measure different from that of every computable $F_\sigma$ set, which in turn, follows easily from the existence of a set that is $\Pi^0_2$ but not $\Sigma^0_2$. Recall that if $G$ is a computable $G_{\delta}$ set and $q\in\Q$, then $\mu(G) \geq q$ is a $\Pi^0_2$ statement.

\begin{prop}
\label{prop:gdelta}
There is a computable $G_\delta$ set $G$ such that $\{q\in\Q\mid \mu(G)\geq q\}$ is not $\Sigma^0_2$.
\end{prop}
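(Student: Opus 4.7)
The plan is to encode a $\Pi^0_2$ set $A \subseteq \N$ that is not $\Sigma^0_2$ into the measure of a computable $G_\delta$, so that recovering $A$ from that measure becomes a $\emptyset'$-decidable task. Fix such an $A$, written as $A = \{n : \forall s\, \exists t\, R(n,s,t)\}$ for some computable predicate $R$. Choose a computable sequence of pairwise disjoint clopen pieces $P_n \subseteq 2^{\N}$ of dyadic measures $c_n$ satisfying the separation condition $c_n > \sum_{m > n} c_m$; for example, take $P_n = [0^{n+1} 1 0^n]$, giving $c_n = 2^{-2n-2}$ and $\sum_{m > n} c_m = c_n/3$.

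For each pair $(n,s)$, let $U_{n,s}$ be the computable open set enumerated by the rule: at stage $t$, if $R(n,s,t)$ holds, place all of $P_n$ into $U_{n,s}$. Thus $U_{n,s} = P_n$ if $\exists t\, R(n,s,t)$ and $U_{n,s} = \emptyset$ otherwise. Set $V_s = \bigcup_n U_{n,s}$, a uniformly computable open sequence, and $G = \bigcap_s V_s$, a computable $G_\delta$. Since the $P_n$ are pairwise disjoint and each $U_{n,s} \subseteq P_n$, a point of $P_{n_0}$ belongs to $V_s$ only via $U_{n_0,s}$; hence $\bigcap_s V_s = \bigcup_n \bigcap_s U_{n,s} = \bigcup_{n \in A} P_n$, and $\mu(G) = \sum_{n \in A} c_n$.

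Now suppose for contradiction that $\{q \in \Q : \mu(G) \geq q\}$ is $\Sigma^0_2$. By the definitions in Section~\ref{sec:measuredef} this set is always $\Pi^0_2$, so the assumption makes it $\Delta^0_2$ and hence $\emptyset'$-decidable. We then recover $A$ inductively using $\emptyset'$: assuming $A \cap [0,n)$ has been computed, let $S_n = \sum_{m \in A \cap [0,n)} c_m$; the separation $c_n > \sum_{m > n} c_m$ guarantees that $n \in A$ iff $\mu(G) \geq S_n + c_n$, a $\emptyset'$-decidable question. Therefore $A \leq_T \emptyset'$, so $A \in \Delta^0_2 \subseteq \Sigma^0_2$, contradicting the choice of $A$.

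The main obstacle is verifying that $G$ really is a computable $G_\delta$ with the announced measure; this rests on interchanging the quantifiers $\forall s\, \exists n$ and $\exists n\, \forall s$ in the membership condition for $\bigcap_s V_s$, which is legitimate precisely because the pieces $P_n$---and hence the sets $U_{n,s}$---are pairwise disjoint.
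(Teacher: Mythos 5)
Your proof is correct and follows essentially the same strategy as the paper's: encode a $\Pi^0_2$ set that is not $\Sigma^0_2$ into the measure of a computable $G_\delta$ with enough separation between the ``digits'' that the set can be decoded from the cut $\{q\in\Q : \mu(G)\geq q\}$ with a $\emptyset'$ oracle, then derive a contradiction from $\Sigma^0_2\cap\Pi^0_2=\Delta^0_2=\{X : X\leq_T\emptyset'\}$. The only (cosmetic) difference is the encoding gadget: the paper takes the lexicographic cut below $\textsc{Tot}$, so that $\mu(G)$ has binary expansion $\textsc{Tot}$, whereas you use pairwise disjoint clopen blocks whose measures satisfy $c_n>\sum_{m>n}c_m$.
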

\begin{proof}
Let {\sc Tot} denote the $\Pi^0_2$ complete index set $\{e\in\omega\mid W_e=\omega\}$, where $\{W_e\}_{e\in\omega}$ is the standard enumeration of the c.e.\ sets. We identify {\sc Tot} with its characteristic function. Let $r = \sum_{i=0}^{\infty} \frac{\text{\sc Tot}(i)}{2^{i+1}}$, so the binary expansion of $r$ is {\sc Tot}.

Let $\le_L$ denote lexicographic order on $2^{\leq\omega}$. Define $G = \{X\in 2^\omega\mid X\leq_L\text{\sc Tot}\}$ and note that $r=\mu(G)$. To see that $G$ is a computable $G_\delta$ set, notice that
\[
X\in G \iff \forall n \exists s (X[n]\le_L \text{\sc Tot}_{n,s})
\]
where {\sc Tot}$_{n,s} = \{e<n\mid 0,\dots,n-1\in W_{e,s}\}$.

Now let $A = \{q\in\Q\mid \mu(G)\geq q\} = \{q\in\Q\mid r\geq q\}$. It is not hard to see that we can recover {\sc Tot} from $A$. First, note that $0\in\text{\sc Tot}$ if and only if $1/2\in A$ (using the fact that {\sc Tot} is coinfinite). Next, $1\in\text{\sc Tot}$ if and only if either $0\in\text{\sc Tot}$ and $3/4\in A$ or $0\notin\text{\sc Tot}$ and $1/4\in A$. The induction continues in the obvious way, showing that $\text{\sc Tot}\leq_T A$.

As noted above, $A$ is a $\Pi^0_2$ set. If $A$ were $\Sigma^0_2$, then $A$ would be computable from $\emptyset'$. But this would imply that $\emptyset''\equiv_T \text{\sc Tot}\leq_T \emptyset'$, which is a contradiction. Therefore, $A$ is not $\Sigma^0_2$.
\end{proof}

\begin{cor}
\label{cor:S3}
$\REC \notmodels \Gdelta$. 
\end{cor}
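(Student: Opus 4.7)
The plan is to use the computable $G_\delta$ set $G$ from Proposition~\ref{prop:gdelta} as an explicit witness; I will show that no computable $F_\sigma$ subset of $G$ has the same measure as $G$, by bounding the arithmetical complexity of the Dedekind cut of $\mu(F)$ for an arbitrary computable $F_\sigma$ set $F$ and contrasting this with Proposition~\ref{prop:gdelta}.

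First I would observe that $\mu(G) = r$ is irrational. By construction, the binary expansion of $r$ is the characteristic function of $\text{\sc Tot}$, which is not computable and in particular not eventually periodic. Consequently, for each rational $q$ the conditions $\mu(G) \geq q$ and $\mu(G) > q$ coincide, so $\{q \in \Q \mid \mu(G) \geq q\} = \{q \in \Q \mid \mu(G) > q\}$; by Proposition~\ref{prop:gdelta} this common set is not $\Sigma^0_2$.

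The main step is to show that for every computable $F_\sigma$ set $F$, the set $\{q \in \Q \mid \mu(F) > q\}$ is $\Sigma^0_2$. Writing $F = \bigcup_n [F_n]^c$ and $F^m = \bigcup_{n \leq m}[F_n]^c$, the complement $U_m = \bigcap_{n \leq m}[F_n]$ is uniformly a computable open set, since a finite intersection of (codes of) open sets has a canonical computable code. Because $\mu(F) = \sup_m \mu(F^m) = \sup_m (1-\mu(U_m))$, we have $\mu(F) > q$ iff there exist $m$ and a rational $s < 1-q$ with $\mu(U_m) \leq s$. By the definition of $\mu(U) \leq s$ from Section~\ref{sec:measuredef} the inner condition $\mu(U_m) \leq s$ is $\Pi^0_1$ uniformly in $m,s$, which makes the whole statement $\Sigma^0_2$ as claimed.

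Combining the two observations: if some computable $F_\sigma$ set $F \subseteq G$ satisfied $\mu(F) = \mu(G)$, then the Dedekind cuts of $\mu(F)$ and $\mu(G)$ in $\Q$ would agree, giving $\{q \mid \mu(G) \geq q\} = \{q \mid \mu(F) > q\}$, which would force the former set to be $\Sigma^0_2$ and contradict Proposition~\ref{prop:gdelta}. Hence $G$ witnesses $\REC \notmodels \Gdelta$, and a fortiori $\RCA \nvdash \Gdelta$, so also $\RCA \nvdash \Gepsilon \rightarrow \Gdelta$ and $\RCA \nvdash \Positive \rightarrow \Gdelta$. The delicate part is the complexity bound on $\mu(F) > q$; once the closed approximants $F^m$ are coded as complements of finite intersections of open sets, it reduces directly to the $\Pi^0_1$ definition of $\mu(U) \leq s$ from Section~\ref{sec:measuredef}, so no genuinely new machinery is needed beyond Proposition~\ref{prop:gdelta}.
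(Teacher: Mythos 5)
Your proof is correct and follows essentially the same route as the paper: both use the set $G$ of Proposition~\ref{prop:gdelta} together with the irrationality of $\mu(G)$, and your direct verification that $\mu(F)>q$ is $\Sigma^0_2$ via the increasing closed approximants $F^m$ and their open complements $U_m$ is just an unpacking of the paper's appeal to the Section~\ref{sec:measuredef} fact that $\mu(F^c)\leq 1-q$ is $\Sigma^0_2$ when the measure is irrational. (Only your closing aside overreaches: $\RCA\nvdash\Gepsilon\rightarrow\Gdelta$ does not follow from $\REC\notmodels\Gdelta$ alone---one also needs $\REC\models\Gepsilon$, i.e.\ Corollary~\ref{cor:pos}---but that is outside the statement being proved.)
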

\begin{proof}
Consider the computable $G_{\delta}$ set $G$ from Proposition~\ref{prop:gdelta}. Note that $\mu(G)$ is irrational, or else $\mu(G)\geq q$ would clearly be $\Sigma^0_2$. Suppose that there is a computable $F_{\sigma}$ set $F$ such that $\mu(G) = \mu(F)$, so $\mu(G)\geq q$ if and only if $\mu(F)\geq q$. (Here, we do not even need to assume that $F\subseteq G$.) Recall that $\mu(F)\geq q$ if and only if $\mu(F^c)\leq 1-q$. Since $\mu(G)$ is irrational, $1-\mu(G)$ is irrational, so $\mu(F^c) \leq 1-q$ is a $\Sigma^0_2$ predicate. But $\mu(F^c) \leq 1-q$ is equivalent to $\mu(G)\geq q$, which is a contradiction.  
\end{proof}

The following proposition just says that there are $\Sigma^0_1$ classes in $2^\omega$ with arbitrarily small measure that contain all computable sets. This is well known: consider the $\Sigma^0_1$ classes that make up a universal Martin-L\"of test $\{U_n\}_{n\in\omega}$.

\begin{prop}
\label{prop:closed}
Let $\epsilon > 0$. There is a computable closed set $C$ such that $C$ contains no computable elements and  $\mu(C)\geq 1-\epsilon$.
\end{prop}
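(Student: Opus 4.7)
The plan is to invoke a universal Martin-L\"of test, which is a uniformly computable sequence of $\Sigma^0_1$ classes $U_n \subseteq 2^\omega$ with $\mu(U_n) \le 2^{-n}$ whose intersection is exactly the set of non-Martin-L\"of-random sequences. Fix $n$ with $2^{-n} \le \epsilon$, and set $C = 2^\omega \smallsetminus U_n$. Since $U_n$ is enumerable by a $\Sigma^0_1$ formula over $2^{<\omega}$, it has a code as an open set in the sense of Section~\ref{sec:measuredef}, so $C$ is a computable closed set, with $\mu(C) \ge 1 - \mu(U_n) \ge 1 - 2^{-n} \ge 1 - \epsilon$.

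The remaining point is that $C$ contains no computable element. The key fact to invoke is the well-known observation that no computable real is Martin-L\"of random: for any computable $X \in 2^\omega$, the classes $[X{\upharpoonright}n]$, $n \in \omega$, form (a trivial instance of) a Martin-L\"of test capturing $X$, so by universality $X$ lies in every $U_n$, and in particular $X \in U_n$, i.e., $X \notin C$.

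The proof is essentially a citation of a standard fact from algorithmic randomness; the only step requiring any care is the translation of the statement ``$U_n$ is $\Sigma^0_1$ of measure at most $2^{-n}$'' into the open/closed-set coding formalism of Section~\ref{sec:measuredef}, which is routine bookkeeping and presents no real obstacle.
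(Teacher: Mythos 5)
Your proof is correct, and it is in fact the route the paper itself gestures at in the sentence immediately preceding the proposition (``This is well known: consider the $\Sigma^0_1$ classes that make up a universal Martin-L\"of test $\{U_n\}_{n\in\omega}$''), but the paper's actual proof is different: it gives a self-contained construction from scratch. For each index $e$, whenever $\phi_e$ is seen to converge with values in $\{0,1\}$ on all inputs $\leq n+e$, the cylinder $[\langle\phi_e(0),\dots,\phi_e(n+e)\rangle]$ is enumerated into an open set $O$; this covers every computable set while contributing at most $2^{-(n+e+1)}$ per index, so $\mu(O)\leq 2^{-n}\leq\epsilon$, and $C=[O]^c$ works. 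The trade-off is clear. Your version is a one-line citation of two standard facts from algorithmic randomness (existence of a universal test, and that no computable sequence is Martin-L\"of random), and it even yields a stronger conclusion ($C$ contains only Martin-L\"of random elements, not merely no computable ones). The paper's version buys self-containedness and, more to the point in this reverse-mathematics context, an explicitly stage-by-stage enumeration that visibly produces a \emph{computable} code for the open set in the sense of Section~\ref{sec:measuredef} and a transparent measure bound verifiable in $\REC$; in your version that translation is the ``routine bookkeeping'' step you flag (replacing the c.e.\ set of strings generating $U_n$ by a computable generating set), which is indeed standard but is exactly the part the paper's construction handles for free. Both arguments are sound.
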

\begin{proof}
We define a computable open set set $O$ such that $O$ contains all of the computable sets and $\mu(O)\leq\epsilon$. Fix $n\in\omega$ such that $2^{-n}\leq\epsilon$. We enumerate $O$ in stages. At stage $s$, we check for every $e\leq s$ if $\phi_e(x)$ has converged and taken values in $\{ 0,1 \}$ for all $x\leq n+e$. For those $e$ for which this happens, we enumerate $\langle \varphi_e(0), \ldots, \varphi_e(n+e) \rangle$ into $O_s$.  

It is clear that $O$ will contain all of the computable sets. Furthermore, each $e\in\omega$ adds at most $2^{-(n+e+1)}$ to the measure of $O$.  Therefore, $\mu(O)\leq\sum_{e=0}^{\infty} 2^{-(n+e+1)}=2^{-n}\leq\epsilon$.
\end{proof}

\begin{cor}
\label{cor:pos}
$\REC \models \Positive$ and $\REC \models \Gepsilon$.
\end{cor}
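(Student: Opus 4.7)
The plan is to exploit a simple semantic feature of the $\omega$-model $\REC$: any quantifier of the form $\forall X\in 2^{\mathbb{N}}$ interpreted in $\REC$ ranges only over computable reals, so the subset relation $F\subseteq P$ between codes for subclasses of $2^{\mathbb{N}}$ reduces to ``every \emph{computable} element of $[F]$ lies in $[P]$.'' In particular, any coded closed class with no computable member is, in $\REC$, a subclass of every other coded class. This is the entire engine of the proof; Proposition~\ref{prop:closed} was tailored precisely to supply closed classes of this sort, and I will simply plug it in.

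For $\Positive$, I would take a computable $G_{\delta}$ set $P$ with $\mu(P)>0$, fix any rational $\epsilon\in(0,1)$, and apply Proposition~\ref{prop:closed} to obtain a computable closed set $C$ with $\mu(C)\geq 1-\epsilon>0$ and no computable element. Then $C\subseteq P$ holds vacuously in $\REC$, and $C$ witnesses $\Positive$ for $P$.

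For $\Gepsilon$, given a computable $G_{\delta}$ set $P$ and a rational $\epsilon>0$, I would apply Proposition~\ref{prop:closed} once more to obtain a computable closed set $C$ with $\mu(C)\geq 1-\epsilon$ and no computable element. Since $\mu(P)\leq 1$, this immediately gives $\mu(C)\geq \mu(P)-\epsilon$ (in the comparative sense of Section~\ref{sec:measuredef}: for every rational $q\leq 1-\epsilon$ with $\mu(P)\geq q+\epsilon$ we have $\mu(C)\geq q$), and once again $C\subseteq P$ vacuously in $\REC$.

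There is essentially no obstacle. In fact the most striking feature of the proof is that the hypothesis $\mu(P)>0$ in $\Positive$ is never used, and no nontrivial property of $P$ enters the argument at all; the content of the corollary is really the observation that $\REC$ lacks enough reals to separate the closed class $[C]$ from the empty class whenever $C$ has no computable member.
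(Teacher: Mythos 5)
Your proof is correct and takes essentially the same approach as the paper: apply Proposition~\ref{prop:closed} to get a computable closed set of large measure with no computable members, which is therefore vacuously a subclass of any coded $G_\delta$ set in $\REC$. The only detail the paper spells out that you gloss over is the verification that $\REC$ itself satisfies the measure inequality for $C$ (which follows because the statement $\mu(O_t)\leq q$ for the complementary open code is arithmetic and $\REC$ is an $\omega$-model), but this is routine.
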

\begin{proof}
To see that $\REC \models \Positive$, fix any computable $G_{\delta}$ set $G$ such that $\mu(G) > 0$.  
By Proposition~\ref{prop:closed}, there is a computable closed set $C$ such that $\mu(C) > 0$ and 
$C$ contains no computable elements.  Therefore, $C$ is a code for a closed set in the 
$\omega$-model $\REC$ and $\REC \models C = \emptyset$ (in the sense that $\REC \models \neg \exists X ( X \in C)$), hence 
$\REC \models C \subseteq G$.

Since $C$ is a computable closed set, we can fix a computable prefix free code 
$O$ for the complement of $C$.  Because $\mu(C) > 0$, there is a rational $q < 1$ such that $\forall t \, (\mu(O_t) \leq q)$.  
Since $\mu(O_t) \leq q$ is an arithmetic fact and $\REC$ is an $\omega$-model, $\REC \models \forall t \, (\mu(O_t) \leq q)$ 
and hence $\REC \models \mu(C) > 0$.  Therefore, $\REC \models \Positive$.  

The proof that $\REC \models \Gepsilon$ is the same except that we start with $C$ such that 
$\mu(C) \geq \mu(G) - \epsilon$ for the given $\epsilon$.
\end{proof}

\begin{cor}
$\RCA\nvdash \Positive \rightarrow \Gdelta$ and $\RCA\nvdash \Gepsilon \rightarrow \Gdelta$.
\end{cor}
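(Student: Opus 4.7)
The plan is to observe that this is now a direct consequence of the two preceding corollaries, since $\REC$ is an $\omega$-model of $\RCA$. In more detail, recall that any $\omega$-model of $\RCA$ is given by a Turing ideal, and that $\REC$ (the collection of computable subsets of $\mathbb{N}$) forms such an ideal; hence $\REC \models \RCA$ (see Simpson \cite{si:09}).

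Now I would argue by contradiction. Suppose $\RCA \vdash \Positive \rightarrow \Gdelta$. Then every model of $\RCA$ that satisfies $\Positive$ must satisfy $\Gdelta$. By Corollary~\ref{cor:pos}, $\REC \models \Positive$, so we would get $\REC \models \Gdelta$, contradicting Corollary~\ref{cor:S3}. Thus $\RCA \nvdash \Positive \rightarrow \Gdelta$. The argument for $\Gepsilon \rightarrow \Gdelta$ is identical, using the other half of Corollary~\ref{cor:pos} (namely $\REC \models \Gepsilon$) in place of $\REC \models \Positive$.

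There is no real obstacle here; all of the work has been done in Proposition~\ref{prop:gdelta}, Corollary~\ref{cor:S3}, Proposition~\ref{prop:closed}, and Corollary~\ref{cor:pos}. The only thing one might wish to double-check is that the measure-theoretic statements inside $\Positive$, $\Gepsilon$, and $\Gdelta$ are being interpreted in $\REC$ in the same sense used in Section~\ref{sec:measuredef}, but this is handled uniformly by the observation (already used in the proof of Corollary~\ref{cor:pos}) that arithmetic facts about codes transfer between $V$ and any $\omega$-model.
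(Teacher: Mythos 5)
Your proposal is correct and is exactly the paper's argument: the paper derives the corollary immediately from Corollary~\ref{cor:S3} ($\REC \notmodels \Gdelta$) and Corollary~\ref{cor:pos} ($\REC \models \Positive$ and $\REC \models \Gepsilon$), using the standard fact that $\REC$ is an $\omega$-model of $\RCA$. Your added remarks about soundness and the uniform interpretation of the measure-theoretic statements are just an explicit spelling-out of the same reasoning.
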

\begin{proof}
This corollary follows immediately from Corollaries~\ref{cor:S3} and \ref{cor:pos}.
\end{proof}

\section{Logarithm Properties}
\label{sec:log}

We have now established that although positive measure domination is equivalent 
to uniform almost everywhere domination, $\RCA$ is not strong enough to 
prove $\Positive \rightarrow \Gdelta$.  In the last two sections, we show that $\WWKL$ is strong enough to prove this implication.  
In this section, we sketch the development of the natural logarithm in $\RCA$ and give an analogue of Lemma \ref{lem:analysis}.  

We wish to define the natural logarithm using the usual integral form
\[
\ln(x) = \int_{1}^{x} \frac{1}{u} \, du.
\]
Because the function $f(u) = 1/u$ does not have a modulus of uniform continuity, we do not automatically obtain
a code for $\ln(x)$ as a continuous function in \RCA.  
(See Simpson \cite{si:09}, Definition IV.2.1, Lemma IV.2.6, and Theorem IV.2.7 for the 
relevant background on integrals in subsystems of second order arithmetic.)  

Let $q \in \mathbb{Q}^{+}$.  Following the standard procedure for estimating $\int_{1}^{q} \frac{1}{u} \, du$ by rectangles, 
we subdivide the interval $[1,q]$ (or $[q,1]$ if $q<1$) into $n$ equal pieces.  Because $f(u) = 1/u$ is a decreasing function, we obtain 
upper and lower estimates of the integral using the left and right endpoints of each interval to define the height of the approximating rectangle.    
A short calculation shows that
\[
\text{Upper Sum} \, - \, \text{Lower Sum} \, = \, \frac{|q-1|}{n}\left|1-\frac{1}{q}\right|,
\]
which goes to $0$ as $n \rightarrow \infty$.

In $\RCA$, we define the following code for $\ln(x)$.  (See Simpson \cite{si:09}, Definition II.6.1, for the formal 
definition of a code for a continuous function in a subsystem of second order arithmetic.)  Let  
\[
\Phi_{ln} \subseteq \mathbb{N} \times \mathbb{Q}^{+} \times \mathbb{Q}^{+} \times \mathbb{Q}^{+} 
\times \mathbb{Q}^{+}
\]
be given by $(n,a,r,b,s) \in \Phi_{ln}$ if and only if $0 < a-r$, the upper sum for the estimate of 
$\ln(a+r)$ using $n$ intervals is $< b+s$, and the lower sum for the estimate of $\ln(a-r)$ using 
$n$ intervals is $> b-s$.  Since the difference between the upper and lower sums converges to $0$, 
$\Phi_{ln}$ is a code for a continuous function and the function $\ln(x)$ defined by 
these conditions coincides with $\int_{1}^{x} 1/u \, du$.  The proof that $1/x$ is the derivative of 
$\ln(x)$ can be carried out in a straightforward manner within $\RCA$.

\begin{lem}[\RCA] 
\label{lem:logprod}
The following results hold.  
\begin{enumerate}
\item[(1)] The Mean Value Theorem.
\item[(2)] If $f$ is a differentiable function on an open interval in $\mathbb{R}$, then $f' = 0$ on this 
interval if and only if $f$ is constant.  If $f' \geq 0$ on this interval, then $f$ is nondecreasing, and if $f' \leq 0$ on this interval, 
then $f$ is nonincreasing.  
\item[(3)] For all $a,b \in \mathbb{R}^{+}$, $\ln(ab) = \ln(a) + \ln(b)$.
\item[(4)] For all $k \in \mathbb{N}$ and all sequences of positive rational numbers $a_0, \ldots, a_k$, 
$\ln(\prod_{i=0}^{k} a_i) = \sum_{i=0}^k \ln(a_i)$.    
\end{enumerate}
\end{lem}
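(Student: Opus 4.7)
The plan is to establish these results in the order given, using (1) as the main tool for (2), (2) as the main tool for (3), and induction for (4). For (1), I would follow the classical route through Rolle's theorem: given $f$ continuously differentiable on $[a,b]$, set $g(x) = f(x) - f(a) - \frac{f(b)-f(a)}{b-a}(x-a)$, so $g(a) = g(b) = 0$. It then suffices to locate $c \in (a,b)$ with $g'(c) = 0$. In $\RCA$ the extreme value theorem is available for continuous functions equipped with a modulus of uniform continuity on a closed bounded interval (see Simpson \cite{si:09}, Chapter IV). Since the $\ln$ function is built from the integral of $1/u$ on bounded subintervals of $(0,\infty)$, where $1/u$ does admit such a modulus, $g'$ also admits one, and the standard arguments force $g'$ to vanish at an interior extremum of $g$.

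For (2), I would apply (1) to arbitrary subintervals $[c,d]$ of the open interval: $f(d) - f(c) = f'(\xi)(d-c)$ for some interior $\xi$. The sign hypothesis on $f'$ therefore transfers to a sign on $f(d) - f(c)$, giving the appropriate monotonicity. When $f' \equiv 0$, the same identity shows $f(d) = f(c)$ for all $c,d$, so $f$ is constant; the reverse direction is immediate. For (3), fix $a \in \mathbb{R}^{+}$ and set $h(x) = \ln(ax) - \ln(x)$ on $(0,\infty)$. Using the derivative relation $\ln'(x) = 1/x$ (which is proved directly from the defining integral, as indicated in the paragraph preceding the lemma) together with the chain rule, compute $h'(x) = a \cdot \frac{1}{ax} - \frac{1}{x} = 0$. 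By (2), $h$ is constant on $(0,\infty)$. Evaluating at $x=1$ and noting that $\ln(1) = 0$ directly from the integral definition, one obtains $h(x) \equiv \ln(a)$, and setting $x = b$ yields $\ln(ab) = \ln(a) + \ln(b)$. Part (4) then follows by a straightforward induction on $k$: the base case $k=0$ is trivial, and the inductive step applies (3) with the accumulated product.

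The main obstacle is really part (1): the full extreme value theorem for arbitrary continuous functions is \emph{not} provable in $\RCA$, so the argument must be set up so that the functions in question carry an explicit modulus of uniform continuity. This is the reason for restricting to continuously differentiable situations, and it is also the reason the integral code $\Phi_{ln}$ was defined with explicit rational error bounds in the preceding paragraph. A secondary technical point is verifying the chain rule for $\ln$ in $\RCA$ at the level of generality needed for (3), but for the specific composition $x \mapsto ax$ this reduces to a direct calculation with the defining integral via a change of variable, which is routine in $\RCA$.
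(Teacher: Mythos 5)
Your parts (2), (3), and (4) are fine and match the paper's route exactly (the paper derives (2) and (3) from the Mean Value Theorem by the classical arguments and gets (4) by induction, noting only that the induction is on a $\Pi^0_1$ formula since equality of reals is $\Pi^0_1$, which is available in \RCA). The problem is your proof of (1). You propose to prove Rolle's theorem by locating an interior extremum of $g$, and you try to justify this by restricting to functions with a modulus of uniform continuity. But equipping a function with a modulus does not rescue the extreme value theorem in \RCA: with a modulus one can prove in \RCA\ that the supremum \emph{exists} (as a real number), but the statement that a continuous function on $[0,1]$ \emph{attains} its supremum is equivalent to \WKL\ over \RCA, and the standard reversal already uses a function with an explicit modulus of uniform continuity (e.g.\ $f(x)=-2^{-n}$ where $n$ is the level at which $x$ exits a given infinite binary tree with no path in the model). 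So the step ``$g$ has an interior extremum, hence $g'$ vanishes there'' is not available, and your argument as written proves MVT only in \WKL, not in \RCA.

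This is exactly why the paper does not prove (1) at all but cites Hardin and Velleman \cite{har:01}: the content of that paper is a proof of the Mean Value Theorem in \RCA\ that \emph{avoids} the extreme value theorem entirely, replacing the appeal to an attained extremum by an approximate bisection-style argument. To repair your write-up you should either import their theorem as a black box (as the paper does) or reproduce an argument of that kind; the Rolle's-theorem route cannot be patched by adding moduli of uniform continuity. A second, minor point: part (2) of the lemma is stated for arbitrary differentiable $f$ on an open interval, so you should not restrict to continuously differentiable $f$ in (1) unless you also check that every application in the lemma (and in Lemmas 6.3 and 6.4) supplies a continuous derivative --- which they do, but the general statement deserves the general MVT.
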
 

\begin{proof}
Part (1) is proved by Hardin and Velleman in \cite{har:01}.  Parts (2) and (3) follow by their 
classical proofs using the Mean Value Theorem.  Part (4) follows by $\Pi^0_1$ induction on 
$k$ since the equality predicate between reals is $\Pi^0_1$.  
\end{proof}

\begin{lem}[\RCA]
\label{lem:lower}
For $0 \leq x < 1$, $x \leq | \ln(1-x)|$.
\end{lem}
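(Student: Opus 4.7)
The plan is to show the equivalent inequality $\ln(1-x) \leq -x$ on $[0,1)$, since for $x\in[0,1)$ we have $1-x\in(0,1]$, which gives $\ln(1-x)\leq 0$ and hence $|\ln(1-x)| = -\ln(1-x)$. The natural route is to introduce the auxiliary function $f(x) = \ln(1-x) + x$ on $[0,1)$, verify that $f(0) = 0$, and show that $f$ is nonincreasing on this interval by computing the derivative.

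First, I would note (using the integral definition of $\ln$ sketched in Section~\ref{sec:log}) that in $\RCA$ the chain rule gives $\frac{d}{dx}\ln(1-x) = -\frac{1}{1-x}$, so
\[
f'(x) = -\frac{1}{1-x} + 1 = -\frac{x}{1-x}.
\]
For $0 \leq x < 1$, both $x \geq 0$ and $1-x > 0$, so $f'(x) \leq 0$ on this interval.

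Next, I would invoke Lemma~\ref{lem:logprod}(2): since $f'(x) \leq 0$ on the open interval containing $[0,1)$ (strictly speaking, one applies it on each subinterval $(-\epsilon, 1-\epsilon)$ and uses continuity at $0$), $f$ is nonincreasing there. Combined with $f(0) = \ln(1) + 0 = 0$, this gives $f(x) \leq 0$ for $0 \leq x < 1$, i.e., $\ln(1-x) \leq -x$. Taking absolute values yields $x \leq |\ln(1-x)|$ as required.

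The only mildly delicate point is justifying that the derivative calculation and the ``nonincreasing'' conclusion are available in $\RCA$ for this particular composite function; but since $\ln$ is coded as a continuous function in Section~\ref{sec:log} with $\ln'(u) = 1/u$, and the identity function and constant $1$ are available, the standard proof of the chain rule (via the Mean Value Theorem, which Lemma~\ref{lem:logprod}(1) asserts is provable in $\RCA$) goes through without difficulty. So I do not expect any real obstacle; the argument is a direct $\RCA$ formalization of the one-line classical proof.
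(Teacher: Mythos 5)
Your proof is correct and is essentially the paper's own argument with the sign of the auxiliary function flipped: the paper takes $f(x) = -x - \ln(1-x)$, checks $f(0)=0$ and $f'(x) = -1 + \frac{1}{1-x} \geq 0$, and concludes via Lemma~\ref{lem:logprod}(2) that $f$ is nondecreasing, hence nonnegative. Your extra care about the chain rule and the sign of $\ln(1-x)$ is fine but does not change the substance.
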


\begin{proof}
Consider the function $f(x) = -x - \ln(1-x)$.  Since $f(0) = 0$ and 
\[
f'(x) = -1+\frac{1}{1-x} \geq 0
\]
for $0 \leq x < 1$, $f(x)$ is nondecreasing and nonnegative on $[0,1)$. But $-x-\ln(1-x) \geq 0$ implies that $x \leq | \ln(1-x)|$.
\end{proof}

\begin{lem}[\RCA] 
\label{lem:upper}
For $0 \leq x\leq 1/2$, $| \ln(1-x)| \leq 2x$.
\end{lem}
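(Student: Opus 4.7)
The plan is to mirror the proof of Lemma \ref{lem:lower}, replacing the function $-x - \ln(1-x)$ with $2x + \ln(1-x)$. First I would observe that on $[0,1/2]$ we have $1-x \in [1/2,1]$, so $\ln(1-x) \leq 0$, and hence $|\ln(1-x)| = -\ln(1-x)$. Thus proving the lemma reduces to showing $2x + \ln(1-x) \geq 0$ on $[0,1/2]$.

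Set $g(x) = 2x + \ln(1-x)$. Then $g(0) = 0$, and since $\ln$ has derivative $1/u$ (as noted after the construction of $\Phi_{ln}$), the chain rule (which is routine in $\RCA$) gives
\[
g'(x) = 2 - \frac{1}{1-x} = \frac{1-2x}{1-x}.
\]
For $0 \leq x \leq 1/2$ both numerator and denominator are nonnegative, so $g'(x) \geq 0$ on this interval. By Lemma~\ref{lem:logprod}(2), $g$ is nondecreasing on $[0,1/2]$, whence $g(x) \geq g(0) = 0$, i.e.\ $-\ln(1-x) \leq 2x$, which is the desired inequality.

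There is no real obstacle here; the only mild subtlety is that we need the derivative identity for $\ln$ and the monotonicity-from-nonnegative-derivative principle, both of which are already available via the construction of $\Phi_{ln}$ and Lemma~\ref{lem:logprod}(2). Everything else is an elementary computation, and the argument is executed entirely within $\RCA$ since it uses only the Mean Value Theorem consequences already cited.
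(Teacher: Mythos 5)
Your proof is correct and is essentially the paper's own argument: the paper works with $f(x) = -2x - \ln(1-x)$ and shows it is nonincreasing and nonpositive via $f'(x) = -2 + \tfrac{1}{1-x} \leq 0$ on $[0,1/2]$, which is just the negative of your $g$. The only cosmetic difference is that you make explicit the observation that $|\ln(1-x)| = -\ln(1-x)$ on this interval, which the paper leaves implicit.
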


\begin{proof}
Consider the function $f(x) = -2x - \ln(1-x)$.  Since $f(0) = 0$ and 
\[
f'(x) = -2 + \frac{1}{1-x} \leq 0
\]
for $0 \leq x \leq 1/2$, $f(x)$ is nonincreasing and nonpositive on $[0,1/2]$. But $-2x-\ln(1-x) \leq 0$ implies that
$|\ln(1-x)| \leq 2x$.
\end{proof}

\begin{defn}[\RCA] 
Let $a_i$, $i \in \mathbb{N}$, be a sequence of real numbers.  
$\sum_{i=0}^{\infty} a_i$ is \emph{bounded above} if there is a rational $q$ such that 
for every $k$, $\sum_{i=0}^k a_i \leq q$.  (We do not assume that the infinite series converges for 
this definition.)  Similarly, $\sum_{i=0}^{\infty} a_i$ is \emph{bounded below} if there 
is a rational $q$ such that for every $k$, $\sum_{i=0}^k a_i \geq q$.
\end{defn}

\begin{defn}[\RCA] 
Let $b_i$, $i \in \mathbb{N}$, be a sequence of real numbers such that $0 < b_i \leq 1$.  $\prod_{i=0}^{\infty} b_i$ is 
\emph{bounded away from $0$} if there is a rational $q > 0$ such that for every $k$, 
$\prod_{i=0}^k b_i \geq q$.  
\end{defn}

Finally, we arrive at the version of Lemma~\ref{lem:analysis} that we will use in the next section.  

\begin{prop}[\RCA] 
\label{prop:converge}
Let $\langle a_i \mid i \in \mathbb{N} \rangle$ be a sequence of rational numbers such that 
$0 \leq a_i < 1$.  $\sum_{i=0}^{\infty} a_i$ is bounded above if and only if 
$\prod_{i=0}^{\infty} (1-a_i)$ is bounded away from $0$.
\end{prop}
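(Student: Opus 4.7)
The plan is to replace the product by a sum via the natural logarithm developed earlier in this section, then compare the resulting sum with $\sum a_i$ using the linear bounds from Lemmas~\ref{lem:lower} and~\ref{lem:upper}. By Lemma~\ref{lem:logprod}(4),
\[
\ln\Bigl(\prod_{i=0}^k (1-a_i)\Bigr) \;=\; \sum_{i=0}^k \ln(1-a_i) \;=\; -\sum_{i=0}^k |\ln(1-a_i)|,
\]
and since $\ln$ is strictly increasing on $(0,\infty)$ (Lemma~\ref{lem:logprod}(2)) and $\ln(2^{-n}) = -n\ln 2$ can be made arbitrarily negative (using that $\ln 2 > 0$), the partial products $\prod_{i=0}^k(1-a_i)$ are bounded away from $0$ by a positive rational if and only if the partial sums $\sum_{i=0}^k|\ln(1-a_i)|$ admit a rational upper bound. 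It therefore suffices to prove that $\sum a_i$ is bounded above iff $\sum|\ln(1-a_i)|$ is bounded above.

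The direction ``$\sum|\ln(1-a_i)|$ bounded above $\Rightarrow$ $\sum a_i$ bounded above'' is immediate from Lemma~\ref{lem:lower} applied termwise. For the converse, assume $\sum_{i=0}^k a_i \le q$ for all $k$. One would like to apply Lemma~\ref{lem:upper} termwise to get $|\ln(1-a_i)|\le 2a_i$, but this is only valid when $a_i \le 1/2$, and $|\ln(1-a_i)|$ can be arbitrarily large when $a_i$ is close to $1$. The key observation is that the set $S = \{i : a_i > 1/2\}$ is bounded: each of its elements contributes at least $1/2$ to the partial sums of $\sum a_i$, so $|S\cap[0,k]| \le \lfloor 2q\rfloor$ for every $k$; since $a_i > 1/2$ is decidable for rational $a_i$, $S$ is $\Delta^0_1$, and an argument in $\RCA$ (noting that the nondecreasing function $k \mapsto |S \cap [0,k]|$ takes values in a finite set and is hence eventually constant) shows that $S$ is bounded, say $S \subseteq [0,N]$. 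Splitting the partial sum by whether $a_i > 1/2$ and applying Lemma~\ref{lem:upper} to the other part,
\[
\sum_{i=0}^k |\ln(1-a_i)| \;\le\; \sum_{i \in S}|\ln(1-a_i)| \,+\, 2\!\!\sum_{i \le k,\; a_i \le 1/2}\!\! a_i \;\le\; \sum_{i \in S}|\ln(1-a_i)| + 2q.
\]
The right-hand side is a single fixed real, so it has a rational upper bound independent of $k$.

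The main obstacle is the $\RCA$-level justification that the set $S$ of ``bad'' indices is bounded; this is what makes $\sum_{i \in S}|\ln(1-a_i)|$ a well-defined finite sum of specific reals. Once that is in hand, the rest of the argument amounts to routine bookkeeping with the logarithm just constructed and with rational approximations to quantities of the form $-\ln r$ and $2^{-n}$.
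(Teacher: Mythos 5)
Your proof is correct and follows essentially the same route as the paper's: convert the product to a sum via $\ln(\prod(1-a_i))=\sum\ln(1-a_i)$, then compare $\sum|\ln(1-a_i)|$ with $\sum a_i$ termwise using Lemmas~\ref{lem:lower} and~\ref{lem:upper}, treating the finitely many indices with $a_i>1/2$ separately. You are in fact somewhat more explicit than the paper on two points it glosses over, namely the $\RCA$-level argument that the set of bad indices is bounded and the use of $2^{-n}$ in place of the exponential function.
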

\begin{proof}
For both expressions, the only way they can be bounded as desired is if $a_i$ converges to $0$, in particular for all but finitely many $i$ we have $0\le a_i\le 1/2$. So by Lemmas \ref{lem:lower} and \ref{lem:upper}, $\sum_{i=0}^{\infty} a_i$ is bounded above if and 
only if $\sum_{i=0}^{\infty} |\ln(1-a_i)|$ is bounded above.   
Because $\ln(1-a_i) = -|\ln(1-a_i)|$, $\sum_{i=0}^{\infty} \ln(1-a_i)$ is bounded below 
if and only if $\sum_{i=0}^{\infty} |\ln(1-a_i)|$ is bounded above.  Therefore, to finish the 
proof, it suffices to show that $\sum_{i=0}^{\infty} \ln(1-a_i)$ is bounded below if and only if 
$\prod_{i=0}^{\infty} (1-a_i)$ is bounded away from $0$.  By Part (3) of Lemma \ref{lem:logprod}
\[
\sum_{i=0}^k \ln(1-a_i) \geq q \Leftrightarrow \ln \left( \prod_{i=0}^k (1-a_i) \right) \geq q \Leftrightarrow 
\prod_{i=0}^k (1-a_i) \geq e^q > 0.
\]
(We omit the straightforward details of developing the exponential function as the inverse of the natural log.)  
\end{proof}

We will also want a more explicit version of one direction of Lemma~\ref{lem:analysis}.

\begin{prop}[\RCA]
\label{prop:product}
Let $k\in\mathbb{N}$ and let $\langle a_i \mid 0\le i\le k \rangle$ be a sequence of rational numbers such that 
$0\leq a_i\leq \frac{1}{2}$. If $\sum_{i=0}^{k} a_i \leq 2$, then $\prod_{i=0}^{k} (1-a_i) \geq \frac{1}{81}$.
\end{prop}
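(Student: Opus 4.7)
The plan is to linearize the product via the natural logarithm, exploiting that Lemma~\ref{lem:upper} gives a clean linear upper bound on $|\ln(1-x)|$ for $x \in [0,1/2]$. Applied coordinatewise to each $a_i$, this yields $|\ln(1-a_i)| \leq 2a_i$. Since $1-a_i \leq 1$, monotonicity of $\ln$ (part~(2) of Lemma~\ref{lem:logprod}, together with $(\ln)' = 1/u > 0$) forces $\ln(1-a_i) \leq 0$, so $\ln(1-a_i) = -|\ln(1-a_i)| \geq -2a_i$. Summing over $0 \leq i \leq k$ and using the hypothesis $\sum_{i=0}^k a_i \leq 2$ gives $\sum_{i=0}^k \ln(1-a_i) \geq -4$; by part~(4) of Lemma~\ref{lem:logprod} this rewrites as
\[
\ln\!\left( \prod_{i=0}^k (1-a_i) \right) \geq -4.
\]

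Now $1/81 = 3^{-4}$, so by the multiplicativity of $\ln$ we have $\ln(1/81) = -4\ln 3$. If one can show $\ln 3 \geq 1$, then $\ln(1/81) = -4\ln 3 \leq -4 \leq \ln\!\left( \prod_{i=0}^k (1-a_i) \right)$, and by strict monotonicity of $\ln$ we conclude $\prod_{i=0}^k (1-a_i) \geq 1/81$, as desired. So the whole proposition reduces, inside $\RCA$, to verifying that $\ln 3 \geq 1$.

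Because $1/u$ is decreasing on $[1,3]$, every right Riemann sum for $\int_1^3 (1/u)\,du$ is a lower bound for $\ln 3$. A small finite computation with eight equal subintervals shows
\[
\ln 3 \geq \tfrac{1}{5} + \tfrac{1}{6} + \tfrac{1}{7} + \tfrac{1}{8} + \tfrac{1}{9} + \tfrac{1}{10} + \tfrac{1}{11} + \tfrac{1}{12} > 1,
\]
which is decidable rational arithmetic and therefore unproblematically available in $\RCA$. The main (and really only) obstacle is the numerical tightness of the stated constant: $1/81 \approx 0.0123$ is not very far from the true value $e^{-4} \approx 0.0183$, so one cannot afford a crude estimate of $\ln 3$ and a genuine (if short) finite computation is unavoidable. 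Beyond that, the argument is simply the quantitative, finite analogue of Proposition~\ref{prop:converge}.
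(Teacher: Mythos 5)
Your proof is correct and follows essentially the same route as the paper's: apply Lemma~\ref{lem:upper} to get $\ln(1-a_i)\geq -2a_i$, sum to obtain $\ln\bigl(\prod_{i=0}^k(1-a_i)\bigr)\geq -4$, and then compare with $1/81=3^{-4}$. The only (minor, and arguably cleaner) difference is that the paper exponentiates and cites $e\leq 3$, whereas you stay on the logarithmic side and verify the equivalent fact $\ln 3\geq 1$ directly from the Riemann-sum definition, which avoids developing the exponential function in \RCA{} altogether.
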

\begin{proof}
If $0\leq a_i\leq \frac{1}{2}$, then by Lemma~\ref{lem:upper}, $0\leq -\ln(1-a_i)\leq 2a_i$. Thus 
\[
\sum_{i=0}^k \ln(1-a_i)\ge \sum_{i=0}^k (-2a_i) =(-2)\sum_{i=0}^k a_i \ge -4
\]
so as in Proposition \ref{prop:converge}, $\prod_{i=0}^k(1-a_i)\ge e^{-4}\geq 1/81$ (using the fact that $e\leq3$).
\end{proof}

\section{Working in \WWKL}
\label{sec:wwkl}

Throughout this section, we work in $\WWKL$ to prove $\Positive \rightarrow \Gdelta$.  
Our proof will roughly be a formalization of the arguments in Lemma \ref{lem:measure} and Corollary \ref{cor:pmd} with one important difference. In the proofs leading to Corollary \ref{cor:pmd}, we used the fact that every $\Pi^0_2$ 
class contains a $\Sigma^{\emptyset'}_2$ class of the same measure.  This fact allowed us 
to switch from working with a $\Pi^0_2$ class to working with closed classes with oracles.  
Because $\WWKL$ cannot prove the existence of $\emptyset'$, we need to work directly with the given $G_{\delta}$ set 
and approximate its measure within $\WWKL$.   Throughout this 
section we work in $\WWKL$ (in fact, except for Lemma \ref{lem:sigma01}, we work in $\RCA$), assume $\Positive$ and prove $\Gdelta$.  

Let $X = \langle X_i \mid i \in \mathbb{N} \rangle$ be a code for a $G_{\delta}$ set of positive measure.  Each 
$X_i$ is a nonempty prefix-free subset of $2^{< \mathbb{N}}$ and $X_{i,s}$ denotes the set of all strings $\tau \in X_i$ 
such that $|\tau| \leq s$.  We will be notationally sloppy about the distinction between coding sets, such as $X$ and $X_i$, and the 
subsets of $2^{\mathbb{N}}$ they code, relying on the context to indicate which is the intended meaning.  If the context is 
not clear, we will use square brackets $[X]$ to denote the coded subset of $2^{\mathbb{N}}$.     

For each pair $i, n \in \mathbb{N}$, we define a function $m_{i,n}(t)$ by primitive recursion 
(uniformly in $i$ and $n$) to approximate $\mu(X_i)$.  Set $m_{i,n}(0) = 0$ and 
\[
m_{i,n}(t+1) = 
\left\{ 
\begin{array}{ll} 
m_{i,n}(t) & \text{if} \, \mu(X_{i,t+1} - X_{i,m_{i,n}(t)}) < 2^{-n-i-1}, \\
t+1 & \text{otherwise.}
\end{array}
\right.
\]

\begin{lem}
The following properties hold for each $i,n \in \mathbb{N}$.  
\begin{enumerate}
\item[(1)] $\forall t,u \, (t < u \rightarrow m_{i,n}(t) \leq m_{i,n}(u))$.
\item[(2)] $\forall t,u \, (m_{i,n}(t) < m_{i,n}(u) \rightarrow (t<u \wedge \mu(X_{i,m_{i,n}(u)} - 
X_{i,m_{i,n}(t)}) \geq 2^{-i-n-1}))$.
\item[(3)] $\exists t \forall u \geq t \, (m_{i,n}(u) = m_{i,n}(t))$.
\end{enumerate}
\end{lem}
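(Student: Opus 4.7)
The plan is to prove the three properties in order, after first establishing a simple bound on $m_{i,n}$.

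I would begin with the auxiliary fact, proved by induction on $t$, that $m_{i,n}(t)\le t$. This holds at $t=0$, and at each successor step $m_{i,n}(t+1)$ is either $m_{i,n}(t)\le t\le t+1$ or $t+1$. This bound guarantees that $X_{i,m_{i,n}(t)}\subseteq X_{i,t+1}$, so that the set difference appearing in the recursion is meaningful and (since $X_i$ is prefix-free) satisfies $\mu(X_{i,t+1}\smallsetminus X_{i,m_{i,n}(t)})=\mu(X_{i,t+1})-\mu(X_{i,m_{i,n}(t)})$. Property (1) then follows immediately: at a single step $m_{i,n}(t+1)\in\{m_{i,n}(t),t+1\}$, and in either case $m_{i,n}(t+1)\ge m_{i,n}(t)$; the general case $t<u$ is a short induction on $u-t$.

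For property (2), the first conjunct $t<u$ is the contrapositive of (1). For the measure estimate, I would locate the largest $s$ in the finite interval $[t,u)$ with $m_{i,n}(s)=m_{i,n}(t)$; such an $s$ exists because $s=t$ qualifies. By maximality, $m_{i,n}(s+1)\neq m_{i,n}(s)$, so by the definition of the recursion $m_{i,n}(s+1)=s+1$ and $\mu(X_{i,s+1}\smallsetminus X_{i,m_{i,n}(t)})\ge 2^{-n-i-1}$. Since $s+1\le u$, property (1) gives $m_{i,n}(u)\ge s+1$, so $X_{i,s+1}\subseteq X_{i,m_{i,n}(u)}$, and monotonicity of measure yields the desired bound on $\mu(X_{i,m_{i,n}(u)}\smallsetminus X_{i,m_{i,n}(t)})$.

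For property (3), the key observation is that at each jump stage $t$ (meaning $m_{i,n}(t+1)\neq m_{i,n}(t)$) the quantity $\mu(X_{i,m_{i,n}(t+1)})=\mu(X_{i,t+1})$ exceeds $\mu(X_{i,m_{i,n}(t)})$ by at least $2^{-n-i-1}$. Since $\mu(X_{i,s})\le 1$ for every $s$ by the Kraft inequality, there can be at most $2^{n+i+1}$ such jumps. The step I would handle most carefully, and the one that requires working inside $\RCA$ rather than informally, is converting this cardinality bound into actual stabilization: the set $J$ of jump stages is $\Delta^0_1$, and its cardinality is bounded by the fixed standard integer $2^{n+i+1}$, so by $\Sigma^0_1$ bounding (available in $\RCA$) $J$ has a maximum element $t^{\ast}$, past which the recursion never changes, giving (3).
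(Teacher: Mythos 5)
Your proposal is correct and follows essentially the same route as the paper: (1) and (2) are read off the recursion (the paper simply says they ``follow directly from the definitions''), and (3) is obtained by observing that each change in $m_{i,n}$ forces $\mu(X_{i,m_{i,n}(\cdot)})$ to increase by at least $2^{-i-n-1}$, so there can be at most $2^{i+n+1}$ changes. The paper phrases this last step as a proof by contradiction, defining by primitive recursion the function enumerating the successive jump stages and deriving $\mu(X_{i,m_{i,n}(f(j))})\geq j\cdot 2^{-i-n-1}$, which is the same counting argument you give in the positive form of bounding the set of jump stages.
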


\begin{proof}
Properties (1) and (2) follow directly from the definitions.  To prove Property (3), we proceed by 
contradiction.  If Property (3) fails for a particular $i$ and $n$, then by Property (1), for all $t$, 
there is a $u > t$ such that $m_{i,n}(u) > m_{i,n}(t)$.  We define a function $f$ such that 
$f(0) = 0$ and $f(j+1) =$ the least $u > f(j)$ such that $m_{i,n}(u) > m_{i,n}(f(j))$.  By Property (2), 
we have that $\mu(X_{i,m_{i,n}(f(j))}) \geq j \cdot 2^{-i-n-1}$, which for $j > 2^{i+n+1}$ gives the 
desired contradiction.  
\end{proof}

We let $m_{i,n}^{\infty} = \lim_{s} m_{i,n}(s)$. (So in a sense $m_{i,n}^\infty$ is the last stage that is significant for the pair $(i,n)$.)   As we are working in \WWKL, we cannot form a function taking each pair 
$\langle i,n \rangle$ to $m_{i,n}^{\infty}$, so we understand each 
statement $m_{i,n}^{\infty} = k$ to be an abbreviation for the $\Delta^0_2$ formula 
given by the equivalent formulations $\exists t \forall u \geq t (m_{i,n}(u) = k)$ and 
$\forall t \exists u \geq t (m_{i,n}(u) = k)$.  

We say that $\langle \sigma,n \rangle \in \mathbb{N}^{< \mathbb{N}} \times \mathbb{N}$ is 
\emph{correct at} $s$ if $|\sigma|\le s$, $n\le s$, and $\sigma(i) = m_{i,n}(s)$ for all $i < |\sigma|$.  (The collection of triples 
$\langle \sigma, n, s \rangle$ such that $\langle \sigma,n \rangle$ is correct at $s$ is a set.)  We say that 
$\langle \sigma,n \rangle$ is \emph{correct} if $\sigma(i) = m_{i,n}^{\infty}$ for all $i < |\sigma|$ and 
we let $\mathbf{C_n^{\infty}}$ denote the $\Delta^0_2$ 
class of all strings $\sigma$ such that $\langle \sigma,n \rangle$ is correct.  (To help maintain the distinction between 
sets of strings and classes of strings, we use boldface letters for classes.  Any statement involving a class is to be 
regarded as shorthand for the statement given by substituting in the defining formula for the class.)  Notice that in addition to 
being a $\Delta^0_2$ class, $\mathbf{C_n^{\infty}}$ is also d.c.e.\ (a difference of two computably enumerable sets) in the sense that if $\langle \sigma,n \rangle$ becomes correct 
at $s$, then either $\langle \sigma,n \rangle$ remains correct at all future stages (and $\sigma \in \mathbf{C_n^{\infty}}$) or 
$\langle \sigma,n \rangle$ ceases to be correct at some $t > s$ and is never correct at any stage $\geq t$.  

We need to define the appropriate version of the set $I$ from Lemma \ref{lem:measure} 
for our argument.  Consider an arbitrary $n$, a stage $s$, and a value $k \leq s$.  The string 
$\sigma = \langle m_{0,n}(s), m_{1,n}(s), \ldots, m_{k-1,n}(s) \rangle$ is the unique string of length $k$ such that 
$\langle \sigma,n \rangle$ is correct at $s$.  It gives rise to the following sequence of clopen sets
\[
(X_{0,\sigma(0)})^c \subseteq (X_{0,\sigma(0)} \cap X_{1,\sigma(1)})^c  
\subseteq \cdots \subseteq \left(\bigcap_{j < |\sigma|} X_{j,\sigma(j)}\right)^c.
\]
The difference $(\bigcap_{j < |\sigma|} X_{j,\sigma(j)})^c - 
(\bigcap_{j < |\sigma|-1} X_{j,\sigma(j)})^c$ is a clopen set generated by a finite set of 
minimal length strings (so these strings form an antichain).  
We define the set $I \subseteq \mathbb{N}^{< \mathbb{N}} \times 2^{< \mathbb{N}} \times 
\mathbb{N} \times \mathbb{N}$ by $\langle \sigma, \tau, n, s \rangle \in I$ if and only if 
$\langle \sigma,n \rangle$ is correct at $s$ and $\tau$ is a minimum length string used to cover 
$(\bigcap_{j < |\sigma|} X_{j,\sigma(j)})^c - (\bigcap_{j < |\sigma|-1} X_{j,\sigma(j)})^c$.

We will be interested in the following projections and restrictions of $I$.  
\begin{gather*}
I_{\sigma,n,s} = \{ \tau \mid \langle \sigma,\tau,n,s \rangle \in I \} \\
I_s = \{ \langle \sigma, \tau, n \rangle \mid \langle \sigma, \tau, n, s \rangle \in I \} \\
I_{n,s}^{\exists \sigma} = \{ \tau \mid \exists \sigma ( \langle \sigma, \tau, n, s \rangle \in I) \} \\
\mathbf{I^{\infty}} = \{ \langle \sigma, \tau, n \rangle \mid \exists t \forall s \geq t 
( \langle \sigma, \tau, n, s \rangle \in I \} \\
\mathbf{I_{\sigma,n}^{\infty}} = \{ \tau \mid \exists s (\langle \sigma, \tau, n, s \rangle \in I \}
\end{gather*}
$I_{\sigma,n,s}$, $I_s$ and $I_{n,s}^{\exists \sigma}$ are all finite sets, while 
$\mathbf{I^{\infty}}$ is a $\Delta^0_2$ class of strings (via the equivalent condition $\forall t \exists s \geq t (\langle \sigma, \tau, n, s 
\rangle \in I)$) and $\mathbf{I_{\sigma,n}^{\infty}}$ is a $\Sigma^0_1$ class of strings.  (To see 
that $I_{n,s}^{\exists \sigma}$ is a finite set, notice that $I_{n,s}^{\exists \sigma}$ is the union of the finite sets $ I_{\mu,n,s}$ over the finitely many $\mu$ such that $\langle \mu,n \rangle$ is correct 
at $s$.)  
The following properties are easily verified from the definitions.  In the current argument, Property (7) plays the 
role of the Kraft inequality in Lemma \ref{lem:measure}.

\begin{lem}
\label{lem:basicprops}
The following properties hold for all $\sigma$, $\tau$, $n$ and $s$.
\begin{enumerate}
\item[(1)] If $\langle \sigma,n \rangle$ is not correct at $s$, then $I_{\sigma,n,s} = \emptyset$.
\item[(2)] If $\langle \sigma, n \rangle$ is correct at $s$, then $I_{\sigma,n,s} \subseteq I_{n,s}^{\exists \sigma}$.
\item[(3)] $\langle \sigma, \tau, n \rangle \in \mathbf{I^{\infty}}$ if and only if $\langle \sigma,n \rangle$ is correct and 
$\tau \in \mathbf{I_{\sigma,n}^{\infty}}$.  Furthermore, if $\langle \sigma,n \rangle$ is correct and is correct at $s$, then 
$\mathbf{I_{\sigma,n}^{\infty}} = I_{\sigma,n,s}$.  
\item[(4)] For each $n$ and $k$, there is a unique string $\sigma$ such that $|\sigma| = k$ and $\langle \sigma,n \rangle$ 
is correct (that is, $\sigma \in \mathbf{C_n^{\infty}}$).  For each $i < k$, $\langle \sigma \upharpoonright i,n \rangle$ is correct, 
\[
\left(\bigcap_{i<k} X_i\right)^c \subseteq \left(\bigcap_{i < k} X_{i,\sigma(i)}\right)^c = 
\left(\bigcap_{i < k} X_{i,m_{i,n}^{\infty}}\right)^c = 
\bigcup_{i < k} [\mathbf{I_{\sigma \upharpoonright i,n}^{\infty}}]
\]
and 
\[
\mu\left(\bigcup_{i < k} [\mathbf{I_{\sigma \upharpoonright i,n}^{\infty}}] - 
\left(\bigcap_{i<k} X_i\right)^c\right) \leq \sum_{i<k} 2^{-n-i-1}.
\]
\item[(5)] Extending Property (3), for each fixed $n$, 
\[
\mu\left(\bigcup_{\sigma \in \mathbf{C_n^{\infty}}} [\mathbf{I^\infty_{\sigma,n}}] - X^c\right) = 
\mu\left(\bigcup_{\sigma \in \mathbf{C_n^{\infty}}} [\mathbf{I^\infty_{\sigma,n}}] - 
\left(\bigcap_{i \in \mathbb{N}} X_i\right)^c\right) \leq \sum_{i=0}^{\infty} 2^{-n-i-1} = 2^{-n}.
\]
\item[(6)] $I_{\sigma,n,s}$ and $I_{n,s}^{\exists \sigma}$ are finite antichains and therefore 
\[
\sum_{\tau \in I_{\sigma,n,s}} 2^{-|\tau|-n} \leq \sum_{\tau \in I_{n,s}^{\exists \sigma}} 2^{-|\tau|-n} 
\leq 2^{-n} \cdot \sum_{\tau \in I_{n,s}^{\exists \sigma}} 2^{-|\tau|} \leq 2^{-n}.
\]
\item[(7)] For any fixed $s$, 
\[
\sum_{n \in \mathbb{N}} \sum_{\tau \in I_{n,s}^{\exists \sigma}} 2^{-|\tau|-n} \leq \sum_{n \in \mathbb{N}} 
2^{-n} \leq 2
\]
and therefore 
\[
\sum_{\langle \sigma, \tau, n \rangle \in I_s} 2^{-|\tau| -n} \leq 2.
\]
\end{enumerate}
\end{lem}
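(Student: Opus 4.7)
The plan is to derive all seven properties more or less directly from the definitions of $I$, $m_{i,n}$, and correctness at $s$; Property (4) contains the essential content while (1)--(3) and (5)--(7) follow as bookkeeping or immediate corollaries.

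Properties (1), (2), and the ``within-level'' antichain claim of (6) are built literally into the definition of $I$: $\langle\sigma,\tau,n,s\rangle\in I$ requires $\langle\sigma,n\rangle$ correct at $s$, and the $\tau$'s are chosen as minimum-length coverings. For the full antichain claim of (6) on $I_{n,s}^{\exists\sigma}$, I would note that for fixed $n$ and $s$ the strings correct at $s$ form a chain of initial segments $\mu$, and the corresponding ``telescope layers'' $(\bigcap_{j<|\mu|} X_{j,\mu(j)})^c \setminus (\bigcap_{j<|\mu|-1} X_{j,\mu(j)})^c$ are pairwise disjoint clopen sets; hence $\tau \in I_{\mu_1,n,s}$ and $\tau'\in I_{\mu_2,n,s}$ with $|\mu_1|\neq|\mu_2|$ lie in disjoint basic opens and are incomparable, while within a single $I_{\mu,n,s}$ they already form an antichain by construction. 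The Kraft inequality then gives the bound in (6), and $\sum_n 2^{-n} = 2$ yields (7), where the projection $\langle\sigma,\tau,n\rangle \mapsto \langle\tau,n\rangle$ on $I_s$ is injective by the same disjointness argument and so absorbs the outer sum over $\sigma$. For Property (3), observe that $I_{\sigma,n,s}$ depends only on the values $\sigma(i)$ and the $X_i$'s once $\langle\sigma,n\rangle$ is correct at $s$; so $I_{\sigma,n,s_0}=I_{\sigma,n,s}$ at every stage correct for $\langle\sigma,n\rangle$, and combined with (1) this gives $\mathbf{I_{\sigma,n}^\infty} = I_{\sigma,n,s_0}$. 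The characterization of $\mathbf{I^\infty}$ then follows from the eventual stability of the $m_{i,n}$-approximations.

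The heart of the lemma is Property (4). Fix $n,k$ and let $\sigma$ be the unique string of length $k$ with $\sigma(i)=m_{i,n}^\infty$; each prefix is then correct. The key identity is the telescoping
\[
\Bigl(\bigcap_{i<k} X_{i,\sigma(i)}\Bigr)^{\!c} = \bigcup_{i<k} [\mathbf{I_{\sigma\upharpoonright i,n}^\infty}],
\]
which holds because each $\mathbf{I_{\sigma\upharpoonright i,n}^\infty}$ antichain-covers exactly one layer of the telescope. For the measure estimate I use the halting rule for $m_{i,n}$: since $m_{i,n}^\infty = \sigma(i)$ is a stable value, the update condition $\mu(X_{i,t+1}\setminus X_{i,\sigma(i)}) \geq 2^{-n-i-1}$ must fail for every $t \geq \sigma(i)$, and letting $t \to \infty$ (the sets increase to $X_i\setminus X_{i,\sigma(i)}$) yields $\mu(X_i \setminus X_{i,\sigma(i)}) \leq 2^{-n-i-1}$. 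Since $\bigcap_{i<k} X_{i,\sigma(i)} \subseteq \bigcap_{i<k} X_i$, the identity $A^c \setminus B^c = B\setminus A$ (for $B\subseteq A$) together with subadditivity then bounds the excess by $\sum_{i<k} 2^{-n-i-1}$, giving (4). Property (5) follows by letting $k \to \infty$ and summing the geometric tail $\sum_{i\geq 0} 2^{-n-i-1} = 2^{-n}$. The main obstacle is the careful bookkeeping in (4): getting the off-by-one in the telescope indexing right (the $i=0$ layer being empty by convention) and converting the update-condition failure for $m_{i,n}$ into the concrete measure bound on $X_i\setminus X_{i,\sigma(i)}$. Once these are handled, everything else is essentially unwinding definitions.
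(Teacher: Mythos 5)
Your verification is correct and is exactly the routine unwinding of definitions that the paper intends (the paper gives no proof, asserting only that the properties are ``easily verified from the definitions''). You rightly isolate the two points that genuinely require an argument: the pairwise disjointness of the telescope layers across different correct prefixes, which yields both the antichain claim for $I_{n,s}^{\exists\sigma}$ in (6) and the injectivity of $\langle\sigma,\tau,n\rangle\mapsto\langle\tau,n\rangle$ needed for the second display in (7); and the conversion of the stabilization of $m_{i,n}$ at $m_{i,n}^{\infty}$ into the bound $\mu(X_i\smallsetminus X_{i,m_{i,n}^{\infty}})\leq 2^{-n-i-1}$, which drives the measure estimates in (4) and (5).
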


Using these ideas, we define the following $\Pi^0_3$ class $\mathbf{Z}$.  (We use boldface type for $\mathbf{Z}$ since it 
is introduced via a formula rather than a set code.)  
\[
\mathbf{Z} = \bigcap_{n \in \mathbb{N}} 
\mathop{\bigcup_{s \in \mathbb{N}}}_{\sigma \in \mathbb{N}^{< \mathbb{N}}}
\bigcap_{t \geq s} [I_{\sigma,n,t}] = 
\bigcap_{n \in \mathbb{N}} \bigcup_{\sigma \in \mathbf{C_n^{\infty}}} [\mathbf{I_{\sigma,n}^{\infty}}]
\]
To be clear, since this definition involves a class predicate, it is to be read in terms of the defining 
formulas.  That is 
\begin{gather*}
A \in \mathbf{Z} \, \Leftrightarrow \, \forall n \exists \sigma, s \forall t \geq s \exists \tau \in 
I_{\sigma,n,t} (A \in [\tau]) \\
\Leftrightarrow \, \forall n \exists \sigma ( \langle \sigma,n \rangle \, \text{is correct} \, \wedge 
\exists \tau \in \mathbf{I_{\sigma,n}^{\infty}} (A \in [\tau])).
\end{gather*}
Since $\exists \tau \in I_{\sigma,n,t}$ is a bounded quantifier, $\langle \sigma,n \rangle$ is correct is a 
$\Sigma^0_2$ statement, and $\exists \tau \in \mathbf{I_{\sigma,n}^{\infty}}$ is a $\Sigma^0_1$ 
statement, each of these equivalent definitions is $\Pi^0_3$.    

\begin{lem}
$\mathbf{Z}$ has the following properties.
\begin{enumerate}
\item[(1)] $X^c \subseteq \mathbf{Z}$.
\item[(2)] $\mu(\mathbf{Z} - X^c) = 0$.
\end{enumerate} 
\end{lem}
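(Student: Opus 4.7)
The plan is to verify both clauses directly from Lemma~\ref{lem:basicprops}, taking (1) from Property (4) and (2) from Property (5). Writing $Y_n = \bigcup_{\sigma \in \mathbf{C_n^{\infty}}} [\mathbf{I_{\sigma,n}^{\infty}}]$, so that $\mathbf{Z} = \bigcap_n Y_n$, the task reduces to showing $X^c \subseteq Y_n$ for every $n$ (giving clause (1)) and $\mu(Y_n - X^c) \leq 2^{-n}$ for every $n$ (giving clause (2)).

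For (1), I would fix $A \in X^c$, pick $i$ with $A \notin [X_i]$, and fix $n$. Taking $k = i+1$ and letting $\sigma$ be the unique length-$k$ element of $\mathbf{C_n^{\infty}}$ (guaranteed by Property (4)), one has $A \in X_i^c \subseteq (\bigcap_{j < k} X_j)^c$, so by the equality in Property (4), $A \in [\mathbf{I_{\sigma \upharpoonright j,n}^{\infty}}]$ for some $j < k$. Since each initial segment $\sigma \upharpoonright j$ lies in $\mathbf{C_n^{\infty}}$, this forces $A \in Y_n$; as $n$ is arbitrary, $A \in \mathbf{Z}$.

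For (2), Property (5) already delivers $\mu(Y_n - X^c) \leq 2^{-n}$. From $\mathbf{Z} \subseteq Y_n$ one gets $\mathbf{Z} - X^c \subseteq Y_n - X^c$, and the bound descends to $\mathbf{Z} - X^c$; its validity for all $n$ is what I would take as the meaning of $\mu(\mathbf{Z} - X^c) = 0$.

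The main delicacy---and what distinguishes this from the \ACA\ version of the argument---is parsing the measure inequality in clause (2) within \RCA, since $\mathbf{Z} - X^c$ is not a coded open, closed, $G_{\delta}$, nor $F_{\sigma}$ set in the sense of Section~\ref{sec:measuredef}. The right reading is via finite clopen approximations: the Kraft-style bound on the antichains $I_{n,s}^{\exists \sigma}$ from Property (6) together with the $2^{-n}$ overshoot underlying Property (5) give finite partial-sum estimates that formalize the bound stage by stage, without appealing to any convergence principle stronger than what \RCA\ provides.
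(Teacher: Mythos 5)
Your proposal is correct and follows essentially the same route as the paper: clause (1) comes from the containment $X^c \subseteq (\bigcap_{i<k} X_i)^c \subseteq \bigcup_{i<k}[\mathbf{I}^{\infty}_{\sigma\upharpoonright i,n}]$ supplied by Property (4) of Lemma~\ref{lem:basicprops} (your version just makes the element-chasing and the choice of $k=i+1$ explicit), and clause (2) is read off directly from the $2^{-n}$ bound in Property (5). The paper states the same conclusions without further comment on the \RCA{} reading of the measure equality, so your closing remark is a reasonable elaboration rather than a divergence.
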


\begin{proof}
To establish (1), for any fixed $n \in \mathbb{N}$, we have 
\[
X^c = \left(\bigcap_{i \in \mathbb{N}} X_i\right)^c \subseteq 
\left(\bigcap_{i \in \mathbb{N}} X_{i,m_{i,n}^{\infty}}\right)^c = \bigcup_{\sigma \in \mathbf{C_n^{\infty}}} 
[\mathbf{I_{\sigma,n}^{\infty}}]
\]
and therefore
\[
X^c \subseteq \bigcap_{n \in \mathbb{N}} \bigcup_{\sigma \in \mathbf{C_n^{\infty}}} 
[\mathbf{I_{\sigma,n}^{\infty}}] = \mathbf{Z}.
\]

To establish (2), for any fixed $n \in \mathbb{N}$, we have by Property (5) of Lemma \ref{lem:basicprops},  
\[
\mu\left( \bigcup_{\sigma \in \mathbf{C_n^{\infty}}} [\mathbf{I_{\sigma,n}^{\infty}}] - X^c\right) \leq 2^{-n}
\]
and therefore 
\[
\mu\left(\bigcap_{n \in \mathbb{N}} \bigcup_{\sigma \in \mathbf{C_n^{\infty}}} 
[\mathbf{I_{\sigma,n}^{\infty}}] - X^c\right) = 0.\qedhere
\]
\end{proof}

Now that we have a nicely approximated $\Pi^0_3$ superset 
$\mathbf{Z}$ of $X^c$ such that $\mu(\mathbf{Z}) = \mu(X^c)$, it remains to find a $\Pi^0_2$ superset 
$\mathbf{Y}$ of $\mathbf{Z}$ such that $\mu(\mathbf{Y}) = \mu(\mathbf{Z})$.  $\mathbf{Y}^c$ will be our desired 
$F_{\sigma}$ subset of $X$ of the same measure.

Fix a bijection between $\mathbb{N}$ and $\mathbb{N}^{< \mathbb{N}} \times 2^{< \mathbb{N}} \times \mathbb{N}$ 
and let $\langle \sigma_j, \tau_j, n_j \rangle$ denote the triple coded by $j$. Let $V_s$, $s\in\mathbb{N}$, be as in Lemma \ref{2009} for the function $f(\langle \sigma_j, \tau_j, n_j \rangle)=|\tau_j|+n_j$ and note that $V_s$, $s\in\mathbb{N}$, are defined by primitive recursion on $j$.  By Lemma \ref{2009}, for each $s$, 
$\mu([V_s]) = 2^{-|\tau_s|-n_s}$ and $\mu([V_s]^c) = 1-2^{-|\tau_s|-n_s}$.  Furthermore, because the $V_s$ sets are independent, 
if $K \subseteq \mathbb{N}$ is finite, then $\mu(\bigcap_{s \in K} [V_s]^c) = \prod_{s \in K} (1-2^{-|\tau_s| - n_s})$.

Next, we define the $G_{\delta}$ set (i.e., a $\Pi^0_2$ class) $P = \bigcap_{i \in \mathbb{N}} P_i$.  Fix a bijection between $\mathbb{N}$ and 
$\mathbb{N}^{< \mathbb{N}} \times 2^{< \mathbb{N}} \times \mathbb{N} \times \mathbb{N}$.  Let $\langle \sigma_i, \tau_i, n_i, s_i \rangle$ 
denote the tuple coded by $i$.  Define $P_i \subseteq 2^{< \mathbb{N}}$ as a union $P_i = \bigcup_{s \geq s_i} P_{i,s}$ of nested finite sets 
of strings as follows.  
If $\langle \sigma_i, \tau_i, n_i \rangle \notin I_{s_i}$, then $P_{i,s} = \{ \lambda \}$ for all $s \geq s_i$, where $\lambda$ denotes the empty string. So $P_i = \{ \lambda \}$ and 
$[P_i] = 2^{\mathbb{N}}$.  If $\langle \sigma_i, \tau_i, n_i \rangle \in I_{s_i}$, then set $P_{i,s_i} = $ a finite set of strings so that 
$[P_{i,s_i}] = [V_{\sigma_i, \tau_i, n_i}]^c$.  
For $t > s_i$, check to see if $\langle \sigma_i, \tau_i, n_i \rangle \in I_t$.  If so, then 
$P_{i,t} = P_{i,t-1} = P_{i,s_i}$.  If not, then at the first $t > s_i$ at which $\langle \sigma_i, \tau_i, n_i \rangle \notin I_t$, we extend $P_{i,t}$ 
(using strings of length $> t$) to a finite set of strings such that $[P_{i,t}] = 2^{\mathbb{N}}$, and for all  
$u > t$, we set $P_{i,u} = P_{i,t} = P_i$.  Note that for each $i$, 
either $P_{i,s} = P_{i,t}$ for all $s,t \geq s_i$ or there is a unique $t > s_i$ such that $P_{i,t} \neq P_{i,t-1}$.
 
\begin{lem}
$\forall j \exists u \forall i \leq j (P_{i,u} = P_i)$.
\end{lem}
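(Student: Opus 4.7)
The plan is to induct on $j$. The structural feature to exploit is that each sequence $\{P_{i,s}\}_{s\geq s_i}$ is nested under inclusion and changes value at most once: either it stays equal to $P_{i,s_i}$ (so $P_i = P_{i,s_i}$), or there is a unique first stage $t>s_i$ at which $\langle\sigma_i,\tau_i,n_i\rangle\notin I_t$, at which point $P_{i,t}$ strictly extends $P_{i,s_i}$ and thereafter stays fixed at $P_{i,t}=P_i$.

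The first key step is a persistence observation: since $P_{i,u}\subseteq P_{i,u'}\subseteq P_i$ whenever $s_i\leq u\leq u'$, if $P_{i,u}=P_i$ then $P_{i,u'}=P_i$ for every $u'\geq u$. This is the engine that lets us freely enlarge a witness.

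The second key step is the single-index version of the lemma: for each $i$, there is some $u$ with $P_{i,u}=P_i$. I would apply the law of excluded middle to the $\Sigma^0_1$ statement that some $t>s_i$ satisfies $\langle\sigma_i,\tau_i,n_i\rangle\notin I_t$. If this holds, the least such $t_i$ exists by $\Sigma^0_1$-minimization in \RCA, and $u=t_i$ works; if it fails, $P_{i,s}=P_{i,s_i}$ for all $s\geq s_i$, so $u=s_i$ works.

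The induction on $j$ is then routine: the base case is the single-index claim for $i=0$, and for the step, given $u_j$ witnessing the claim for $j$, pick $v$ with $P_{j+1,v}=P_{j+1}$ and set $u=\max(u_j,v)$, so that persistence yields $P_{i,u}=P_i$ for all $i\leq j+1$. I expect the only delicate point to be that the induction predicate is nominally $\Sigma^0_2$; however, persistence lets us combine finitely many per-index witnesses simply by taking their maximum, so the argument really only needs the single-index claim together with bounded collection of $\Sigma^0_1$ witnesses, both of which are available in \RCA.
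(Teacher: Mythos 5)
Your overall strategy (prove the single-index claim, then combine finitely many witnesses by taking a maximum) is workable and is genuinely different from the paper's proof, which argues by contradiction: if no stage $u$ works for some fixed $j$, then changes among the $P_{i,\cdot}$ with $i\leq j$ occur at arbitrarily late stages, so one can enumerate $j+2$ change-stages (the set of these exists by bounded $\Sigma^0_1$ comprehension) and the pigeonhole principle forces some single $i\leq j$ to change twice, contradicting the at-most-one-change property. However, your justification of the combination step has a gap, and it sits exactly where the difficulty of this lemma lives. The per-index statement ``$\exists u\,(P_{i,u}=P_i)$'' unwinds to $\exists u\,\forall t\geq u\,(P_{i,t}=P_{i,u})$, a $\Sigma^0_2$ statement with a $\Pi^0_1$ matrix; obtaining a common bound for its witnesses over all $i\leq j$ is an instance of $\Pi^0_1$ (equivalently, $\Sigma^0_2$) bounding, which $\RCA$ does not prove. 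This is precisely why the paper's proof ends with the warning that one cannot assume the existence of a function $g$ with $P_{i,g(i)}=P_i$ for all $i$: ``take the max of the per-index witnesses'' is not free here, and calling them ``$\Sigma^0_1$ witnesses'' mislabels their complexity.

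The repair is available but must be made explicit. In your Case A the witness is the least $t>s_i$ with $\langle\sigma_i,\tau_i,n_i\rangle\notin I_t$, and the formula ``$t>s_i\wedge\langle\sigma_i,\tau_i,n_i\rangle\notin I_t$'' is $\Delta^0_1$ (each $I_t$ is a uniformly computable finite set). So instead of collecting witnesses to ``$P_{i,u}=P_i$'', apply the strong $\Sigma^0_1$ bounding scheme (Simpson, Exercise II.3.14, which the paper itself invokes in its final lemma) to obtain a single $c$ such that for every $i\leq j$, if some $t>s_i$ has $\langle\sigma_i,\tau_i,n_i\rangle\notin I_t$, then some such $t\leq c$ exists. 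Then $u=\max(c,\max_{i\leq j}s_i)$ works, by your persistence observation together with the three-way case analysis: the triple is never in $I_{s_i}$ (so $P_{i,s}=\{\lambda\}$ throughout); the triple is in $I_{s_i}$ and never leaves $I_t$ (so $P_{i,s}=P_{i,s_i}$ throughout); or the triple is in $I_{s_i}$ and leaves by stage $c$ (so $P_{i,\cdot}$ has already made its unique change by stage $u$). With that substitution your argument goes through in $\RCA$. You should also drop the framing as induction on $j$, since the induction predicate is $\Sigma^0_2$ and $\RCA$ has only $\Sigma^0_1$ induction; the bounding argument replaces the induction entirely.
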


\begin{proof}
Suppose the lemma is false and fix $j$ such that for all stages $u$, there is an 
$i \leq j$ such that $P_{i,u} \neq P_i$.  In other words, for all $u$, there is an $i \leq j$ and a stage $t > u$ such that $P_{i,t} \neq P_{i,t-1}$.   
Let $m = \max \{ s_i \mid i \leq j \}$.  Define a one-to-one function $f: \mathbb{N} \rightarrow \mathbb{N}$ by $f(0) =$ the least $t$ such that 
$t > m$ and $\exists i \leq j (P_{i,t} \neq P_{i,t-1})$ and $f(n+1) =$ the least $t$ such that $t > f(n)$ and $\exists i \leq j (P_{i,t} \neq P_{i,t-1})$.  
By Bounded $\Sigma^0_1$ Comprehension, let $A = \{ t \mid \exists n \leq j+1 (f(n) = t) \}$.  Since $|A| = j+2$, there must be a value 
$i \leq j$ and stages $t_1, t_2 \in A$ with $t_1 \neq t_2$, $P_{i,t_1} \neq P_{i,t_1-1}$ and $P_{i,t_2} \neq P_{i,t_2-1}$.  These stages 
$t_1,t_2$ contradict the fact that there is at most one stage $t > s_i$ for which $P_{i,t} \neq P_{i,t-1}$, completing the proof of this lemma.  
(Note that despite this proof, we cannot assume the existence of a function $g$ such that for all $i$, $P_{i,g(i)} = P_i$.)
\end{proof}

\begin{lem}
\label{lem:Pform}
$\displaystyle P = \bigcap_{\langle \sigma, \tau, n \rangle \in \mathbf{I^{\infty}}} [V_{\sigma, \tau, n}]^c$.
\end{lem}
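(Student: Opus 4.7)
The plan is to prove the set equality by double inclusion, relying on a case analysis of how each $P_i$ looks based on the behavior of the triple $\langle \sigma_i, \tau_i, n_i \rangle$ at and after stage $s_i$. Reading the construction, exactly one of three scenarios occurs: (a) $\langle \sigma_i, \tau_i, n_i \rangle \notin I_{s_i}$, in which case $[P_i] = 2^{\mathbb{N}}$; (b) $\langle \sigma_i, \tau_i, n_i \rangle \in I_{s_i}$ and $\langle \sigma_i, \tau_i, n_i \rangle \in I_t$ for every $t \geq s_i$, in which case $[P_i] = [V_{\sigma_i, \tau_i, n_i}]^c$; or (c) $\langle \sigma_i, \tau_i, n_i \rangle \in I_{s_i}$ but $\langle \sigma_i, \tau_i, n_i \rangle \notin I_t$ at some first $t > s_i$, where the construction extends $P_{i,t}$ to all of $2^{\mathbb{N}}$, so again $[P_i] = 2^{\mathbb{N}}$. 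The key observation is that scenario~(b) is exactly the condition $\langle \sigma_i, \tau_i, n_i \rangle \in \mathbf{I}^{\infty}$ witnessed by $t = s_i$.

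For the inclusion $P \subseteq \bigcap_{\langle \sigma, \tau, n \rangle \in \mathbf{I}^{\infty}} [V_{\sigma, \tau, n}]^c$, I would fix $X \in P$ and an arbitrary triple $\langle \sigma, \tau, n \rangle \in \mathbf{I}^{\infty}$. By the definition of $\mathbf{I}^{\infty}$, choose $t_0$ such that $\langle \sigma, \tau, n, s \rangle \in I$ for every $s \geq t_0$, and let $i$ be the index assigned to the tuple $\langle \sigma, \tau, n, t_0 \rangle$ by the fixed bijection. Then $s_i = t_0$, so this $i$ witnesses scenario~(b), giving $[P_i] = [V_{\sigma, \tau, n}]^c$; since $X \in P \subseteq [P_i]$, we get $X \in [V_{\sigma, \tau, n}]^c$.

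For the reverse inclusion, I would take $X$ in the right-hand intersection and show $X \in [P_i]$ for every $i$. If index $i$ falls into scenario~(a) or~(c), then $[P_i] = 2^{\mathbb{N}}$ and $X \in [P_i]$ trivially. Otherwise we are in scenario~(b), so $\langle \sigma_i, \tau_i, n_i \rangle \in \mathbf{I}^{\infty}$ (witnessed by $s_i$) and $[P_i] = [V_{\sigma_i, \tau_i, n_i}]^c$; by hypothesis on $X$, $X \in [V_{\sigma_i, \tau_i, n_i}]^c = [P_i]$.

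The proof should mostly be careful bookkeeping: matching the clauses in the definition of $P_i$ with the cases (a)--(c), and noting the equivalence between scenario~(b) and membership in $\mathbf{I}^{\infty}$. The main subtlety to watch out for is that within $\WWKL$ we cannot uniformly decide which case holds for a given $i$, nor can we form the function sending $i$ to its stabilized $P_i$ (as the previous lemma warns); however, the statement is a pointwise set equality, so each case is analyzed individually using only the defining formulas, and no uniform selection across indices is required.
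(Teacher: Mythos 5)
Your proposal is correct and is essentially the paper's own argument: both proofs reduce to determining $[P_i]$ for each index $i$ via the case analysis built into the construction of $P_i$, observing that the only nontrivial factors $[V_{\sigma,\tau,n}]^c$ arise exactly from indices whose triple lies in $\mathbf{I^{\infty}}$ (and that every such triple is realized by some index, e.g.\ the one with $s_i$ equal to a stabilization stage). The only cosmetic difference is that you work directly from the definition of $\mathbf{I^{\infty}}$ and organize the argument as a double inclusion, whereas the paper routes the same bookkeeping through the correctness predicate and Property~(3) of Lemma~\ref{lem:basicprops}.
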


\begin{proof}
This lemma follows from two calculations.  
Consider a triple $\langle \sigma, \tau, n \rangle \in \mathbf{I^{\infty}}$.  By Property (3) of Lemma \ref{lem:basicprops}, $\langle 
\sigma,n \rangle$ is correct and $\tau \in \mathbf{I_{\sigma,n}^{\infty}}$.  Fix the least $s$ such that $\langle \sigma,n \rangle$ is correct at $s$, 
and hence $\langle \sigma,n \rangle$ is correct at every $t \geq s$.  Because $s$ is chosen least, for all $u < s$, $\langle \sigma, n \rangle$ is not 
correct at $u$ and hence for all $i$ of the form $\langle \sigma, \tau, n, u \rangle$ for $u < s$, we have $[P_i] = 2^{\mathbb{N}}$.  
On the other hand, because $\tau \in \mathbf{I_{\sigma,n}^{\infty}}$, $\langle 
\sigma, \tau, n \rangle \in I_t$ for all $t \geq s$.  Therefore, for all $i$ of the form $\langle \sigma, \tau, n,t \rangle$ for $t \geq s$, 
we have $[P_i] = [V_{\sigma, \tau, n}]^c$.  

Consider a triple $\langle \sigma, \tau, n \rangle \notin \mathbf{I^{\infty}}$.  Fix any $i$ of the form $\langle \sigma, \tau, n,s \rangle$.  
First, suppose that $\langle \sigma, n \rangle$ is not correct.  Then there is a $t \geq s$ such that $\langle \sigma, n \rangle$ is not 
correct at $t$.  By Property (1) of Lemma \ref{lem:basicprops}, $I_{\sigma,n,t} = \emptyset$, so $\langle \sigma, \tau, n,t \rangle \notin 
I$ and $[P_i] = 2^{\mathbb{N}}$.  On the other hand, suppose that $\langle \sigma, n \rangle$ is correct and fix $t \geq s$ such that 
$\langle \sigma, n \rangle$ is correct at $t$.  By Property (3) of Lemma \ref{lem:basicprops}, $\tau \notin \mathbf{I_{\sigma,n}^{\infty}}$ 
and hence $\tau \notin I_{\sigma,n,t}$ and $\langle \sigma,\tau,n \rangle \notin I_t$.  Therefore, $[P_i] = 2^{\mathbb{N}}$.  
\end{proof}

\begin{lem}
$\mu(P) > 0$.
\end{lem}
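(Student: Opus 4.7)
The plan is to bound $\mu(P)$ below by a positive rational constant by estimating the measures of the finite partial intersections $\bigcap_{i\le N} P_i$, using the mutual independence of the $[V_s]$ together with Property~(7) of Lemma~\ref{lem:basicprops} (which plays the role of the Kraft inequality in the proof of Lemma~\ref{lem:measure}).

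Fix $N$ and apply the preceding lemma to obtain a stage $u$ at which $P_{i,u}=P_i$ for every $i\le N$. From the construction of $P_{i,s}$, each such stabilized $P_i$ is either a trivial cover of $2^{\mathbb{N}}$ (when $\langle\sigma_i,\tau_i,n_i\rangle\notin I_u$) or is a finite code for $[V_{\sigma_i,\tau_i,n_i}]^c$; the latter case further forces $\langle\sigma_i,\tau_i,n_i\rangle\in I_t$ for every $t\ge s_i$, and hence $\langle\sigma_i,\tau_i,n_i\rangle\in I_u$. Collecting the \emph{distinct} triples that contribute nontrivially into a finite set $T\subseteq I_u$ yields
\[
\bigcap_{i\le N} P_i \;=\; \bigcap_{\langle\sigma,\tau,n\rangle\in T}[V_{\sigma,\tau,n}]^c,
\]
whose measure, by the mutual independence guaranteed by Lemma~\ref{2009}, equals $\prod_{\langle\sigma,\tau,n\rangle\in T}(1-2^{-|\tau|-n})$. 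Property~(7) of Lemma~\ref{lem:basicprops} gives $\sum_{\langle\sigma,\tau,n\rangle\in T}2^{-|\tau|-n}\le\sum_{\langle\sigma,\tau,n\rangle\in I_u}2^{-|\tau|-n}\le 2$, so Proposition~\ref{prop:product} supplies a uniform lower bound $\mu\bigl(\bigcap_{i\le N} P_i\bigr)\ge 1/81$ (after separately checking the degenerate triples with $|\tau|+n=0$, which must be ruled out using positivity of $\mu(X)$ since such a factor would otherwise be zero). Because $N$ was arbitrary and the $\RCA$-meaning of $\mu(P)\ge 1/81$ is precisely $\forall N\bigl(\mu(\bigcap_{i\le N} P_i)\ge 1/81\bigr)$, we conclude $\mu(P)\ge 1/81>0$.

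The main obstacle is the bookkeeping in the first step: one must carefully track which indices $i\le N$ produce which triples through the coding bijection and verify that the finite intersection collapses to a product over \emph{distinct} triples, all while working in $\RCA$ where $\mathbf{I^{\infty}}$ is not available as a set. A secondary technical point is checking the side condition $2^{-|\tau|-n}\le 1/2$ required by Proposition~\ref{prop:product}; the at-most-two triples with $|\tau|+n=0$ permitted by Property~(7) must be isolated and handled by hand, since an uncancelled factor of $0$ would collapse the entire bound.
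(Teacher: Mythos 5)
Your proof is correct and follows essentially the same route as the paper's: fix a finite level, use the stabilization lemma to get a stage $u$ with $P_{i,u}=P_i$, identify the nontrivial $P_i$ with a finite set of distinct triples in $I_u$, convert the measure of the intersection into a product via the independence from Lemma~\ref{2009}, and combine Property~(7) of Lemma~\ref{lem:basicprops} with Proposition~\ref{prop:product} to obtain the uniform lower bound $1/81$ (the paper merely phrases this as a contradiction with an arbitrarily small $\epsilon$ rather than stating the bound directly). Your flag about the side condition $2^{-|\tau|-n}\leq 1/2$ in Proposition~\ref{prop:product} is a legitimate edge case that the paper's own proof passes over silently, so raising it is a point in your favor rather than a gap.
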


\begin{proof}
We need to show that there is an $\epsilon \in \mathbb{Q}^{+}$ such that 
\[
\forall j \left(\mu\left( \bigcap_{i \leq j} P_i \right) \geq \epsilon\right). 
\]
We proceed by contradiction.  Suppose that for every $\epsilon > 0$, there is a $j$ such that 
$\mu(\bigcap_{i \leq j} P_i) < \epsilon$.  Fix an arbitrary $\epsilon$ and the corresponding $j$.  
Fix $u$ such that $P_{i,u} = P_i$ for all $i \leq j$.  As above, we assume $i = \langle \sigma_i, \tau_i, n_i, t_i \rangle$.  

For each $i \leq j$, $P_{i,u} = [V_{\sigma_i,\tau_i,n_i}]^c$ implies 
$\langle \sigma_i, \tau_i, n_i \rangle \in I_u \cap \mathbf{I^{\infty}}$, and $P_{i,u} \neq [V_{\sigma_i,\tau_i,n_i}]^c$ implies 
$P_{i,u} = 2^{\mathbb{N}}$.  Furthermore, because each 
$P_{i,u}$ is a finite set of strings, we can tell which of these cases applies.  Form the finite set 
\[
K = \{ \langle \sigma_i, \tau_i, n_i \rangle \mid i \leq j \wedge P_{i,u} = [V_{\sigma_i, \tau_i, n_i}]^c \} \subseteq I_u.
\]
Calculating measures, we have 
\[
\prod_{\langle \sigma_i, \tau_i, n_i \rangle \in K} (1 - 2^{-|\tau_i|-n_i})  = \mu\left(\bigcap_{i \leq j} P_{i,u}\right) = \mu\left(\bigcap_{i \leq j} P_i\right)
< \epsilon.
\]
Furthermore, we have
\[
\sum_{\langle \sigma_i, \tau_i, n_i \rangle \in K} 2^{-|\tau_i| - n_i} \leq 
\sum_{\langle \sigma,\tau,n \rangle \in I_u} 2^{-|\tau|-n} \leq 2.
\]
(The first inequality follows because $K \subseteq I_u$ and the second inequality follows from Property (7) of Lemma \ref{lem:basicprops}.)  
For a small enough value of $\epsilon$, the fact that $\prod_{\langle \sigma_i, \tau_i, n_i \rangle \in K} (1 - 2^{-|\tau_i|-n_i}) < \epsilon$ and 
$\sum_{\langle \sigma_i, \tau_i, n_i \rangle \in K} 2^{-|\tau_i| - n_i} \leq 2$ contradicts Proposition~\ref{prop:product}.  
\end{proof}

\begin{lem}
\label{lem:correct} 
For all $\sigma$, $\tau$ and $n$, $[V_{\sigma,\tau,n}] \cap P = \emptyset$ if and only if 
$\langle \sigma, n \rangle$ is correct and $\tau \in \mathbf{I_{\sigma,n}^{\infty}}$.  
\end{lem}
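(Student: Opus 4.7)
The plan is to treat the two directions separately. The reverse direction is immediate from Lemma \ref{lem:Pform} together with Property (3) of Lemma \ref{lem:basicprops}: if $\langle \sigma, n\rangle$ is correct and $\tau \in \mathbf{I_{\sigma,n}^{\infty}}$, then $\langle \sigma,\tau,n\rangle\in \mathbf{I^{\infty}}$, and the product description of $P$ gives $P \subseteq [V_{\sigma,\tau,n}]^c$.

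For the forward direction I would prove the contrapositive. Fix $\langle \sigma_0,\tau_0,n_0\rangle \notin \mathbf{I^{\infty}}$ and aim to produce a point of $[V_{\sigma_0,\tau_0,n_0}] \cap P$. The strategy is to re-run the positive-measure calculation from the preceding lemma, but carry the extra factor $[V_{\sigma_0,\tau_0,n_0}]$ through the intersection and exploit the mutual independence granted by Lemma \ref{2009}. Concretely, for an arbitrary $j$ I choose a stabilization stage $u$ with $P_{i,u} = P_i$ for all $i \leq j$ (this exists by the stabilization lemma) and form the same finite set
\[
K = \{\langle \sigma_i,\tau_i,n_i\rangle : i \leq j \wedge P_{i,u} = [V_{\sigma_i,\tau_i,n_i}]^c\} \subseteq I_u
\]
used in the previous proof. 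Assuming $\langle \sigma_0,\tau_0,n_0\rangle \notin K$ (the delicate point, addressed below), mutual independence gives
\[
\mu\Bigl([V_{\sigma_0,\tau_0,n_0}]\cap\bigcap_{i\leq j} P_i\Bigr)=2^{-|\tau_0|-n_0}\prod_{\langle \sigma,\tau,n\rangle\in K}(1-2^{-|\tau|-n})\geq 2^{-|\tau_0|-n_0}\cdot\epsilon,
\]
where $\epsilon>0$ is the rational lower bound on $\mu(\bigcap_{i\leq j}P_i)$ produced by the previous lemma (via Proposition \ref{prop:product}). Since this bound is uniform in $j$, the conventions of Section \ref{sec:measuredef} yield $\mu([V_{\sigma_0,\tau_0,n_0}]\cap P)\geq 2^{-|\tau_0|-n_0}\cdot\epsilon>0$. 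Noting that $[V_{\sigma_0,\tau_0,n_0}]\cap P$ is a $\Pi^0_1$ class (each $[P_i]$ is clopen, so $P$ is closed), $\WWKL$ then supplies a point of this class.

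The step I expect to require the most care is verifying $\langle \sigma_0,\tau_0,n_0\rangle \notin K$, and this is where the main obstacle lies. I would rerun the case analysis from the proof of Lemma \ref{lem:Pform}: because $\langle \sigma_0,\tau_0,n_0\rangle \notin \mathbf{I^{\infty}}$, each index $i \leq j$ of the form $\langle \sigma_0,\tau_0,n_0,s\rangle$ ends with $[P_i] = 2^{\mathbb{N}}$ (in both the subcase that $\langle \sigma_0,n_0\rangle$ is incorrect and the subcase that it is correct but $\tau_0 \notin \mathbf{I_{\sigma_0,n_0}^{\infty}}$), so $P_{i,u}=P_i$ is not $[V_{\sigma_0,\tau_0,n_0}]^c$, and the triple is excluded from $K$. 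I would want to be sure this case analysis uses only the finite data available in $I_u$ together with the $\Delta^0_2$ definition of $\mathbf{I^{\infty}}$, so that the entire argument stays within $\WWKL$ over $\RCA$.
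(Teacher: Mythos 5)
Your reverse direction is exactly the paper's. For the forward direction you take a genuinely different route, and it has a gap at the final step: you cannot apply $\WWKL$ to $[V_{\sigma_0,\tau_0,n_0}] \cap P$, because $P$ is available only as a code for a $G_{\delta}$ set, not as a $\Pi^0_1$ class. Although each $[P_i]$ is semantically clopen, the open code $P_i$ is built by stages and one cannot compute (in $\RCA$, or even in $\WWKL$) a stage by which $P_i$ has stabilized; consequently $X \in P$ is a genuinely $\Pi^0_2$ condition and there is no tree whose path set is $[V_{\sigma_0,\tau_0,n_0}] \cap P$. Extracting a point from a positive-measure $G_{\delta}$ set is precisely the principle $\Positive$, which the argument of Section \ref{sec:wwkl} is only entitled to invoke once (to obtain the closed set $Q \subseteq P$); using it here, or pretending $P$ is closed so that $\WWKL$ applies, is not available. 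Note also that the section announces that everything except Lemma \ref{lem:sigma01} is carried out in $\RCA$, so even a repaired compactness argument would be out of place. (Your verification that $\langle \sigma_0,\tau_0,n_0\rangle \notin K$ and the independence computation are fine as far as they go; the problem is only the passage from positive measure to an actual point.)

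The paper avoids all of this with an explicit, purely combinatorial witness. By the construction in Lemma \ref{2009}, $V_s = \{\rho\cat 0^{f(s)} : \rho \in 2^k\}$ where $k$ is the length of the longest string in $\bigcup_{t<s} V_t$. Taking $s = \langle \sigma,\tau,n\rangle$, the single sequence $X = 1^k 0^{f(s)} 1^{\N}$ lies in $[V_s]$ but in $[V_t]^c$ for every $t \neq s$; since $\langle \sigma,\tau,n\rangle \notin \mathbf{I^{\infty}}$ means $[V_{\sigma,\tau,n}]^c$ is not one of the sets intersected to form $P$ (Lemma \ref{lem:Pform}), this $X$ witnesses $[V_{\sigma,\tau,n}] \cap P \neq \emptyset$. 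If you want to salvage your approach, replace the measure-plus-compactness ending with such an explicit point; as written, the proof does not go through in the intended system.
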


\begin{proof}
Suppose that $\langle \sigma,n \rangle$ is correct and $\tau \in \mathbf{I_{\sigma,n}^{\infty}}$.  By Property (3) of Lemma \ref{lem:basicprops},  
$\langle \sigma, \tau, n \rangle \in \mathbf{I^{\infty}}$.  By Lemma \ref{lem:Pform}, $[V_{\sigma,\tau,n}]^c$ is one of the intersected 
sets forming $P$ and therefore $[V_{\sigma,\tau,n}] \cap P = \emptyset$.  

Now assume that it is not the case that $\langle \sigma, n \rangle$ is correct and $\tau \in \mathbf{I_{\sigma,n}^{\infty}}$. Again by Property (3) of Lemma~\ref{lem:basicprops}, we have $\langle \sigma, \tau, n \rangle \notin \mathbf{I^{\infty}}$. So $[V_{\sigma,\tau,n}]^c$ does not occur in the intersection forming $P$. Let $s=\langle \sigma,\tau,n \rangle$. Recall how the sets $V_t$ were formed in Lemma~\ref{2009}. Let $k$ be the length of the longest string in $\bigcup_{t<s} V_t$. Consider the sequence $X = 1^k0^{f(s)}1^\N$. It follows from the construction of the sets $V_t$, $t\in\N$, that $X\in [V_s]$ but $X\in [V_t]^c$ for every $t\neq s$. Therefore, $X\in [V_{\sigma,\tau,n}] \cap P$, so $[V_{\sigma,\tau,n}] \cap P\neq \emptyset$.
\end{proof}

By Lemma \ref{lem:correct}, we can write $\mathbf{Z}$ as 
\[
A \in \mathbf{Z} \Leftrightarrow \forall n \exists \sigma, \tau ( [V_{\sigma,\tau,n}] \cap P = \emptyset 
\wedge A \in [\tau]).
\]

By $\Positive$, we can fix a closed set $Q \subseteq P$ such that $\mu(Q)>0$. Following the proof of Lemma~\ref{lem:measure}, it would make sense to define $\mathbf{J}$ to be the class containing all triples $\langle \sigma, \tau, n \rangle$ such that $[V_{\sigma,\tau,n}] \cap Q = \emptyset$. The problem is that without $\WKL$, this would not necessarily be a  $\Sigma^0_1$ condition. Since we want to work in $\WWKL$, we need a slightly different definition of $\mathbf{J}$. Take $k\in\N$ such that $\mu(Q)>2^{-k}$. Let
\[
\mathbf{J} = \{ \langle \sigma, \tau, n \rangle \mid \mu([V_{\sigma,\tau,n}] \cap Q) < 2^{-\langle \sigma, \tau, n \rangle-k-2} \}.
\]
In Section~\ref{sec:measuredef} we saw that if $O$ is an open set and $q\in\Q$, then $\mu(O)>q$ is a $\Sigma^0_1$ statement. Thus, $\mathbf{J}$ is a $\Sigma^0_1$ class.

\begin{lem}
\label{lem:sigma01}
If $[V_{\sigma,\tau,n}] \cap Q = \emptyset$, then $\langle \sigma, \tau, n \rangle\in\mathbf{J}$.
\end{lem}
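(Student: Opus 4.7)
The plan is to use $\WWKL$ to convert the set-theoretic hypothesis $[V_{\sigma,\tau,n}] \cap Q = \emptyset$ into the $\Sigma^0_1$ measure inequality that defines $\mathbf{J}$. Since $Q$ is a closed set, we may write $Q = [O]^c$ for some code $O$ for an open set. The clopen complement of $[V_{\sigma,\tau,n}]$ is generated by a finite (computable) set of strings $W$, and the closed set $[V_{\sigma,\tau,n}] \cap Q$ has open complement coded by $R = W \cup O$, so that $[R] = [V_{\sigma,\tau,n}]^c \cup [O]$. The hypothesis is then equivalent to the statement $[R] = 2^{\mathbb{N}}$.

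The key step invokes $\WWKL$ in the form: if $R$ codes an open set with $[R] = 2^{\mathbb{N}}$, then for every rational $q < 1$ there is a stage $t$ such that $\mu(R_t) > q$. This is the standard equivalent of $\WWKL$ asserting that a tree in $2^{<\mathbb{N}}$ with no infinite path has limiting measure $0$. Applying this with $q = 1 - 2^{-\langle \sigma,\tau,n \rangle - k - 2}$ and unpacking the definitions of Section~\ref{sec:measuredef}, we obtain $\mu([V_{\sigma,\tau,n}] \cap Q) < 2^{-\langle \sigma,\tau,n\rangle - k - 2}$, which is exactly what is required for $\langle \sigma,\tau,n \rangle \in \mathbf{J}$.

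There is no significant obstacle here once $\WWKL$ is in hand; the only care needed is in matching the measure-theoretic definitions of Section~\ref{sec:measuredef} (where $\mu([V_{\sigma,\tau,n}] \cap Q) < r$ is interpreted via the open code $R$ of its complement, as the $\Sigma^0_1$ statement $\mu([R]) > 1 - r$). In particular, the required inequality is $\Sigma^0_1$, which is why $\WWKL$ suffices and $\WKL$ is not needed.
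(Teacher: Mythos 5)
Your proof is correct and is essentially the paper's argument run in the other direction: the paper proves the contrapositive (if $\langle\sigma,\tau,n\rangle\notin\mathbf{J}$ then $\mu([V_{\sigma,\tau,n}]\cap Q)>0$, so $\WWKL$ gives a point in the intersection), while you apply the equivalent contrapositive form of $\WWKL$ (a pathless tree has vanishing measure) directly to the open cover $R=W\cup O$. Both are a single application of $\WWKL$ to the same closed set, so this is the same proof up to contraposition, with your version spelling out the coding details of Section~\ref{sec:measuredef} more explicitly.
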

\begin{proof}
This follows from $\WWKL$ and it is our only use of the principle. If $\langle \sigma, \tau, n \rangle\notin\mathbf{J}$, then $\mu([V_{\sigma,\tau,n}] \cap Q)>0$. But then $\WWKL$ implies that $[V_{\sigma,\tau,n}] \cap Q \neq \emptyset$.
\end{proof}

\begin{lem}
\label{lem:bounded}
The sum $\sum_{\langle \sigma,\tau,n \rangle \in \mathbf{J}} 2^{-|\tau|-n}$ is bounded above.
\end{lem}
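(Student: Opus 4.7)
The plan is to bound the partial sums uniformly: I will exhibit a single rational $q$, depending only on $k$, such that $\sum_{s\in K} 2^{-|\tau_s|-n_s}\leq q$ for every finite $K\subseteq\mathbf{J}$; by the definition in Section~\ref{sec:log}, this is exactly what it means for the sum to be \emph{bounded above}. The key leverage is the geometric slack $2^{-\langle\sigma,\tau,n\rangle-k-2}$ built into the definition of $\mathbf{J}$, whose total over all codes is at most $2^{-k-1}$; a union bound then prevents any finite union of $[V_s]$'s with $s\in\mathbf{J}$ from swallowing more than half of $Q$.

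Concretely, I would first estimate, for an arbitrary finite $K\subseteq\mathbf{J}$,
\[
\mu\!\left(Q\cap\bigcup_{s\in K}[V_s]\right)\leq \sum_{s\in K}\mu([V_s]\cap Q)<\sum_{s\in K} 2^{-s-k-2}\leq 2^{-k-1}
\]
directly from the defining inequality of $\mathbf{J}$, which combined with $\mu(Q)>2^{-k}$ forces
\[
\mu\!\left(\bigcap_{s\in K}[V_s]^c\right)\geq \mu\!\left(Q\cap\bigcap_{s\in K}[V_s]^c\right)>2^{-k-1}.
\]
Next I would invoke the mutual independence of the $[V_s]$'s guaranteed by Lemma~\ref{2009} to rewrite the left-hand side as $\prod_{s\in K}(1-2^{-|\tau_s|-n_s})$, so this product is uniformly bounded below by $2^{-k-1}$. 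Finally, applying Lemma~\ref{lem:lower} term-by-term and then part (4) of Lemma~\ref{lem:logprod} converts the product bound into a sum bound:
\[
\sum_{s\in K} 2^{-|\tau_s|-n_s}\leq \sum_{s\in K}|\ln(1-2^{-|\tau_s|-n_s})|=-\ln\!\prod_{s\in K}(1-2^{-|\tau_s|-n_s})\leq (k+1)\ln 2,
\]
which is independent of $K$, so any fixed rational exceeding $(k+1)\ln 2$ serves as the desired witness.

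I do not anticipate a serious obstacle: the argument is the finitary, $\RCA$-level version of one direction of Proposition~\ref{prop:converge}, and the slack in $\mathbf{J}$ provides exactly enough room to survive the relaxation of ``$[V_s]\cap Q=\emptyset$'' (the classical condition from Lemma~\ref{lem:measure}) to ``$\mu([V_s]\cap Q)<2^{-s-k-2}$''. Notably, $\WWKL$ is not used in this lemma itself; it was invoked only in Lemma~\ref{lem:sigma01} to make $\mathbf{J}$ a $\Sigma^0_1$ class, and from here on $\RCA$ suffices.
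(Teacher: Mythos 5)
Your proposal is correct and follows essentially the same route as the paper: exploit the slack $2^{-\langle\sigma,\tau,n\rangle-k-2}$ (summing to at most $2^{-k-1}$) to show the $[V_s]$ with $s\in\mathbf{J}$ cannot cover more than half of $Q$, use independence to identify $\mu\bigl(\bigcap_{s\in K}[V_s]^c\bigr)$ with the product $\prod_{s\in K}(1-2^{-|\tau_s|-n_s})$, and convert the lower bound on the product into an upper bound on the sum via the logarithm lemmas. The only differences are cosmetic: the paper packages the union bound as an explicitly enumerated open set $R$ (built from clopen stage approximations $[V_{\sigma,\tau,n}]\cap Q_s$, which is slightly cleaner for the $\RCA$ measure formalism) and then cites Proposition~\ref{prop:converge}, whereas you do the union bound directly on finite subsets and inline the relevant direction of that proposition with the explicit constant $(k+1)\ln 2$.
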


\begin{proof}
Because $\mathbf{J}$ is a $\Sigma^0_1$ class, this sum can be expressed as $\sum a_i$ where the sequence 
$a_i \in \mathbb{Q}$ is determined by the enumeration of $\mathbf{J}$.  That is, $a_i = 2^{-|\tau|-n}$ if 
the $i$-th element enumerated into $\mathbf{J}$ is $\langle \sigma, \tau, n \rangle$.  (Recall that we think of a $\Sigma^0_1$ class such as $\mathbf{J}$ enumerated in stages with $\mathbf{J}_s$ equal to the finite set of tuples 
$\langle \sigma, \tau, n \rangle < s$ which are in $\mathbf{J}$ with an existential witness $< s$.)

We define an open set $R$ as follows. At the stage $s$ when $\langle \sigma, \tau, n \rangle$ goes into $\mathbf{J}$, we have $\mu([V_{\sigma,\tau,n}] \cap Q_s) < 2^{-\langle \sigma, \tau, n \rangle-k-2}$. Enumerate the clopen set $[V_{\sigma,\tau,n}] \cap Q_s$ into $R$. Note that $\mu(R)\leq \sum_{\langle \sigma, \tau, n \rangle\in\mathbf{J}} 2^{-\langle \sigma, \tau, n \rangle-k-2}\leq 2^{-k-1}$. Also note that if $\langle \sigma, \tau, n \rangle\in\mathbf{J}$, then $[V_{\sigma,\tau,n}]\subseteq R\cup Q^c$. Therefore, $Q-R\subseteq\bigcap_{\langle \sigma,\tau,n \rangle \in \mathbf{J}_s} [V_{\sigma,\tau,n}]^c$.

For any $s \in \mathbb{N}$, we have 
\begin{multline*}
\prod_{\langle \sigma,\tau,n \rangle \in \mathbf{J}_s} (1-2^{-|\tau|-n}) = \mu\left(\bigcap_{\langle \sigma,\tau,n \rangle \in \mathbf{J}_s} [V_{\sigma,\tau,n}]^c\right) \geq \mu(Q-R) \\
\geq \mu(Q) - \mu(R) > 2^{-k} - 2^{-k-1} = 2^{-k-1} > 0.
\end{multline*}
and therefore the product $\prod_{\langle \sigma, \tau, n \rangle \in \mathbf{J}} (1-2^{-|\tau|-n})$ is bounded away from $0$. Hence, by Proposition \ref{prop:converge}, $\sum_{\langle \sigma,\tau,n \rangle \in \mathbf{J}} 2^{-|\tau|-n}$ is bounded above.
\end{proof}

To approximate the defining condition for 
$\mathbf{Z}$ given immediately after Lemma \ref{lem:correct}, we look at the $\Sigma^0_1$ predicate 
\[
\langle \sigma,\tau,n \rangle \in \mathbf{J} \wedge \exists t \geq s (\langle \sigma, \tau, n \rangle \in I_t).
\]
Define 
\begin{gather*}
\mathbf{T_{n,s}} = \{ \langle \sigma, \tau, n \rangle \mid \langle \sigma,\tau,n \rangle \in \mathbf{J} \wedge 
\exists t \geq s (\langle \sigma, \tau, n \rangle \in I_t) \}, \\
\mathbf{U_{n,s}} = \{ \tau \mid \exists \sigma (\langle \sigma, \tau, n \rangle \in T_{n,s}) \}.
\end{gather*}
Note that $\mathbf{T_{n,s}}$ and $\mathbf{U_{n,s}}$ are $\Sigma^0_1$ classes and for any fixed $n$, we have 
\begin{align*}
& \mathbf{T_{n,0}} \supseteq \mathbf{T_{n,1}} \supseteq \mathbf{T_{n,2}} \supseteq \cdots, \\
\text{and } & \mathbf{U_{n,0}} \supseteq \mathbf{U_{n,1}} \supseteq \mathbf{U_{n,2}} \supseteq \cdots.
\end{align*}
We finally define our desired $\Pi^0_2$ class $\mathbf{Y}$
\[
\mathbf{Y} = \bigcap_{n \in \mathbb{N}} \bigcap_{s \in \mathbb{N}} [\mathbf{U_{n,s}}].
\]

\begin{lem}
\label{lem:contains}
$\mathbf{Z} \subseteq \mathbf{Y}$.
\end{lem}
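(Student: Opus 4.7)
The plan is to unfold the definitions of $\mathbf{Z}$ and $\mathbf{Y}$ and verify inclusion pointwise. Fix $A \in \mathbf{Z}$ and fix an arbitrary pair $n, s \in \mathbb{N}$; it suffices to show that $A \in [\mathbf{U_{n,s}}]$, that is, to produce $\tau$ with $A \in [\tau]$ and some $\sigma$ with $\langle \sigma, \tau, n \rangle \in \mathbf{T_{n,s}}$.

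The characterization of $\mathbf{Z}$ obtained just after Lemma~\ref{lem:correct} gives, for this $n$, a pair $\sigma, \tau$ such that $[V_{\sigma,\tau,n}] \cap P = \emptyset$ and $A \in [\tau]$. Two consequences follow. First, because $Q \subseteq P$, also $[V_{\sigma,\tau,n}] \cap Q = \emptyset$, and Lemma~\ref{lem:sigma01} then gives $\langle \sigma, \tau, n \rangle \in \mathbf{J}$. Second, the forward direction of Lemma~\ref{lem:correct} says that $\langle \sigma, n \rangle$ is correct and $\tau \in \mathbf{I_{\sigma,n}^{\infty}}$, so Property~(3) of Lemma~\ref{lem:basicprops} gives $\langle \sigma, \tau, n \rangle \in \mathbf{I^{\infty}}$. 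By the definition of $\mathbf{I^{\infty}}$, this means $\langle \sigma, \tau, n \rangle \in I_t$ for all sufficiently large $t$, and in particular for some $t \geq s$.

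Combining these two consequences, $\langle \sigma, \tau, n \rangle \in \mathbf{J}$ and $\exists t \geq s (\langle \sigma, \tau, n \rangle \in I_t)$, which is exactly the condition $\langle \sigma, \tau, n \rangle \in \mathbf{T_{n,s}}$. Hence $\tau \in \mathbf{U_{n,s}}$ and so $A \in [\tau] \subseteq [\mathbf{U_{n,s}}]$. Since $n$ and $s$ were arbitrary, $A \in \mathbf{Y}$.

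There is no real obstacle here: the lemma is essentially a bookkeeping step that lines up the witnesses used in the definition of $\mathbf{Z}$ with the weaker witnesses required by the $\Sigma^0_1$-approximation $\mathbf{T_{n,s}}$. The only subtlety is making sure that passing from $P$ to $Q$ does not break anything, which is handled by Lemma~\ref{lem:sigma01} (the one place in the whole argument where $\WWKL$, rather than $\RCA$, is needed), and that the ``$\exists t \geq s$'' requirement is absorbed by the fact that correctness of $\langle \sigma, n \rangle$ together with $\tau \in \mathbf{I_{\sigma,n}^{\infty}}$ implies $\langle \sigma, \tau, n \rangle$ is in $I_t$ cofinitely.
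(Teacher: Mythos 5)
Your proof is correct and follows essentially the same route as the paper's: unfold the definition of $\mathbf{Z}$ via Lemma~\ref{lem:correct}, pass from $P$ to $Q$ to get membership in $\mathbf{J}$ (the paper does this implicitly where you correctly cite Lemma~\ref{lem:sigma01}), and use correctness of $\langle \sigma,n\rangle$ together with $\tau \in \mathbf{I_{\sigma,n}^{\infty}}$ to supply the witness $t \geq s$ for membership in $\mathbf{T_{n,s}}$. No gaps.
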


\begin{proof}
Let $A \in \mathbf{Z}$ and fix any $n$.  We show that $A \in \bigcap_{s \in \mathbb{N}} [\mathbf{U_{n,s}}]$. 
Since $A \in \mathbf{Z}$, there are strings $\sigma$ and $\tau$ such that $[V_{\sigma,\tau,n}] 
\cap P = \emptyset$ and $A \in [\tau]$.  Since $Q \subseteq P$, we have 
$[V_{\sigma,\tau,n}] \cap Q = \emptyset$, so $\langle \sigma,\tau,n \rangle \in \mathbf{J}$. By Lemma \ref{lem:correct}, we have that 
$\langle \sigma,n \rangle$ is correct and $\tau \in \mathbf{I_{\sigma,n}^{\infty}}$.  Therefore, 
for all $s$, there is $t \geq s$ such that $\langle \sigma,\tau,n \rangle \in I_t$.  (In fact, this is 
true for almost all $t \geq s$.)  It follows that for all $s$, 
\[
\langle \sigma,\tau,n \rangle \in \mathbf{J} \wedge \exists t \geq s (\langle \sigma,\tau,n \rangle \in 
I_t),
\]
and hence that $\langle \sigma,\tau,n \rangle \in \mathbf{T_{n,s}}$ and $\tau \in \mathbf{U_{n,s}}$ for all $s$.  Since $A \in [\tau]$, we have that 
$A \in \bigcap_{s \in \mathbb{N}} [\mathbf{U_{n,s}}]$ as required. 
\end{proof}

\begin{lem}
$\mu(\mathbf{Y} - \mathbf{Z}) = 0$.
\end{lem}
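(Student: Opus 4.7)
The plan is to prove $\mu(\mathbf{Y}) \leq \mu(\mathbf{Z})$; combined with $\mathbf{Z}\subseteq \mathbf{Y}$ from Lemma~\ref{lem:contains}, this gives $\mu(\mathbf{Y}-\mathbf{Z})=0$. For each $n$, set $\mathbf{Z_n} := \bigcup_{\sigma \in \mathbf{C_n^\infty}}[\mathbf{I_{\sigma,n}^\infty}]$ and $\mathbf{Y_n} := \bigcap_s [\mathbf{U_{n,s}}]$, so $\mathbf{Y}\subseteq \mathbf{Y_n}$. Property~(5) of Lemma~\ref{lem:basicprops} already gives $\mu(\mathbf{Z_n}) \leq \mu(X^c)+2^{-n}$, so it is enough to show $\mu(\mathbf{Y_n}) \leq \mu(\mathbf{Z_n})$ for each $n$; letting $n\to\infty$ then yields $\mu(\mathbf{Y})\leq \mu(X^c)=\mu(\mathbf{Z})$.

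The core idea is a dichotomy applied to any witness triple $\langle\sigma,\tau,n\rangle \in \mathbf{T_{n,s}}$ for a string $\tau\in \mathbf{U_{n,s}}$. If $\langle\sigma,\tau,n\rangle\in\mathbf{I^{\infty}}$, then by Property~(3) of Lemma~\ref{lem:basicprops} $\langle\sigma,n\rangle$ is correct and $\tau\in\mathbf{I_{\sigma,n}^\infty}$, so $[\tau]\subseteq\mathbf{Z_n}$. Otherwise the triple has a last stage $t^{\ast}$ at which it lies in $I_{t^{\ast}}$, and once $s>t^{\ast}$ it is gone from $\mathbf{T_{n,s}}$ forever. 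Hence $[\mathbf{U_{n,s}}]\setminus \mathbf{Z_n}$ is covered by those $[\tau]$ whose witnesses lie in $\mathbf{J}\setminus\mathbf{I^{\infty}}$ and remain active at stage $s$. To estimate the total weight of this cover I would invoke Lemma~\ref{lem:bounded}: given $\epsilon>0$, choose a finite initial segment of the enumeration of $\mathbf{J}$ (for our fixed $n$) whose complementary tail contributes weight less than $\epsilon$, and then choose $s$ past the last-active stage of each triple in that initial segment which fails to lie in $\mathbf{I^{\infty}}$. This produces $\mu([\mathbf{U_{n,s}}]\setminus\mathbf{Z_n})<\epsilon$, hence $\mu(\mathbf{Y_n})\leq \mu(\mathbf{Z_n})$.

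The main obstacle is that $\mathbf{I^{\infty}}$ is $\Delta^0_2$ rather than arithmetic and cannot be assembled into a set inside $\WWKL$, so we cannot simply sieve $\mathbf{J}$ by membership in $\mathbf{I^{\infty}}$. The workaround is that at each approximation step we work only with a finite initial segment of $\mathbf{J}$, for which the existence of a common time-out stage $s$ is a $\Sigma^0_2$ fact about a finite set and so can be asserted and used as a parameter. The resulting bound on $\mu([\mathbf{U_{n,s}}])$ is a $\Pi^0_1$ measure statement, which is precisely the form in which the framework of Section~\ref{sec:measuredef} allows us to compare $\mu(\mathbf{Y})$ with $\mu(\mathbf{Z})$, closing the argument within $\WWKL$.
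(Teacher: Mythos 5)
Your proof is correct and follows essentially the same route as the paper's: reduce to each level $n$, use Lemma~\ref{lem:bounded} to make the tail of $\mathbf{J}$ contribute less than $\epsilon$, and choose a stage past which the finitely many small-index triples outside $\mathbf{I^{\infty}}$ have permanently left $\mathbf{T_{n,s}}$, so that $\mu([\mathbf{U_{n,s}}]\smallsetminus\mathbf{Z_n})<\epsilon$. The only detail you leave implicit---extracting a single common time-out stage for the finite initial segment inside the weak base theory---is exactly what the paper handles via the strong $\Sigma^0_1$ bounding scheme.
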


\begin{proof}
For $k \in \mathbb{N}$ we let 
\begin{gather*}
\mathbf{Z_k} = \bigcup_{\sigma \in \mathbf{C_k^{\infty}}} 
[\mathbf{I_{\sigma,k}^{\infty}}], \\
\mathbf{Y_k} = \bigcap_{s \in \mathbb{N}} [\mathbf{U_{k,s}}].
\end{gather*}
The proof of Lemma \ref{lem:contains} shows that $\mathbf{Z_k} \subseteq \mathbf{Y_k}$.  Since $\mathbf{Z} = \bigcap_k \mathbf{Z_k}$ 
and $\mathbf{Y} = \bigcap_k \mathbf{Y_k}$, it suffices to show that $\mu(\mathbf{Y_k} - \mathbf{Z_k}) = 0$.  To prove this measure 
statement, we need to prove that for every $\epsilon \in \mathbb{Q}^{+}$, there is a $c$ such that 
$\mu(\mathbf{U_{k,c}} - \mathbf{Z_k}) < \epsilon$.  

Fix $k \in \mathbb{N}$ and $\epsilon \in \mathbb{Q}^{+}$.  By Lemma \ref{lem:bounded}, fix $m$ such that 
\[
\mathop{\sum_{\langle \sigma,\tau,n \rangle \in \mathbf{J}}}_{\langle \sigma,\tau,n \rangle \geq m}
2^{-|\tau|-n} < \epsilon \cdot 2^{-k}.
\]
(In this sum, $\sigma$, $\tau$ and $n$ vary.)  Fixing $n=k$ in this summation and multiplying by $2^k$, we have (now letting only 
$\sigma$ and $\tau$ vary)
\[
\mathop{\sum_{\langle \sigma,\tau,k \rangle \in \mathbf{J}}}_{\langle \sigma,\tau,k \rangle \geq m}
2^{-|\tau|} < \epsilon.
\]
For each tuple $\langle \sigma, \tau, k \rangle \in \mathbf{T_{k,0}}$ such that $\langle \sigma,\tau,k \rangle 
\notin \mathbf{I^{\infty}}$, there must be an $c$ such that for all $u \geq c$, $\langle \sigma, \tau, k \rangle \notin I_u$, and 
hence $\langle \sigma, \tau, k \rangle \notin \mathbf{T_{k,c}}$.  
We would like to obtain a single witness $c$ which works for all such $\langle \sigma,\tau,k \rangle < m$.  

Consider the bounded quantifier statement $\varphi(\sigma,\tau,k,u)$ which says that $u$ is a witness for 
$\langle \sigma,\tau,n \rangle \in \mathbf{J}$, that $\exists t \leq u ( \langle \sigma,\tau,k \rangle \in I_t)$, and that 
$\langle \sigma,\tau,k \rangle \notin I_u$.  Fix any $\langle \sigma,\tau,k \rangle$ such that 
$\exists u \, \varphi(\sigma,\tau,k,u)$, fix the witness $u$ for this statement and fix $t \leq u$ that witnesses the second 
conjunct of $\varphi$.  Because $\langle \sigma,\tau,n \rangle \in \mathbf{J}$ and $\langle \sigma, \tau, k \rangle \in I_t$, 
we have that $\langle \sigma, \tau, k \rangle \in \mathbf{T_{k,0}}$.  Because $\langle \sigma, \tau, k \rangle \notin I_u$ and $t < u$, 
we have that $\forall v \geq u (\langle \sigma, \tau, k \rangle \notin I_v)$ and hence $\langle \sigma, \tau, k \rangle \notin \mathbf{T_{k,u}}$.  Furthermore, by the previous paragraph, if $\langle \sigma, \tau, k \rangle \in \mathbf{T_{k,0}}$ and $\langle 
\sigma, \tau, k \rangle \notin \mathbf{I^{\infty}}$, then $\exists u \, \varphi(\sigma, \tau, k, u)$.  

The strong $\Sigma^0_1$ bounding scheme (which holds in $\RCA$, see Simpson \cite{si:09} Exercise II.3.14) 
implies that 
\[
\exists c \, \forall \langle \sigma, \tau, k \rangle \leq m \, ( \exists u \, \varphi(\sigma, \tau, k, u) \rightarrow \exists u \leq c \,  
\varphi(\sigma, \tau, k, u)).
\]
Fix such a $c$.  For any $\langle \sigma, \tau, k \rangle < m$, if $\langle \sigma, \tau, k \rangle \in \mathbf{T_{k,0}}$ and 
$\langle \sigma, \tau, k \rangle \notin \mathbf{I^{\infty}}$, then $\langle \sigma, \tau, k \rangle \notin \mathbf{T_{k,c}}$.  

To finish the proof, it suffices to show that 
\[
\mu(\mathbf{U_{k,c}} - \mathbf{Z_k}) \leq 
\mathop{\sum_{\langle \sigma,\tau,k \rangle \in \mathbf{J}}}_{\langle \sigma,\tau,k \rangle \geq m}
2^{-|\tau|} < \epsilon.
\]
Suppose that $\tau \in \mathbf{U_{k,c}}$ but $\tau \notin \mathbf{Z_k}$ (that is, $\tau \notin \mathbf{I_{\sigma,k}^{\infty}}$ for any 
$\sigma \in \mathbf{C_k^{\infty}}$).  Fix $\sigma$ such that $\langle \sigma, \tau, k \rangle \in \mathbf{T_{k,c}}$.  We need to show that 
$\langle \sigma, \tau, k \rangle \geq m$.  $\langle \sigma, \tau, k \rangle \in \mathbf{T_{k,c}}$ implies that 
$\exists t \geq c (\langle \sigma,\tau,k \rangle \in I_t)$ and hence 
$\tau \in \mathbf{I_{\sigma,k}^{\infty}}$.  Since $\tau \notin \mathbf{Z}$, $\langle \sigma,k \rangle$ must not 
be correct and hence $\langle \sigma, \tau, k \rangle \notin \mathbf{I^{\infty}}$ by Property (3) of Lemma \ref{lem:basicprops}.  
Suppose for a contradiction that $\langle \sigma, \tau, k \rangle < m$.  Since $\langle \sigma, \tau, k \rangle \in 
\mathbf{T_{k,c}} \subseteq \mathbf{T_{k,0}}$ and $\langle \sigma, \tau, k \rangle \notin \mathbf{I^{\infty}}$, we have 
(by our choice of $c$) that $\langle \sigma, \tau, k \rangle \notin \mathbf{T_{k,c}}$, which is the desired contradiction.  
\end{proof}

\bibliographystyle{plain}
\bibliography{domination}

\end{document}